\documentclass[11pt]{amsart}
\usepackage{a4wide}
\usepackage{amsmath} \usepackage{amsthm} \usepackage{amscd}
\usepackage{amssymb} \usepackage{graphicx} 
\usepackage[labelformat=empty]{subfig}

\newtheorem{theorem}{Theorem}[section]
\newtheorem{lemma}[theorem]{Lemma}
\newtheorem{corollary}[theorem]{Corollary}
\newtheorem{proposition}[theorem]{Proposition}

\theoremstyle{definition} \newtheorem{definition}[theorem]{Definition}
\newtheorem{example}[theorem]{Example}

\theoremstyle{remark} \newtheorem*{remark}{Remark}
\newtheorem*{nota}{Notation}

\numberwithin{equation} {section} \newcounter{temp}

\makeatletter \def\square{\RIfM@\bgroup\else$\bgroup\aftergroup$\fi
\vcenter{\hrule\hbox{\vrule\@height.6em\kern.6em\vrule}\hrule}\egroup}
\makeatother

\DeclareMathOperator{\Spec}{Spec} \DeclareMathOperator{\Proj}{Proj}
\DeclareMathOperator{\Hom}{Hom} 
\DeclareMathOperator{\image}{im} 
\DeclareMathOperator{\Coker}{Coker} 
 \DeclareMathOperator{\link}{lk}

 \DeclareMathOperator{\Def}{Def}
\DeclareMathOperator{\rank}{rank} 
 \DeclareMathOperator{\vertices}{vert}
\DeclareMathOperator{\Sat}{Sat} \DeclareMathOperator{\lcm}{lcm}
 \DeclareMathOperator{\GL}{GL}
\DeclareMathOperator{\PGL}{PGL} \DeclareMathOperator{\Aut}{Aut}
\DeclareMathOperator{\diag}{diag} \DeclareMathOperator{\interior}{int}
\DeclareMathOperator{\SL}{SL} 

\begin{document}

\title{Deformations of equivelar Stanley-Reisner abelian surfaces}
\author{Jan Arthur Christophersen} \address{Matematisk institutt,
Postboks 1053 Blindern, University of Oslo, N-0316 Oslo, Norway}
\email{christop@math.uio.no} \thanks{ I am grateful to Klaus Altmann,
Kristian Ranestad, Benjamin Nill, Maxmillian Kreuzer, Christian Hasse
and Frank Lutz for helpful discussions and answering questions. Much
of this work was done during a years visit at the Johannes
Gutenberg-Universit\"{a}t Mainz.  The stay was funded by the
Sonderforschungsbereich/Transregio 45: Periods, Moduli Spaces and
Arithmetic of Algebraic Varieties.}  \date{\today}
\subjclass[2000]{Primary 14D15; Secondary 14K10, 14M25, 13F55}
\begin{abstract} The versal deformation of Stanley-Reisner schemes
associated to equivelar triangulations of the torus is studied. The
deformation space is defined by binomials and there is a toric
smoothing component which I describe in terms of cones and lattices.
Connections to moduli of abelian surfaces are considered. The case of
the M\"{o}bius torus is especially nice and leads to a projective
Calabi-Yau 3-fold with Euler number 6.
\end{abstract}

\maketitle

 \section*{Introduction} Versal deformation spaces in algebraic
geometry tend to be either smooth, i.e. the object to deform is
unobstructed, or much too complicated to compute. In general the
equations defining formal versal base spaces are not polynomials. The
purpose of this paper is to present an exception. In \cite{ac:def} we
showed that triangulated surface manifolds with regular edge graph of
degree $6$, give Stanley-Reisner schemes with nicely presented formal
versal deformations spaces defined by polynomials, in fact
binomials. Such a surface is either a torus or a Klein bottle.

In this paper, the torus case is studied. Triangulations of (or more
generally maps on) surfaces with regular edge graph are called
equivelar.  In the torus case they were only recently classified and
counted by Brehm and K\"{u}hnel in \cite{bk:equ}. The Stanley-Reisner
schemes of such tori are all smoothable and they smooth to abelian
surfaces.

To avoid non-algebraic abelian surfaces I will work with the functor
$\Def_{(X,L)}$ where $X$ is a scheme and $L$ is an invertible
sheaf. Since a projective Stanley-Reisner scheme $X$ comes equipped
with a very ample line bundle I define $\Def^{a}_{X} = \Def_{(X,
\mathcal{O}_{X}(1))}$ and it is the versal formal element of this
functor I consider.

Let $T$ be an equivelar triangulated torus and $X$ the Stanley-Reisner
scheme. Since the equations defining $\Def^{a}_{X}$ are binomial, we
may use the results and techniques of Eisenbud and Sturmfels in
\cite{es:bin} to realize the smoothing components as toric
varieties. The main computational part, Section \ref{analysis}, is
about the cones and lattices that determine these toric varieties (or
their normalizations). I then apply these results to statements about
the deformations of $X$ in Section \ref{def}.

There should be a connection between the results in this paper and
moduli of polarized abelian surfaces. In Section \ref{modulisec} I
describe a Heisenberg group $H_{T}$ associated to $T$. There turns out
to be a smooth 3 dimensional subspace $\mathcal{M} \subset
\Def_{X}^{a}$ containing all isomorphism classes of smoothings of
$X$. Moreover, the fibers are exactly the $H_{T}$ invariant
deformations of $X$. There is a finite group acting on $\mathcal{M}$
inducing isomorphisms on the fibers and the quotient space
$\bar{\mathcal{M}}$ can be easily described in toric geometric terms.

To understand the connection to moduli one should extend the results
in this paper to the non-polyhedral equivelar maps on the torus. These
should correspond to moduli where the polarization class is not
represented by a very ample line bundle. This is at the moment work in
progress.

In principle one can write equations for abelian surfaces in
$\mathbb{P}^{n-1}$ as perturbations of the Stanley Reisner ideal of
$T$. I include some details about this ideal in
Section~\ref{SR-scheme}. An application can be found in
Section~\ref{T7}.

The last section deals in detail with the vertex minimal triangulation
of the torus, sometimes called the M\"{o}bius torus, with 7
vertices. Here the toric geometry of $\Def^{a}_{X}$ is extremely nice
and leads to a Calabi-Yau 3-fold with Euler number 6. In this case it
is also possible to find all the components of $\Def_{X}^{a}$ and the
generic non-smoothable fibers.

It is convenient to work over the ground field $\mathbb{C}$. I use the
notation $\Def^{a}_{X}$ for \emph{both} the functor and the versal
base space. Throughout this paper $G^{\ast}: = \Hom_{\mathbb{Z}}(G,
\mathbb{C}^{\ast})$ is the character group of $G$.

  
  \section{Preliminaries}
 
\subsection{Equivelar triangulations of the torus} \label{about36} I
start by defining the main combinatorial object in this paper. For
details and proofs see \cite{bk:equ}. A map on a surface is called
\emph{equivelar} if there are numbers $p$ and $q$ such that every
vertex is $q$-valent and every facet contains exactly $p$ vertices. On
a torus we can only have $(p,q)$ equal $(6,3)$, $(3,6)$ or $(4,4)$.

We will consider \emph{triangulated} tori, i.e. the case $p=3,
q=6$. In this paper we need honest triangulations and assume the map
is \emph{polyhedral}. This means that the intersection of two
triangles is a common face (i.e. empty, vertex or edge).

Every triangulated torus (also the non-polyhedral) are obtained as a
quotient of the regular tessellation of the plane by equilateral
triangles (\cite{neg:uni}).  Denote this tessellation $\{3,6\}$. We
will need to make this explicit and will refer to the following as the
\emph{standard description}.

We may describe $\{3,6\}$ as an explicit triangulation of
$\mathbb{R}^{2}$, i.e. we always assume a chosen origin $0$ and
coordinates $(x,y)$. We may assume the vertices of $\{3,6\}$ form the
rank $2$ lattice spanned by $(1,0)$ and
$\frac{1}{2}(1,\sqrt{3})$. Denote this lattice by $\mathbb{T}$.  We
may think of $\mathbb{T}$ as the translation subgroup of
$\Aut(\{3,6\})$. Now let $\Gamma \subseteq \mathbb{T}$ be a sublattice
of finite index and set $T = \{3,6\}/\Gamma$. Then $T$ is a (not
necessarily polyhedral) equivelar triangulated torus.  Such a
triangulation is called \emph{chiral} if the $\mathbb{Z}_{6}$ rotation
on $\{3,6\}$ descends to $T$. Chiral maps on the torus were studied
and classified in \cite{co:con}.

We may assume that $\Gamma$ is generated by $a(1,0)$ and $b(1,0) +
c\frac{1}{2}(1,\sqrt{3})$ for integers $a,b,c$ with $ac \ne 0$,
i.e. it is the image of
 $$
\begin{pmatrix} a & b\\ 0 & c
\end{pmatrix} $$ in the above basis for $\mathbb{T}$. In
\cite[Proposition 2]{bk:equ} it is shown that two such matrices,
$M_{1}$ and $M_{2}$, represent isomorphic triangulated tori if and
only if $M_{2} = P M_{1} Q$, with $Q \in \GL_{2}(\mathbb{Z})$ and $P$
in the $D_{6}$ subgroup generated by the rotation $\rho$ and
reflection $\sigma$,
$$
\rho =
\begin{pmatrix} 0 & -1\\ 1 & 1
\end{pmatrix} \quad \sigma =
\begin{pmatrix} -1 & -1\\ 0 & 1
\end{pmatrix} \, .$$

\begin{figure} \centering
\includegraphics[scale=0.9]{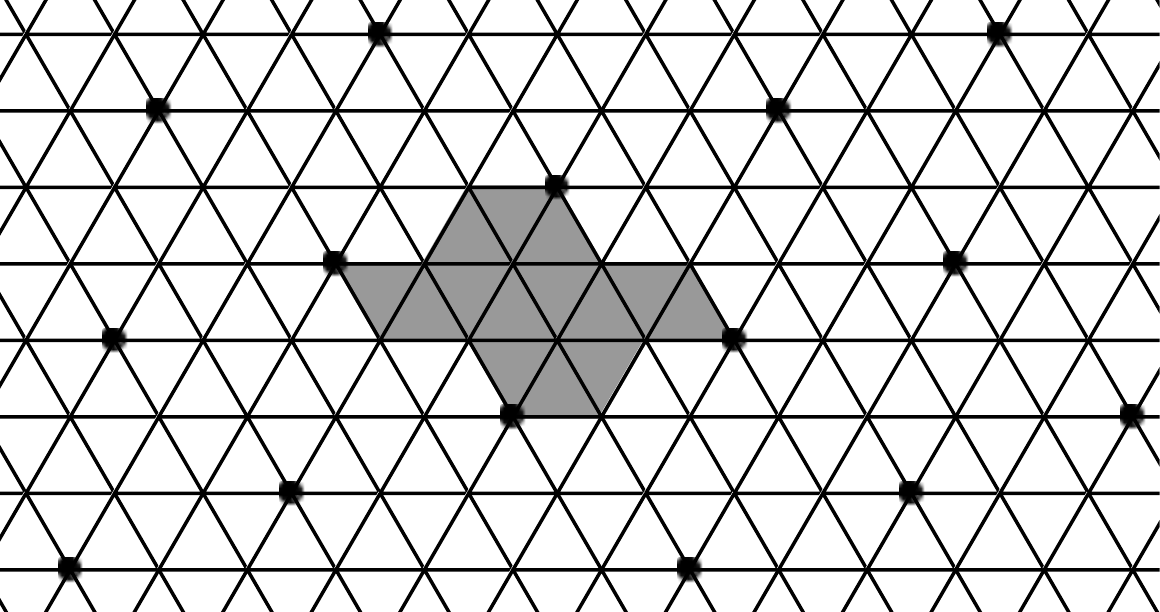}
\caption{The tessellation $\{3,6\}$ with marked lattice points for the
$\Gamma$ with $a=7, b=2,c=1$. The shaded area is a fundamental domain
for the $\Gamma$ action on $\{3,6\}$.}
\label{lattice}
\end{figure}

\subsection{Deformations of Stanley-Reisner schemes} \label{simp} I
refer to \cite{ac:def} and the references there for definitions,
details and proofs about deformations of Stanley-Reisner schemes. As a
general reference for deformation theory see \cite{ser:def}.

Let $[n]$ be the set $\{0,\ldots,n-1\}$ and let
$\Delta_{n-1}:=2^{[n]}$ be the full simplex. Let
$P=\mathbb{C}[x_0,\ldots,x_{n-1}]$ be the polynomial ring in $n$
variables.  If $a=\{i_{1},\dots ,i_{k}\}\in \Delta_{n-1}$, we write
$x_a\in P$ for the square free monomial $x_{i_1}\cdots x_{i_k}$.  A
simplicial complex $\mathcal{K}\subseteq \Delta_{n-1}$ gives rise to
an ideal
\[ I_\mathcal{K}:=\langle x_{p} \, | \, p\in \Delta_{n-1}\setminus
\mathcal{K}\rangle \subseteq P .
\] The {\it Stanley-Reisner ring} is then
$A_\mathcal{K}=P/I_\mathcal{K}$.  We refer to \cite{sta:com} for more
on Stanley-Reisner rings. The corresponding projective Stanley-Reisner
scheme is ${\mathbb P}(\mathcal{K}) = \Proj A_\mathcal{K} \subseteq
\mathbb{P}^{n-1}$. Note that ${\mathbb P}(\mathcal{K})$ comes with a
very ample line bundle $\mathcal{O}_{{\mathbb P}(\mathcal{K})}(1)$.

The scheme ${\mathbb P}(\mathcal{K})$ looks like the geometric
realization of $\mathcal{K}$. It is a union of irreducible components
$X_{F} = {\mathbb P}^{\dim F}$, $F$ a facet of $\mathcal{K}$,
intersecting as in $\mathcal{K}$. There is also a natural open affine
cover described in terms of Stanley-Reisner rings.  Recall that the
\emph{link} of a face is $$ \link(f,\mathcal{K}) = \{g \in \mathcal{K}
: g \cap f = \emptyset \text{ and } g \cup f \in \mathcal{K} \} \, .$$
If $f \in 2^{[n]}$, let $D_{+}(x_f) \subseteq {\mathbb
P}(\mathcal{K})$ be the chart corresponding to homogeneous
localization of $A_\mathcal{K}$ by the powers of $x_f$. Then
$D_{+}(x_f)$ is empty unless $f\in \mathcal{K}$ and if $f\in
\mathcal{K}$ then $$ D_{+}(x_f) = {\mathbb A}(\link(f,\mathcal{K}))
\times (\mathbb{C}^{*})^{\dim f} $$ where ${\mathbb A}(\mathcal{K})$
denotes $\Spec A_\mathcal{K}$.

The cohomology of the structure sheaf is given by $H^p({\mathbb
P}(\mathcal{K}),\mathcal{O}_{{\mathbb P}(\mathcal{K})}) \simeq
H^{p}(\mathcal{K};\mathbb{C})$ (Hochster, see \cite[Theorem
2.2]{ac:def}). If $\mathcal{K}$ is an \emph{orientable} combinatorial
manifold then the canonical sheaf is trivial (\cite[Theorem
6.1]{be:gra}). Thus a smoothing of such a ${\mathbb P}(\mathcal{K})$
would yield smooth schemes with trivial canonical bundle and structure
sheaf cohomology equaling $H^{p}(\mathcal{K};\mathbb{C})$. In
particular if $\mathcal{K}$ comes from a triangulation of a $2$
dimensional torus then a smoothing of ${\mathbb P}(\mathcal{K})$ is an
abelian surface.

In the surface case deformations of Stanley-Reisner schemes may
include non-algebraic schemes. It is therefore convenient to work with
the functor $\Def_{(X,L)}$ where $X$ is a scheme and $L$ is an
invertible sheaf. (See \cite[3.3.3]{ser:def} and \cite[3]{ac:def}.) We
defined $\Def^{a}_{\mathbb{P}(\mathcal{K})} =
\Def_{(\mathbb{P}(\mathcal{K}), \mathcal{O}_{{\mathbb
P}(\mathcal{K})}(1))}$. If $\mathcal{K}$ is a combinatorial manifold
without boundary then $$ \Def_{{\mathbb
P}(\mathcal{K})}^{a}(\mathbb{C}[\epsilon]) \simeq
H^0(\mathbb{P}(\mathcal{K}), {\mathcal T}_{\mathbb{P}(\mathcal{K})}^1)
\simeq T^1_{A_\mathcal{K},0}$$ and $H^0(\mathbb{P}(\mathcal{K}),
{\mathcal T}_{\mathbb{P}(\mathcal{K})}^2)$ contains all obstructions
for $\Def_{{\mathbb P}(\mathcal{K})}^{a}$ (\cite[Theorem
6.1]{ac:def}). For certain surfaces the versal base space for
$\Def^{a}_{\mathbb{P}(\mathcal{K})}$ may be computed and as we shall
see this is particularly nice for equivelar triangulated tori.

It follows from the results in \cite{ac:def} that if $\mathcal{K}$ is
a combinatorial manifold without boundary and all vertices have
valency greater than or equal 5, then $T^1_{A_\mathcal{K},0}$ is the
$\mathbb{C}$ vector space on the edges of $\mathcal{K}$. Since
$\mathcal{K}$ is a manifold, the link of an edge must be two vertices,
i.e. $\link(\{p,q\}) = \{\{i\}, \{j\}\}$. If $\varphi_{p,q} \in
T^1_{A_\mathcal{K},0}$ is the basis element corresponding to $\{p,q\}$
and $x_{m}$ is in the Stanley-Reisner ideal, then
$$
\varphi_{p,q}(x_{m}) =
\begin{cases} \frac{x_{m}x_{p}x_{q}}{x_{i}x_{j}} \quad & \text{if
$\{i,j\} \subseteq m$} \\ 0 & \text{otherwise.}
\end{cases}$$

There is a natural $(n-1)$-dimensional torus action on
$\Proj(A_\mathcal{K})$ where $[\lambda_{0}, \dots , \lambda_{n-1}] \in
(\mathbb{C}^{\ast})^{n}/\mathbb{C}^{\ast}$ takes $x_{i} \in
A_\mathcal{K}$ to $\lambda_{i}x_{i}$. Note that the induced action on
a $\varphi_{p,q} \in T^1_{A_\mathcal{K},0}$ as above is
$$
\varphi_{p,q} \mapsto
\frac{\lambda_{p}\lambda_{q}}{\lambda_{i}\lambda_{j}}\varphi_{p,q}\,
.$$ If $t_{p,q}$ is the corresponding coordinate function on the
versal base space then the action is the contragredient, i.e. $t_{p,q}
\mapsto (\lambda_{i}\lambda_{j}/\lambda_{p}\lambda_{q})t_{p,q} $.

\subsection{Binomial ideals} \label{latticeid} In \cite{es:bin}
Eisenbud and Sturmfels prove, among other things, that every binomial
ideal has a primary decomposition all of whose primary components are
binomial. I review here some of the results and notions from that
paper. (See also \cite{dmm:com}).

If $w = (a_{1}, \dots , a_{n}) \in \mathbb{Z}^{n}_{\ge 0}$, write
$t^{w} = \prod t_{i}^{a_{i}}$ for a monomial in $P = k[t_{1}, \dots ,
t_{n}]$, $k$ for the time being is any algebraically closed field.  A
binomial is a polynomial with at most two terms, $at^{v} - bt^{w}$
with $a,b \in k$. A binomial ideal is an ideal of $P$ generated by
binomials.

For an integer vector $v$, let $v_{+}$ and $v_{-}$, both with
non-negative coordinates, be the positive and negative part of $v$,
i.e. $v = v_{+} - v_{-}$. In general define, for a sublattice $L
\subseteq \mathbb{Z}^{n}$ the \emph{lattice ideal} of $L$ by $$ I_{L}
= \langle t^{v_{+}} - t^{v_{-}} : v \in L \rangle \subseteq k[t_{1},
\dots , t_{n}] \, .$$ More generally for any character $\rho \in
\Hom_{\mathbb{Z}}(L,k^{\ast})$, define
 $$
I_{L,\rho} = \langle t^{v_{+}} -\rho(v) t^{v_{-}} : v \in L \rangle \,
.$$ If $\rho^{\prime}$ is an extension of $\rho$ to $\mathbb{Z}^{n}$,
then the automorphism $t_{i} \mapsto
\rho^{\prime}(\varepsilon_{i})t_{i}$ induces an isomorphism $I_{L}
\simeq I_{L,\rho}$.

Define the \emph{saturation} of $L$ in $\mathbb{Z}^{n}$ as the
lattice $$ \Sat L = \{ v \in \mathbb{Z}^{n} : d v \in L \text{ for
some $d \in \mathbb{Z}$} \} \, .$$ Note $\Sat L/L$ is finite. The
lattice $L$ is \emph{saturated in $\mathbb{Z}^{n}$} if $\Sat L =
L$. The lattice ideal is a prime ideal if and only if $L$ is saturated
(\cite[Theorem 2.1]{es:bin}). In fact \cite[Corollary 2.3]{es:bin}
states that
$$
I_{L} = \bigcap_{\rho \in (\Sat L/L)^{\ast}} I_{\Sat L, \rho}$$ is a
minimal primary decomposition.

Let $I$ be a binomial ideal. Let $\mathcal{Z} \subseteq \{1, \dots
,n\}$ and let $\mathfrak{p}_{\mathcal{Z}} = \langle t_{i} : i \notin
\mathcal{Z}\rangle$ and $\mathbb{Z}^{\mathcal{Z}} \subset
\mathbb{Z}^{n}$ the sublattice spanned by the standard basis elements
$e_{i}, i \in \mathcal{Z}$. The result we need from \cite{es:bin} is
the following. (It is not stated in the following form in that paper
and in fact much stronger results are proven there.)
\begin{theorem} \label{es} If characteristic $k$ is 0, the associated
primes of the binomial ideal $I$ are all of the form $I _{\Sat
L_{\mathcal{Z}}, \rho} + \mathfrak{p}_{\mathcal{Z}}$ for some
sublattice $L_{\mathcal{Z}} \subseteq \mathbb{Z}^{\mathcal{Z}}$.
\end{theorem}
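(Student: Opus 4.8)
The plan is to reduce the statement to a structural description of binomial \emph{primes} and then to identify each such prime with an ideal of the stated shape. The key external input, which I take from \cite{es:bin}, is that over a field of characteristic $0$ the radical of a binomial ideal is again binomial, and consequently every associated prime of a binomial ideal $I$ is itself a binomial prime ideal. Granting this, it suffices to show that an arbitrary binomial prime $P \subseteq k[t_1, \dots, t_n]$ can be written as $I_{\Sat L_{\mathcal{Z}}, \rho} + \mathfrak{p}_{\mathcal{Z}}$ for a suitable $\mathcal{Z}$, sublattice $L_{\mathcal{Z}} \subseteq \mathbb{Z}^{\mathcal{Z}}$ and character $\rho$.

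First I would isolate the variables. Let $\mathcal{Z} = \{ i : t_i \notin P \}$; then every $t_i$ with $i \notin \mathcal{Z}$ lies in $P$, so $\mathfrak{p}_{\mathcal{Z}} \subseteq P$. Passing to the quotient $k[t_i : i \in \mathcal{Z}] = k[t_1,\dots,t_n]/\mathfrak{p}_{\mathcal{Z}}$, the image $\bar{P}$ is again a binomial prime and it contains no variable; being prime it therefore contains no monomial at all, so it is generated by honest binomials $t^u - c\,t^w$ with $c \in k^{\ast}$. Localizing at the $t_i$, $i \in \mathcal{Z}$ (which are nonzerodivisors modulo $\bar{P}$), I pass to the Laurent ring $R$ and set $L_{\mathcal{Z}} = \{\, a \in \mathbb{Z}^{\mathcal{Z}} : t^a - c \text{ lies in the extension } \bar{P}R \text{ for some } c \in k^{\ast}\,\}$. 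Because $\bar{P}R$ is prime the scalar $c$ attached to each $a$ is unique, so $\rho(a) = c$ is a well-defined group homomorphism $\rho : L_{\mathcal{Z}} \to k^{\ast}$, and $L_{\mathcal{Z}}$ is a sublattice of $\mathbb{Z}^{\mathcal{Z}}$.

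The crucial point is that $L_{\mathcal{Z}}$ is saturated, and this is exactly where characteristic $0$ and algebraic closedness enter. If $d\,a \in L_{\mathcal{Z}}$ then the element $s = t^a$ of the domain $R/\bar{P}R$ satisfies $s^d = \rho(da) \in k^{\ast}$. In characteristic $0$ the polynomial $T^d - \rho(da)$ is separable and splits over the algebraically closed field $k$ as a product of distinct linear factors $\prod_j (T - \zeta_j)$; evaluating at $s$ gives $\prod_j (s - \zeta_j) = 0$ in the domain $R/\bar{P}R$, forcing $s = \zeta_j$ for some $j$. Thus $t^a - \zeta_j \in \bar{P}R$, i.e. $a \in L_{\mathcal{Z}}$, so $\Sat L_{\mathcal{Z}} = L_{\mathcal{Z}}$. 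By the lattice-ideal dictionary of \cite[\S 2]{es:bin} one then checks that $\bar{P} = I_{L_{\mathcal{Z}}, \rho} = I_{\Sat L_{\mathcal{Z}}, \rho}$, since both have the same extension to $R$ and both are saturated with respect to the $t_i$; whence $P = I_{\Sat L_{\mathcal{Z}}, \rho} + \mathfrak{p}_{\mathcal{Z}}$.

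The main obstacle is genuinely the first reduction: proving that associated primes of binomial ideals are binomial is the deep content and relies on the full apparatus of \cite{es:bin}. Within the elementary part sketched above, the only subtle step is the saturation argument of the third paragraph, where the separability of $T^d - \rho(da)$ --- and hence the characteristic-$0$ hypothesis --- is indispensable; in positive characteristic $L_{\mathcal{Z}}$ need not be saturated and the description would have to be modified.
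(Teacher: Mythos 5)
The paper offers no proof of this statement at all: Theorem \ref{es} is presented as a distillation of \cite{es:bin}, with the explicit caveat that it ``is not stated in the following form in that paper.'' So the honest comparison is with Eisenbud--Sturmfels themselves, and there your division of labor is exactly theirs: you import the genuinely deep fact --- that over an algebraically closed field every associated prime of a binomial ideal is binomial --- and then give a self-contained proof of their classification of binomial primes (their Corollary 2.6). That second part checks out in full: the choice of $\mathcal{Z}$ so that $\mathfrak{p}_{\mathcal{Z}} \subseteq P$, the observation that a prime containing no variable contains no monomial, the passage to the Laurent ring, the well-definedness of $\rho$ from primeness of $\bar{P}R$, the saturation of $L_{\mathcal{Z}}$, and the double-contraction argument identifying $\bar{P}$ with $I_{L_{\mathcal{Z}},\rho}$ (both ideals being saturated with respect to the product of the variables) are all correct. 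Your write-up thus supplies the formal half that the paper leaves to the reader, while correctly flagging the associated-primes theorem as the part that cannot be reproduced cheaply.

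Two misstatements, neither fatal to the argument. First, ``the radical of a binomial ideal is binomial, and consequently every associated prime is binomial'' is a non sequitur: the radical is the intersection of the \emph{minimal} primes, so binomiality of the radical says nothing about embedded associated primes. The statement you need is a separate theorem in \cite{es:bin}, proved by their cellular decomposition technique; cite it directly rather than deriving it from the radical. Second, your closing claim that separability --- hence characteristic $0$ --- is indispensable in the saturation step is wrong: distinctness of the roots is never used. Over an algebraically closed field of any characteristic one has $T^{d} - \rho(da) = \prod_{j}(T - \zeta_{j})^{m_{j}}$, and evaluating at $s$ in the domain $R/\bar{P}R$ still forces $s = \zeta_{j}$ for some $j$. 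Hence binomial primes correspond to saturated lattices in \emph{every} characteristic, which is why Eisenbud--Sturmfels state their Corollary 2.6 with no characteristic hypothesis. The characteristic-$0$ assumption in Theorem \ref{es} is a convenience of the ambient paper (which works over $\mathbb{C}$, and where characteristic $0$ is genuinely used nearby, e.g. for the minimal primary decomposition $I_{L} = \bigcap_{\rho} I_{\Sat L, \rho}$, which fails when the characteristic divides $\lvert \Sat L / L \rvert$), not something your argument needs at that step.
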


For simplicity let us assume $I$ is generated by pure binomials of the
form $t^{v} - t^{w}$. Define the \emph{exponent vector} of $t^{v} -
t^{w}$ to be $v - w \in \mathbb{Z}^{m}$. Let $L \subset
\mathbb{Z}^{m}$ be the sublattice spanned by the exponent vectors of
the generators of $I$. It follows from the above and \cite[Theorem
6.1]{es:bin} that the $ I_{\Sat L, \rho}$ will be minimal prime ideals
for $I$. It follows from the theorem that all other associated primes
must contain some variable $t_{i}$.

\subsection{Gorenstein, reflexive and Cayley cones} I recall some
notions originally introduced in \cite{bb:dua} in connection with
mirror symmetry.  A general reference is \cite{bn:com}.  If $M \simeq
\mathbb{Z}^{n}$ then set as usual $N = \Hom(M,\mathbb{Z})$. A  rational finite polyhedral cone
$\sigma \subseteq M_{\mathbb{R}} = M \otimes \mathbb{R}$ is called
\emph{Gorenstein} if there exits $n_{\sigma} \in N$ with $\langle v,
n_{\sigma} \rangle = 1$ for all primitive generators $v \in M$ of rays
of $\sigma$.  This means that the affine toric variety $X_{\sigma}$ is
Gorenstein.

The cone $\sigma$ is called \emph{reflexive} if the dual cone
$\sigma^{\vee}$ is also Gorenstein. Let $m_{\sigma^{\vee}} \in M$ be
the determining lattice point. The number $r= \langle
m_{\sigma^{\vee}}, n_{\sigma} \rangle$ is the \emph{index} of the
reflexive cone $\sigma$.

A polytope in $M_{\mathbb{R}} = M \otimes \mathbb{R}$ is called a
lattice polytope if its set of vertices is in $M$.  Let $\Delta_{1},
\dots , \Delta_{r} \subseteq L_{\mathbb{R}}$ be lattice polytopes in a
rank $d$ lattice $L$. Let $M = L \oplus \mathbb{Z}^{r}$, where
$\{\epsilon_{1}, \dots ,\epsilon_{r}\}$ is the standard basis for
$\mathbb{Z}^{r}$. The cone $$ \sigma = \{(\lambda_{1}, \dots ,
\lambda_{r}, \lambda_{1}x_{1} + \dots + \lambda_{r}x_{r}) \in
M_{\mathbb{R}} : \lambda_{i} \in \mathbb{R}_{\ge 0}, x_{i} \in
\Delta_{i}, i = 1, \dots, r\}$$ is called the \emph{Cayley cone}
associated to $\Delta_{1}, \dots , \Delta_{r}$. It is a Gorenstein
cone with $n_{\sigma} = \epsilon_{1}^{\ast}+ \dots
+\epsilon_{r}^{\ast}$. A reflexive Gorenstein cone of index $r$ is
\emph{completely split} if it is the Cayley cone associated to $r$
lattice polytopes.

\subsection{Moduli and Heisenberg groups } \label{heis} There is a
large amount of literature on moduli of polarized abelian varieties
starting with \cite{mu:one}. I mention here only some articles where
the surface case is studied in detail: \cite{hs:kod}, \cite{gp:equ},
\cite{gp:cal}, \cite{ms:mod}, \cite{mr:deg} and \cite{ma:fam}.  In
particular the last three are about the $(1,7)$ case and I will
comment on them in Section \ref{T7}.

Heisenberg groups are an important ingredient in the construction of
these moduli spaces. The following construction is based on
\cite{mu:one}.  Since we are dealing with surfaces I describe only the
2 dimensional case.

Let $\delta = (d_{1},d_{2})$ be a list of elementary divisors,
i.e. $d_{i}$ are positive integers and $d_{1}| d_{2}$. Set $K(\delta)
= \mathbb{Z}_{d_{1}} \oplus \mathbb{Z}_{d_{2}}$ with character group
$K(\delta)^{\ast} = \mu_{d_{1}} \times \mu_{d_{2}}$. (Here $\mu_{d} =
\mathbb{Z}_{d}^{\ast}$ are the $d$'th roots of unity.) Define the
abstract finite \emph{Heisenberg group} $H_{\delta}$ as the extension
$$
1 \to \mu_{d_{2}} \to H_{\delta} \to K(\delta) \oplus K(\delta)^{\ast}
\to 0$$ where multiplication in $\mu_{d_{2}} \oplus K(\delta) \oplus
K(\delta)^{\ast}$ is defined by $$ (\omega, \tau, \sigma) \cdot
(\omega^{\prime}, \tau^{\prime}, \sigma^{\prime}) = (\omega \cdot
\omega^{\prime} \cdot \sigma^{\prime}(\tau), \tau + \tau^{\prime},
\sigma \cdot \sigma^{\prime}) \, .$$ If $n=d_{1}d_{2} = |K(\delta)|$
then $H_{\delta}$ has a unique $n$-dimensional irreducible
representation $V(\delta)$ in which the center $\mu_{d_{2}} \subset
\mathbb{C}^{\ast}$ acts by its natural character (\cite[Proposition
3]{mu:one}). One may realize $V(\delta)$ as the vector space of
$\mathbb{C}$ valued functions $f$ on $K(\delta)$. Then the action is
defined by $$ ((\omega, \tau, \sigma)\cdot f)(\tau^{\prime}) = \omega
\cdot \sigma(\tau^{\prime}) \cdot f(\tau + \tau^{\prime}) \, .$$ The
representation $V(\delta)$ is known as the \emph{Schr\"{o}dinger
representation} of the Heisenberg group.

\section{Overview} \label{ov} Let $T$ be an equivelar triangulation of
the torus with $n$ vertices and $X \subset \mathbb{P}^{n-1}$ the
projective Stanley-Reisner scheme associated to $T$. Let $\Gamma$ be
the sublattice of $\mathbb T$ defining $T$; i.e. $T = \{3,6\}/\Gamma$
and $G=\mathbb T/\Gamma$.

There are three for us important elements of $\mathbb{T}$ and I will
call them and their images in $G$ the \emph{principal
translations}. In the standard description (see Section~\ref{about36})
they are $$ \tau_{1} = (1, 0) \quad \tau_{2} =
\frac{1}{2}(-1,\sqrt{3}) \quad \tau_{3} = \frac{1}{2}(-1,-\sqrt{3}) \,
.$$ There is the relation $\tau_{1} + \tau_{2} + \tau_{3} = 0$. In
$\mathbb{T}$ and therefore also in $G$, any pair of them generate the
group.

The following proposition is our central observation and allows us a
natural identification of the elements of $G$ and $\vertices T$, a
fact we will use throughout. Recall that given a group $G$ with a
finite set of generators $S$ one may
construct a directed graph, called the \emph{Cayley graph}, where
the vertices are the elements of $G$. The edges are all ordered pairs
$(g,sg)$ for some $s \in S$. If one assigns a color to each element of
$S$ then the graph is colored by giving the edge $(g,sg)$  the color
of $s$. I will refer to the underlying undirected graph as a Cayley
graph as well.

\begin{proposition} \label{cg} The edge graph of $T$ is the Cayley
graph of $G$ with respect to the principal translations. In particular
the action of $G$ on $\vertices T$ is simply transitive and the set of
edges of $T$ is partitioned by the three $G$ orbits of cardinality
$n$: $$ \{ \{p,\tau_{k}(p)\} : p \in \vertices T\}$$ for $k = 1,2,3$.
\end{proposition}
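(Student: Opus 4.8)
The plan is to exploit the standard description from Section~\ref{about36}, where $T = \{3,6\}/\Gamma$ and the vertices of $\{3,6\}$ form the lattice $\mathbb{T}$. The key identification is that the vertex set of $T$ is precisely $\mathbb{T}/\Gamma = G$, since the vertices of $\{3,6\}$ are exactly the lattice points $\mathbb{T}$, and passing to the quotient by $\Gamma$ identifies vertices of $T$ with cosets, i.e. elements of $G$. This already gives the simply transitive action of $G$ on $\vertices T$: the translation action of $\mathbb{T}$ on $\{3,6\}$ permutes vertices, and since $\Gamma$ acts trivially on the quotient, the induced $G$-action on $\vertices T$ is just the regular action of $G$ on itself by translation, which is simply transitive.

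Next I would verify the local structure of edges in $\{3,6\}$. The first step is to check that in the tessellation $\{3,6\}$, each vertex $p \in \mathbb{T}$ is joined by an edge to exactly its six nearest lattice neighbors, namely $p \pm \tau_1$, $p \pm \tau_2$, $p \pm \tau_3$, using the explicit coordinates $\tau_1 = (1,0)$, $\tau_2 = \frac{1}{2}(-1,\sqrt{3})$, $\tau_3 = \frac{1}{2}(-1,-\sqrt{3})$. This is a direct computation: these six vectors all have length $1$ (the edge length of the equilateral triangulation), they are the six shortest nonzero vectors in $\mathbb{T}$, and the three lines they span give the three edge directions of $\{3,6\}$. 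Since each principal translation $\tau_k$ and its negative $-\tau_k$ give the same undirected edge, the edges through $p$ split into the three directions indexed by $k=1,2,3$, matching the three generator-colors of the Cayley graph. The relation $\tau_1 + \tau_2 + \tau_3 = 0$ is exactly the statement that $-\tau_k$ equals the sum of the other two, reflecting the closing up of each triangle.

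With this, I would argue that the edge graph of $T$ is the Cayley graph of $G$ with respect to $\{\tau_1, \tau_2, \tau_3\}$. Under the identification $\vertices T = G$, an edge of $T$ is the image of an edge of $\{3,6\}$, hence connects some vertex-coset $p$ to $p + \tau_k$ for one of the three directions; conversely every such pair is an edge. Thus the edges are precisely the pairs $\{p, \tau_k(p)\}$, where I write $\tau_k(p)$ for $p + \tau_k$ in $G$. Partitioning by the value of $k$ gives the three orbit families claimed. Each family $\{\,\{p,\tau_k(p)\} : p \in \vertices T\,\}$ is a single $G$-orbit because $G$ acts transitively on vertices and commutes with the translation $\tau_k$; its cardinality is $n$ because there are $n$ vertices and, as $\tau_k \neq -\tau_k$ in $G$ for these honest (polyhedral) triangulations, the map $p \mapsto \{p,\tau_k(p)\}$ is two-to-one onto... in fact one must check it is a bijection onto the $k$-th edge class, so the count is $n$ as stated.

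I expect the main subtlety to lie in the polyhedrality hypothesis and the edge count. In a non-polyhedral quotient one could have $\tau_k(p) = p$ or $\tau_k(p) = \tau_j(p)$ for some $p$, causing edges to collapse or the three classes to fail to be disjoint of size $n$; the assumption that the map is polyhedral (so the image is an honest simplicial complex where distinct vertices of a triangle remain distinct) is exactly what guarantees that each of the three classes is a clean $G$-orbit of cardinality $n$ with no coincidences. Verifying that polyhedrality forces $\tau_k$ to have order strictly greater than $2$ in $G$ in each relevant sense, and that the three directions stay distinct in the quotient, is the one place where care beyond routine computation is required.
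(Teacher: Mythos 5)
Your proof is correct and takes essentially the same route as the paper, whose entire proof is the one-line observation that the edge graph of $\{3,6\}$ is the Cayley graph of $\mathbb{T}$ with respect to the principal translations, everything then descending to the quotient by $\Gamma$; you are simply making explicit the details (vertex identification $\vertices T = G$, the six nearest neighbors $p \pm \tau_k$, the orbit count) that the paper leaves implicit. One small cleanup: your phrase ``two-to-one onto\ldots\ in fact one must check it is a bijection'' should just say that $p \mapsto \{p,\tau_k(p)\}$ is injective precisely because $2\tau_k \neq 0$ in $G$ (it would be two-to-one if $\tau_k = -\tau_k$), which, as you rightly flag, follows from each vertex of the polyhedral triangulation having six distinct neighbors --- the same link-of-$p$ argument the paper records immediately after the proposition.
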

\begin{proof} The edge graph of $\{3,6\}$ is clearly the Cayley graph
of $\mathbb{T}$ with respect to the principal translations.
\end{proof}

An edge in $T$ is of \emph{type $k$} if it is of the form
$\{p,\tau_{k}(p)\}$. This is the natural coloring of the Cayley
graph. The link of $p$ in $T$ will be the cycle $$ (\tau_{1}(p),
-\tau_{3}(p), \tau_{2}(p), -\tau_{1}(p), \tau_{3}(p), -\tau_{2}(p)) \,
.$$ This shows that $\tau_{i} \ne \pm \tau_{j}$ if $T$ is polyhedral.

\begin{nota} When describing the interaction between the principal
translations it will be useful to have the following convention for
the indices. If $k$ is an element in $\{1,2,3\}$ then I will use the
indices $i,j$ to represent the remaining two elements of $\{1,2,3\}
\setminus \{k\}$. I will refer to this as the $ijk$-convention.
\end{nota}

Recall from Section~\ref{simp} that the tangent space of
$\Def^{a}_{X}$ has basis $\varphi_{p,q}$, $\{p,q\} \in T$. In our new
notation the corresponding perturbation is $$
\varphi_{p,\tau_{k}(p)}(x_{m}) =
\begin{cases}
\frac{x_{m}x_{p}x_{\tau_{k}(p)}}{x_{-\tau_{i}(p)}x_{-\tau_{j}(p)}}
\quad & \text{if $\{-\tau_{i}(p),-\tau_{j}(p)\} \subseteq m$} \\ 0 &
\text{otherwise.}
\end{cases}$$ Let $t_{p,q}$ be the dual basis of coordinate functions
on $\mathbb{C}^{3n}$.

For each $p \in \vertices T$ construct the matrix
\begin{equation}
\label{minors}
\begin{bmatrix} t_{p,\tau_{1}(p)} & t_{p,\tau_{2}(p)} &
t_{p,\tau_{3}(p)}\\ t_{p,-\tau_{1}(p)} & t_{p,-\tau_{2}(p)} &
t_{p,-\tau_{3}(p)}
\end{bmatrix}
\end{equation} and take the $2 \times 2$ minors. This yields $3n$
quadratic binomials. Let $I$ to be the ideal generated by them.
\begin{theorem}[\cite{ac:def} Theorem 6.10] \label{ac} The ideal $I$
defines a versal base space in $(\mathbb{C}^{3n}, 0)$ for
$\Def^{a}_{X}$.
\end{theorem}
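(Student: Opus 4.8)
The plan is to realize the versal base as the zero locus of the Kuranishi (obstruction) map $T^1_{A_T,0}\to H^0(X,\mathcal T^2_X)$, and to carry out the whole computation equivariantly for the torus action recalled in Section~\ref{simp}. First I would record that the $(\mathbb C^\ast)^{n}/\mathbb C^\ast$–action endows $T^1_{A_T,0}$, the obstruction space $H^0(X,\mathcal T^2_X)$, and every higher obstruction with a $\mathbb Z^n$–grading, where I index the standard basis of $\mathbb Z^n$ by $\vertices T = G$ via Proposition~\ref{cg}. From the weight formula $t_{p,q}\mapsto(\lambda_i\lambda_j/\lambda_p\lambda_q)t_{p,q}$ and the link description, $\varphi_{p,\tau_k(p)}$ is homogeneous of weight $e_{p-\tau_i}+e_{p-\tau_j}-e_{p}-e_{p+\tau_k}$ in the $ijk$–convention. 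Because deformation theory is functorial under the torus, the versal object may be built equivariantly and analysed one weight at a time; each graded piece is finite–dimensional, and it is this finiteness that will ultimately force the base equations to be polynomial rather than formal power series.

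Second, I would compute $H^0(X,\mathcal T^2_X)$ for a combinatorial $2$–manifold all of whose vertices have valency $\ge 5$ (here exactly $6$), paralleling the $T^1$ computation quoted in Section~\ref{simp}. The expectation is that all obstruction classes are indexed by pairs $(p,k)$ — a vertex together with an edge–direction — living in the weights $e_{p-\tau_k}+e_{p+\tau_k}-2e_p$, with one such class for each of the $3n$ pairs. The combinatorial heart of the matching is the direct check that the $2\times2$ minor of \eqref{minors} using columns $i$ and $j$ at the vertex $p$ is homogeneous of precisely the weight $e_{p-\tau_k}+e_{p+\tau_k}-2e_p$: the two entries in a column are the outgoing and incoming type–$k$ edges at $p$, and the link vertices supplied by the hexagon around $p$ cancel in pairs, leaving exactly the weight attached to the $(p,k)$–obstruction.

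Third, I would identify the primary (quadratic) part of the Kuranishi map. Lifting the tautological first–order deformation $\sum_{\{p,q\}\in T} t_{p,q}\,\varphi_{p,q}$ of the Stanley–Reisner ideal $I_T$ to second order produces an obstruction that is a quadratic form in the $t_{p,q}$ valued in $H^0(X,\mathcal T^2_X)$. Using the explicit action of the $\varphi$'s on the quadratic and cubic monomial generators $x_m$ of $I_T$, I would evaluate its component in each weight $e_{p-\tau_k}+e_{p+\tau_k}-2e_p$ and read off that it agrees, up to a nonzero scalar, with the corresponding $2\times2$ minor. The grading localizes this cup–product calculation to a single vertex star, so it is bookkeeping rather than geometry.

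Finally — and this is the step I expect to be the main obstacle — I would show that the obstruction calculation closes up at second order, so that $V(I)$ is the \emph{entire} versal base and not merely its quadratic approximation. Concretely I would exhibit an explicit family over $P/I$, given by perturbing the generators of $I_T$ with coefficients polynomial in the $t_{p,q}$, and prove flatness by lifting the Stanley–Reisner syzygies over $P/I$, or equivalently by a weight–graded Buchberger argument showing that no new obstruction arises in any order $\ge 3$. Granting flatness, versality follows from the completeness criterion: the Kodaira–Spencer map is the identity on $T^1_{A_T,0}$ by construction, and the quadratic leading terms computed above span the obstructions in all relevant weights, so the family is complete. The delicate point is the control of higher Massey products; the grading confines each to a fixed weight, and the minimal–valency hypothesis keeps those weights away from the degrees where fresh relations could appear, which is exactly what makes these equivelar tori exceptional.
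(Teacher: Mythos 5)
Your weight bookkeeping in steps 1--3 is sound --- the minor of \eqref{minors} attached to $(p,k)$ is indeed homogeneous of weight $e_{\tau_k(p)}+e_{-\tau_k(p)}-2e_p$, and matching the quadratic part of the obstruction map against these minors is a legitimate computation. But the mechanism you invoke to close the argument fails, and it fails at exactly the step you flag as the main obstacle. The only torus available \emph{a priori} is $({\mathbb C}^{\ast})^{n}/{\mathbb C}^{\ast}$ acting through automorphisms of $X$, and its grading does \emph{not} have finite-dimensional graded pieces on ${\mathbb C}[[t_{p,q}]]$: the weight map ${\mathbb Z}^{3n}_{\ge 0}\to{\mathbb Z}^{n}$ has a large positive kernel --- for instance the sum of all $3n$ weights $w_{p,q}$ vanishes, since each vertex occurs six times as an endpoint of an edge and six times in links of edges, so all powers of $\prod_{\{p,q\}} t_{p,q}$ sit in weight zero --- hence every weight is carried by infinitely many monomials, and the grading by itself neither forces polynomial equations nor confines Massey products to finitely many orders. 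The refined grading that does have finite pieces is the one by the matrix $A$ of Section~\ref{ov}, whose ${\mathbb Z}^{3}$ ``color'' factor comes from the extra $({\mathbb C}^{\ast})^{3}$-action; but that action is a symmetry of the \emph{answer} $I$, visible only once the equations are known --- it is not induced by automorphisms of $X$, so it cannot be fed into an equivariant Kuranishi construction. Consequently your step 4 (a flat family over $P/I$, equivalently vanishing of all obstructions in order $\ge 3$) is the entire content of the theorem and remains unproved: the appeal to a ``weight-graded Buchberger argument'' is not an argument, and note that the generators and syzygies of $I_T$ grow with $n$ (there are $\tfrac12 n(n-7)$ quadrics plus cubics, Section~\ref{SR-scheme}), so a uniform global syzygy-lifting over all equivelar tori is far from routine.

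The proof in \cite{ac:def} (Theorem 6.10; its structure is recalled in Section~\ref{def} above) closes this gap locally rather than globally, which is what makes it tractable uniformly in $n$. Each vertex chart $U_p$ is the affine cone over the hexagonal link of $p$ --- one fixed singularity independent of $n$ --- whose versal deformation is known explicitly: the total space $\mathcal{U}_p$ is cut out by the nine equations \eqref{Up} and the base by the three minors of the matrix \eqref{minors} at $p$. These local families are then glued over each Artinian truncation $R/\mathfrak{m}^{n+1}$ to produce a formal versal family over $V(I)$, so flatness is checked once, for the hexagon cone, instead of over the full homogeneous coordinate ring. If you want to rescue your outline, replace step 4 by this local-to-global construction: prove the local statement for the cone over the hexagon and then show the local families glue compatibly through all orders; that gluing, not the torus grading, is what guarantees no fresh obstructions appear in order $\ge 3$.
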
 I will misuse notation and also refer to this space as
$\Def^{a}_{X}$.

We will employ the results on binomial ideals reviewed in
Section~\ref{latticeid}. Let $L \subset \mathbb{Z}^{3n}$ be the
sublattice spanned by the exponent vectors of the generators of
$I$. Index the standard basis of $\mathbb{Z}^{3n}$,
$\varepsilon_{p,q}$, by the edges of $T$. The lattice $L$ is spanned
by the $3n$ vectors
\begin{equation}
\label{expv}
\begin{split} f_{3, p}& = \varepsilon_{p, \tau_{1}(p)} +
\varepsilon_{p, -\tau_{2}(p)} - \varepsilon_{p, \tau_{2}(p)} -
\varepsilon_{p, -\tau_{1}(p)}\\ f_{2, p}& =\varepsilon_{p,
\tau_{3}(p)} + \varepsilon_{p, -\tau_{1}(p)} - \varepsilon_{p,
\tau_{1}(p)} - \varepsilon_{p, -\tau_{3}(p)}\\ f_{1, p}&
=\varepsilon_{p, \tau_{2}(p)} + \varepsilon_{p, -\tau_{3}(p)} -
\varepsilon_{p, \tau_{3}(p)} - \varepsilon_{p, -\tau_{2}(p)} \, .
\end{split}
\end{equation} There are relations $f_{1,p}+f_{2,p}+f_{3,p} = 0$ for
each $p$ and $\sum_{p \in \vertices T} f_{k,p} = 0$ for each $k$. One
checks that indeed these are the generating relations and therefore
$\rank L = 2(n-1)$.

The $ I_{\Sat L, \rho}$ for $\rho \in (\Sat L/L)^{\ast}$ will be
minimal prime ideals for $I$. This means that each $\rho \in (\Sat
L/L)^{\ast}$ determines a component of the versal base space. I denote
these by $S_{\rho}$ and call them the \emph{main components} of
$\Def_{X}^{a}$.

Set $S$ to be the component for the trivial $\rho$, that is $S$ is
defined by the toric lattice ideal of $\Sat L$. In the recent
literature it is become normal to include non-normal varieties in the
term \emph{toric varieties}. I will also do this, thus $S$ is the germ
of an affine toric variety.  It is in general not normal. Since $\rank
\Sat L = 2(n-1)$, $\dim S = n+2$.

There are isomorphisms $\rho^{\prime}: S \simeq S_{\rho}$ as described
in Section \ref{latticeid}. By their construction, the $\rho^{\prime}$
are automorphisms of the polynomial ring restricting to the identity
on $I_{L}$ and therefore also on $I$. Thus $S \simeq S_{\rho}$ comes
from an automorphism of $\Def_{X}^{a}$. This implies that the families
over the two components are also isomorphic. So from the point of view
of deformations it is enough to study $S$.

As a toric variety the normalization of $S$, call it $\widetilde{S}$,
may be described by a rank $n+2$ lattice $M$ and a cone $\sigma^{\vee}
\subset M_{\mathbb{R}}$. To find $M$ and $\sigma^{\vee}$ we need to
find an integral $m \times 3n$ matrix $A$ (for some $m$) such that
$\ker A = \Sat L$. Then set $M = \image A$ and set $\sigma^{\vee}$ to
be the positive hull of the columns of $A$ in $M_{\mathbb{R}}$. (See
e.g. \cite{pt:tor}.) Note it is enough to find $A$ with $\rank A =
n+2$ and $L \subseteq \ker A$, since then $\rank L = \rank \ker A$ and
$\ker A$ is obviously saturated, so $\ker A = \Sat L$.  Let
$\mathbb{S}$ be the subsemigroup of $\mathbb{Z}^{m}_{\ge 0}$ generated
by the columns of $A$, thus $S$ and $\widetilde{S}$ are the germs at
$0$ of $\Spec \mathbb{C}[\mathbb{S}]$ and $\mathbb{C}[\sigma^{\vee}
\cap M]$.

There are two obvious torus actions on $\Def_{X}^{a}$ and the weights
of these actions will give us $A$. First consider the natural action
described in Section~\ref{simp} induced by automorphisms of $X$. Let
$w_{pq} \in \{(a_{0}, \dots , a_{n-1}) \in \mathbb{Z}^{n} : \sum a_{i}
= 0\}$ be the weights of this torus action on the basis
$\varphi_{pq}$. With the $ijk$-convention $$ w_{p,\tau_{k}(p)} = e_{p}
+ e_{\tau_{k}(p)} - e_{-\tau_{i}(p)} - e_{-\tau_{j}(p)}$$ for $p \in
\vertices T$, $k=1,2,3$, where $e_{p}$ are the standard basis for
$\mathbb{Z}^{n}$.

The coloring of the Cayley graph and the structure of $I$ give us
another torus action, not seen on $X$. Clearly the minors of $$
\begin{bmatrix} \lambda_{1}t_{p,\tau_{1}(p)} &
\lambda_{2}t_{p,\tau_{2}(p)} & \lambda_{3}t_{p,\tau_{3}(p)}\\
\lambda_{1}t_{p,-\tau_{1}(p)} & \lambda_{2}t_{p,-\tau_{2}(p)} &
\lambda_{3}t_{p,-\tau_{3}(p)}
\end{bmatrix}$$ also generate $I$. Thus the ${\mathbb{C}^{\ast}}^{3}$
action, $(\lambda_{1}, \lambda_{2}, \lambda_{3}) \cdot
t_{p,\tau_{k}(p)} = \lambda_{k} t_{p,\tau_{k}(p)}$ preserves $I$.

This leads to the following definition. Write the standard basis for
$\mathbb{Z}^{3} \oplus \mathbb{Z}^{n}$ as $\epsilon_{1}, \epsilon_{2},
\epsilon_{3}$ and $e_{p}, p \in \vertices T$. Let $A$ be the $(n+3)
\times 3n$ matrix with columns
\begin{equation}
\label{amatrix} A_{p,\tau_{k}(p)} = \epsilon_{k} + e_{p} +
e_{\tau_{k}(p)} - e_{-\tau_{i}(p)} - e_{-\tau_{j}(p)} \end{equation}
for $p \in \vertices T$, $k=1,2,3$. By construction $I$ is homogeneous
with respect to the multigrading with degree $t_{p,\tau_{k}(p)} =
A_{p,\tau_{k}(p)}$. It follows that  $L \subseteq \ker A$.

For each $p \in \vertices T$, the corresponding row in $A$ has a nice
description. In columns indexed by the $6$ edges having $p$ as vertex,
there is a $+1$. In the $6$ columns corresponding to edges in
$\link(p)$ we have $-1$. The other entries are $0$. Using this one
checks that $\rank A = n+2$.

Let $M= \image A$ and set $M^{\prime}$ to be the lattice
$\mathbb{Z}^{3} \oplus \{(a_{0}, \dots , a_{n-1}) \in \mathbb{Z}^{n} :
\sum a_{i} = 0\}$, the target of $A$.  For any lattice $M$ let $T_{M}=
M^{\ast}$ be the corresponding torus.  Consider the finite character
group $(M^{\prime}/M)^{\ast}$. By standard toric variety theory, see
e.g.  \cite[2.2]{fu:int}, $T_{M} =
T_{M^{\prime}}/(M^{\prime}/M)^{\ast}$ and
$$
\Spec {\mathbb{C}[M \cap \sigma^{\vee}]} = \Spec
{\mathbb{C}[M^{\prime} \cap \sigma^{\vee}]}^{(M^{\prime}/M)^{\ast}}\,
.$$ Thus the normalizations of the main components will all be
isomorphic to $$ \widetilde{S} = (\Spec \mathbb{C}[M \cap
\sigma^{\vee}],0) = (\Spec {\mathbb{C}[M^{\prime} \cap
\sigma^{\vee}]}^{(M^{\prime}/M)^{\ast}}, 0)\, .$$ Our goal is to
describe these combinatorial objects to get an as explicit as possible
description of the main components.

\section{The Stanley-Reisner ideal of $T$} \label{SR-scheme} In this
section let $I=I_{T}$ be the Stanley-Reisner ideal of the equivelar
triangulated torus. The $f$-vector of $T$ is $(n,3n,2n)$, so one may
compute the Hilbert polynomial of $A_{T}$, following \cite{sta:com},
as
$$
h_{A_{T}}(z) = n \binom{z-1}{0} + 3n \binom{z-1}{1} + 2n
\binom{z-1}{2} = nz^{2}\, .$$ This agrees with the Hilbert function
except in degree $0$. In particular one computes that
$$
\dim I_{2} = \binom{n+1}{2} - 4n = \frac{1}{2}n(n-7) \, .$$ The
minimum number of cubic generators on the other hand will depend upon
the combinatorics of $T$.

To compute the number of cubic generators consider first for every
edge $\{p,q\}$ the number $$ l_{p,q} = |\vertices (\link(\{p\},T) \cap
\link(\{q\},T)) \setminus \vertices \link(\{p,q\},T)|$$ which can be
$0$, $1$, $2$ or $3$. By symmetry $l_{p,q}$ will depend only on the
type of $\{p,q\}$, so let $l_{k}$, $k = 1,2,3$, be this common value.
\begin{lemma} \label{cub1} The minimum number of cubic generators of
$I_{T}$ is $\frac{1}{3}n (l_{1}+l_{2}+l_{3})$.
\end{lemma}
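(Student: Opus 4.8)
The plan is to identify the minimal cubic generators of $I_T$ combinatorially, and then count them by a double-counting argument organized by edge type.

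First I would recall that a monomial ideal has a unique minimal monomial generating set, so ``the minimum number of cubic generators'' is simply the number of degree-$3$ minimal non-faces of $T$. Since $T$ contains every vertex, $I_T$ has no linear generators, and its quadratic generators are exactly the non-edges $x_px_q$. A squarefree cubic monomial $x_px_qx_r$ lies in $I_T$ precisely when $\{p,q,r\}\notin T$, and it is a \emph{minimal} generator exactly when it is not divisible by a quadratic generator, i.e. when each of $\{p,q\}$, $\{q,r\}$, $\{p,r\}$ is an edge of $T$. Thus the cubic minimal generators are in bijection with the \emph{empty triangles} of $T$: triples whose three pairs are all edges but which do not bound a $2$-face.

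Next I would reinterpret the local quantity $l_{p,q}$ as the number of empty triangles through the edge $\{p,q\}$. Because $T$ is a closed combinatorial surface, the link of a vertex is a hexagon and the link of an edge $\{p,q\}$ consists of exactly the two apexes $i,j$ of the triangles $\{p,q,i\}$, $\{p,q,j\}$. The vertices of $\link(\{p\},T)\cap\link(\{q\},T)$ are precisely the common neighbours of $p$ and $q$; removing $i$ and $j$ leaves exactly those common neighbours $v$ for which $\{p,v\}$ and $\{q,v\}$ are edges while $\{p,q,v\}$ is not a face, i.e. the third vertices of empty triangles containing $\{p,q\}$. Hence $l_{p,q}$ counts the empty triangles through $\{p,q\}$.

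The count then follows by double-counting incidences between empty triangles and their edges. Each empty triangle has exactly three edges, so summing over all $3n$ edges counts every empty triangle three times: $\sum_{\{p,q\}} l_{p,q} = 3\cdot\#\{\text{empty triangles}\}$. By Proposition \ref{cg} the group $G$ acts by colour-preserving automorphisms of the edge graph, transitively on the $n$ edges of each type, so $l_{p,q}$ depends only on the type $k$ and equals $l_k$; the left-hand sum is therefore $n(l_1+l_2+l_3)$. Dividing by $3$ gives the claimed number of cubic generators. The only genuinely delicate step is the middle one — using the manifold structure to see that the two edge-apexes are exactly the common neighbours that are \emph{not} empty-triangle vertices — while everything else is bookkeeping.
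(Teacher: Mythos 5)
Your proof is correct and takes essentially the same route as the paper: both identify the minimal cubic generators with the ``empty triangles'' (non-face triples all of whose pairs are edges, equivalently $\{p,q\} \in T$ with $r$ a common link vertex of $p$ and $q$ not in $\vertices\link(\{p,q\},T)$), and both conclude by observing that $\sum_{\{p,q\}\in T} l_{p,q} = n(l_1+l_2+l_3)$ counts each such triple exactly three times. You merely make explicit some standard facts the paper leaves implicit, such as the uniqueness of the minimal monomial generating set and the verification that the two apexes of an edge are precisely the common neighbours to be excluded.
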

\begin{proof} A cubic monomial generator in $I_{T}$ corresponds to a
set $\{p,q,r\}$ of vertices which is a non-face, but for which every
subset is an edge. This means exactly that $\{p,q\} \in T$ and $\{r\}
\in \link(\{p\}) \cap \link(\{q\})) \setminus \link(\{p,q\})$. In the
sum $\sum_{\{p,q\} \in T} l_{p,q}$ we have counted a given such
$\{p,q,r\}$ $3$ times.
\end{proof}

The following lemma follows from a simple check.
\begin{lemma} \label{cub2} With the $ijk$-convention, $l_{k}$ is
non-zero if and only if $\tau_{k} = 2\tau_{i}$ or $\tau_{k} =
2\tau_{j}$ or $3 \tau_{k} = 0$.
\end{lemma}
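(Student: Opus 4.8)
The plan is to turn the statement into a count of common neighbours in the edge graph. As in the proof of Lemma~\ref{cub1}, $l_k$ equals the number of vertices $r$ such that $\{p,q,r\}$ is a triangle of the edge graph which is \emph{not} a face of $T$, for a fixed edge $\{p,q\}$ of type $k$; equivalently $r$ runs over the common neighbours of $p$ and $q$ that do not lie in $\link(\{p,q\})$. Fix $p$ and set $q=\tau_k(p)$. By Proposition~\ref{cg} the neighbours of any vertex $v$ are exactly $v\pm\tau_1,\,v\pm\tau_2,\,v\pm\tau_3$, so a common neighbour is an $r=p+\alpha=q+\beta$ with $\alpha,\beta\in\{\pm\tau_1,\pm\tau_2,\pm\tau_3\}$ satisfying $\alpha-\beta=\tau_k$ in $G$. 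Thus the whole problem becomes: count the solutions $(\alpha,\beta)$ of $\alpha-\beta=\tau_k$ whose vertex $r$ is new.

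I would first isolate the two solutions that are always there. From $\tau_i+\tau_j+\tau_k=0$ we get $-\tau_i-\tau_k=\tau_j$ and $-\tau_j-\tau_k=\tau_i$, so $(\alpha,\beta)=(-\tau_i,\tau_j)$ and $(-\tau_j,\tau_i)$ always solve the equation, and they produce $r=-\tau_i(p),\,-\tau_j(p)$, i.e. exactly the two vertices of $\link(\{p,q\})$ read off from the perturbation formula in Section~\ref{ov}. Hence $l_k\neq 0$ iff $\alpha-\beta=\tau_k$ has a \emph{further} solution. For such a solution $\{p,q,r\}$ is an empty triangle, so two of the three edges $\{p,q\},\{p,r\},\{q,r\}$ carry the same type; since $\{p,q\}$ has type $k$ this splits into the generic case, in which $\{p,r\}$ and $\{q,r\}$ share a common type $m$, and the degenerate case, in which one of $\{p,r\},\{q,r\}$ again has type $k$. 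In the generic case $\alpha=\pm\tau_m,\ \beta=\mp\tau_m$, so $\pm2\tau_m=\tau_k$; taking $m=k$ forces $3\tau_k=0$ (the other sign gives $\tau_k=0$), while $m\in\{i,j\}$ forces $2\tau_i=\tau_k$ or $2\tau_j=\tau_k$, the opposite signs being impossible because they would give $\tau_i=\tau_k$ or $\tau_j=\tau_k$, contradicting that $\link(p)$ is a hexagon. This already yields the three relations $\tau_k=2\tau_i$, $\tau_k=2\tau_j$, $3\tau_k=0$.

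The delicate point — and the step I expect to be the real obstacle — is the degenerate case, where $r=p-\tau_k$ or $r=p+2\tau_k$ and membership of $r$ in $\link(q)$ forces a relation of the form $2\tau_k=\pm\tau_m$. Here one must use polyhedrality, in the form $2\tau_m\neq 0$ and $\tau_m\neq\pm\tau_{m'}$ (the hexagon condition of Section~\ref{ov}), to decide exactly which of these coincidences actually occur, to check that the resulting $r$ is genuinely distinct from $p$, $q$ and the two edge-link vertices, and then to match the surviving relations against the three already found by reducing everything via $\tau_i+\tau_j+\tau_k=0$. Carrying out this sign-and-collision bookkeeping carefully is where all the content of the ``simple check'' sits; once it is complete the asserted equivalence follows.
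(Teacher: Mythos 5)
Your reduction of $l_k$ to counting solutions of $\alpha-\beta=\tau_k$ with $\alpha,\beta\in\{\pm\tau_1,\pm\tau_2,\pm\tau_3\}$ is the right framework, and your generic case is handled correctly; but the proposal stops exactly at the point you yourself identify as carrying all the content, and your closing claim — that after the sign-and-collision bookkeeping the surviving relations ``match against the three already found'' — is false. Do the bookkeeping: $\alpha=-\tau_k$ forces $\beta=-2\tau_k$, so one needs $-2\tau_k\in\{\pm\tau_i,\pm\tau_j,\tau_k\}$. The options $-2\tau_k=\tau_i$ and $-2\tau_k=\tau_j$ reduce via $\tau_i+\tau_j+\tau_k=0$ to $\tau_j=\tau_k$, resp.\ $\tau_i=\tau_k$, and are excluded by the hexagon condition, while $-2\tau_k=\tau_k$ is the already-counted $3\tau_k=0$; but $-2\tau_k=-\tau_i$, i.e.\ $\tau_i=2\tau_k$, \emph{survives} and does not reduce to any of $\tau_k=2\tau_i$, $\tau_k=2\tau_j$, $3\tau_k=0$. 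Indeed, when $\tau_i=2\tau_k$ the vertex $r=p-\tau_k=q-\tau_i$ is a common neighbour of $p$ and $q=\tau_k(p)$, distinct from the edge-link vertices $-\tau_i(p),-\tau_j(p)$ because $\tau_k\neq\tau_i,\tau_j$ (and $r'=p+2\tau_k=p+\tau_i$, from $\beta=\tau_k$, is a second one), so $\tau_i=2\tau_k$ or $\tau_j=2\tau_k$ also force $l_k\neq 0$.

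This is not a removable blemish in your write-up: the lemma as printed is itself incomplete, so no completion of your argument can confirm it. Concretely, take $T$ with standard presentation $\begin{pmatrix} 12 & 2\\ 0 & 1\end{pmatrix}$, so $G=\mathbb{Z}_{12}$ and $(\tau_1,\tau_2,\tau_3)=(1,9,2)$; then $\tau_3=2\tau_1$ holds but none of $\tau_1=2\tau_2$, $\tau_1=2\tau_3$, $3\tau_1=0$ do. Yet for the type-$1$ edge $\{0,1\}$ the common neighbours are $\{2,3,10,11\}$ and $\vertices\link(\{0,1\})=\{3,10\}$, so $l_1=2\neq 0$ (both $\{0,1,2\}$ and $\{0,1,11\}$ are empty triangles). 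The correct statement is: $l_k\neq 0$ if and only if $3\tau_k=0$, or $\tau_k=2\tau_m$ or $\tau_m=2\tau_k$ for some $m\in\{i,j\}$. The paper offers no proof beyond ``a simple check'', and its subsequent Proposition is consistent with the corrected version rather than the printed one: the asserted count of $n$ cubic generators for $\begin{pmatrix} n & 2\\ 0 & 1\end{pmatrix}$, $n\geq 10$, requires $(l_1,l_2,l_3)=(2,0,1)$ in the normalization $\tau_3=2\tau_1$, whereas the printed lemma would give $l_1=0$ and hence only $n/3$ cubics; the classification itself is unaffected since it only uses the relabeling-symmetric condition that some $\tau_a=2\tau_b$ or $3\tau_a=0$. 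So your instinct about where the difficulty sits was exactly right — but by deferring that case you missed that carrying it out overturns, rather than confirms, the equivalence as stated.
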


\begin{proposition} The ideal $I_{T}$ is generated by quadratic and
cubic monomials. The minimum number of quadratic generators is
$\frac{1}{2}n(n-7)$.

Up to isomorphism, the $T$ which need cubic generators for $I_{T}$
have one of the following standard presentations:
$$
\begin{pmatrix} n & 2\\ 0 & 1
\end{pmatrix}, \,
\begin{pmatrix} 3 & 0\\ 0 & \frac{n}{3}
\end{pmatrix},\,
\begin{pmatrix} 3 & 1\\ 0 & \frac{n}{3}
\end{pmatrix}, \,
\begin{pmatrix} 3 & 2\\ 0 & \frac{n}{3}
\end{pmatrix} \, .$$ If $T$ is presented by $
\begin{pmatrix} n & 2\\ 0 & 1
\end{pmatrix}$, then the minimum $$ \# \text{cubic generators} =
\begin{cases} 21 \quad &\text{if $n=7$} \\ 16 \quad &\text{if $n=8$}
\\ n \quad &\text{if $n \ge 9$.}
\end{cases}$$ In the three other cases the minimum $$ \# \text{cubic
generators} =
\begin{cases} 9 \quad &\text{if $n=9$} \\ \frac{1}{3}n \quad &\text{if
$n\ge 10$.}
\end{cases}$$
\end{proposition}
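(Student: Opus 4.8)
First I would dispose of the degree and quadratic statements, which are soft. Since $T$ is a combinatorial $2$-manifold the link of every vertex is a circle, and a circle contains no $3$-clique; hence no four vertices can have all of their triples as faces, so every minimal non-face has $2$ or $3$ elements and $I_T$ is generated in degrees $2$ and $3$. The degree-$2$ part of $I_T$ is spanned by the squarefree monomials $x_px_q$ with $\{p,q\}$ a non-edge, and these are automatically minimal generators because there are no linear elements in $I_T$ (every vertex is a face). As $T$ has $\binom n2$ potential and $3n$ actual edges, the number of quadratic generators is $\binom n2-3n=\tfrac12 n(n-7)$, which is the value already computed for $\dim I_2$.

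For the cubics I would reduce, via Lemma~\ref{cub1}, to computing $\tfrac13 n(l_1+l_2+l_3)$, and control the $l_k$ through Lemma~\ref{cub2}. The geometric point I would isolate is that an empty triangle must carry two edges of the same type $k$, so it has the shape $\{p-\tau_k,p,p+\tau_k\}$ and can close up only if $2\tau_k\in\{\pm\tau_1,\pm\tau_2,\pm\tau_3\}$ in $G$. Polyhedrality excludes $2\tau_k=-\tau_m$ (it forces $\tau_l=\tau_k$ for the third index $l$, using $\tau_1+\tau_2+\tau_3=0$), leaving exactly the \emph{doubling} relations $2\tau_k=\tau_m$ and the \emph{tripling} $3\tau_k=0$. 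Each is the condition that $\Gamma$ contain one specific primitive vector: in the lattice basis where $\tau_1=(1,0)$, $\tau_2=(-1,1)$, $\tau_3=(0,-1)$, tripling means $3\tau_k\in\Gamma$ and doubling means $2\tau_k-\tau_m\in\Gamma$ (for instance $2\tau_1-\tau_3=(2,1)$).

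I would then classify the sublattices $\Gamma$ admitting such a relation up to the $D_6\times\GL_2(\mathbb Z)$ equivalence of \cite{bk:equ}; since $D_6$ permutes the three types it suffices to normalize one relation. If $3\tau_1=0$ then $(3,0)\in\Gamma$, and column-reducing $M=\left(\begin{smallmatrix}a&b\\0&c\end{smallmatrix}\right)$ forces $a\mid 3$; polyhedrality rules out $a=1$, so $a=3$, $0\le b<3$, $c=n/3$, giving the three forms with $b=0,1,2$. If instead $2\tau_1=\tau_3$ then the primitive vector $(2,1)$ lies in $\Gamma$, so $\Gamma=\langle(2,1),w\rangle$ with $w$ of index $n$, and a $\GL_2(\mathbb Z)$ change of basis brings this to $\left(\begin{smallmatrix}n&2\\0&1\end{smallmatrix}\right)$. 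Conversely each listed form satisfies the stated relation, so these four are precisely the isomorphism types needing cubic generators.

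Finally I would count. For a single doubling $2\tau_k=\tau_m$ the empty triangles are the $\tau_k$-arithmetic progressions $\{p,p+\tau_k,p+2\tau_k\}$, giving $n$ distinct triangles when $\tau_k$ has large order; for a single tripling the progression wraps after three steps, giving $n/3$. This yields $n$ cubics for the doubling form and $n/3$ for the tripling forms once $n$ is large. The hard part, and the real content of the small-$n$ clauses, is that for small $n$ several relations hold at once: at $n=7$ all three doublings occur, the $1$-skeleton is $K_7$, and all $\binom73-2n=21$ non-faces become empty triangles; at $n=8$ two doublings coexist and their disjoint progression families give $16$; at $n=9$ the tripling case specializes to $G\cong(\mathbb Z/3)^2$ where all three triplings hold, giving $9$. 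For each such $n$ I would list exactly which relations hold in $G$, count the resulting progression triples by inclusion–exclusion, and verify that the distinct families neither coincide nor accidentally form genuine faces. The main obstacle is precisely this bookkeeping of coexisting relations for small $n$, which is also where one must take care that the $l_k$ extracted from Lemma~\ref{cub2} are assigned their correct multiplicities.
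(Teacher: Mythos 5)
Your overall route coincides with the paper's: reduce to Lemma \ref{cub1}, detect nonvanishing $l_k$ via the relations $2\tau_k=\tau_m$ and $3\tau_k=0$ (Lemma \ref{cub2}), classify the $\Gamma$ admitting such a relation in standard presentation, and count. Several of your details are actually more self-contained than the published proof: the link-of-a-vertex argument for generation in degrees $2$ and $3$, the direct count of non-edges for the quadratic part, the polyhedrality exclusion of $2\tau_k=-\tau_m$, and especially the observation that the index-$n$ sublattice containing the primitive vector $(2,1)$ is unique outright, where the paper instead invokes \cite[Proposition 3]{bk:equ}. Counting empty triangles as progression families, rather than extracting the exact values of the $l_k$, is also the more robust bookkeeping: note for instance that with $\tau_3=2\tau_1$ one gets $l_1=2$ and $l_3=1$, which does not match the literal indexing of Lemma \ref{cub2}, and your triangle families sidestep this.

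The genuine gap is at $n=9$, exactly the bookkeeping you defer to the end. First, $G\cong(\mathbb{Z}/3)^2$ holds only for $\left(\begin{smallmatrix}3&0\\0&3\end{smallmatrix}\right)$; for $b=1,2$ one gets $G\cong\mathbb{Z}_9$ with a \emph{single} tripling and a coexisting doubling: for $b=1$, writing $G=\langle u\rangle\cong\mathbb{Z}_9$ one finds $\tau_1=6u$, $\tau_2=4u$, $\tau_3=8u$, so $3\tau_1=0$ \emph{and} $2\tau_2=\tau_3$, and your own counting rules then give $9+3=12$ empty triangles, not $9$. Second, for the doubling presentation at $n=9$ you apply the generic count $n$, but there $\tau_2=-3\tau_1$ has order $3$, so the tripling $3\tau_2=0$ coexists with $\tau_3=2\tau_1$: concretely the edge graph is $C_9(1,2,3)$ and the triple $\{0,3,6\}$ has all pairwise differences equal to $3=-\tau_2$, hence is an empty triangle, and the family $\{m,m+3,m+6\}$ contributes $3$ further cubic generators beyond the $9$ progressions $\{m,m+1,m+2\}$. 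So your method, faithfully executed, yields $12$ for three of the four $n=9$ presentations, in conflict with the value $9$ you set out to prove; your outline neither derives the stated numbers nor notices the conflict (the paper's own proof, which says only ``count the $l_k$ in each case,'' is equally silent here, but in a blind proof the burden falls on you). Until you list exactly which relations hold for each presentation at $n=7,8,9$ — your $n=7$ and $n=8$ analyses are correct — and reconcile the $n=9$ count, the small-$n$ clauses of the proposition are not established.
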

\begin{proof} The only Stanley-Reisner ideal of a 2-dimensional
combinatorial manifold that needs quartic generators is the boundary
of the tetrahedron.

By Lemma \ref{cub1} and Lemma \ref{cub2} there will no cubic
generators unless some $\tau_{k} = 2\tau_{i}$ or some $3\tau_{k} =
0$. In the first case, after a $D_{6}$ movement (see
Section~\ref{about36}) we may assume $\tau_{3} = 2\tau_{1}$. In the
standard presentation this means that $(2, 1) \in \Gamma$. One checks
using e.g. \cite[Proposition 3]{bk:equ} that, for each $n$, there is
only one isomorphism class with this property and that it is
represented by the first matrix in the list. In the second case we may
assume $3\tau_{1}= 0$ and again check possibilities.

To get the number just count the $l_{k}$ in each case and use Lemma
\ref{cub1}.
\end{proof}

\section{Analysis} \label{analysis}

\subsection{The group $G$ and its principal translations} Before
proceeding with our analysis of the components I state some facts
about $G = \mathbb{T}/\Gamma$ and the $\tau_{k}$.  The proof of the
first lemma is an exercise in elementary abelian group theory.
\begin{lemma} \label{qgen} There are the following relationships
involving the principal translations.
\begin{list}{\textup{(\roman{temp})}}{\usecounter{temp}}
\item The quotient group $G/\langle \tau_{k} \rangle$ is cyclic and
the classes of $\tau_{i}$ and $\tau_{j}$ are both generators. In
particular $\lvert \tau_{i} \rvert \lvert \tau_{j} \rvert/n$ is an
integer for all $i \ne j$ in $\{1,2,3\}$.
\item Let $[g]_{i} \subseteq G$ be the coset of $\langle \tau_{i}
\rangle$ containing $g$. For any $g,h \in G$,
$$
\lvert [g]_{i} \cap [h]_{j} \rvert = \frac{\lvert \tau_{i} \rvert
\lvert \tau_{j} \rvert}{n} \, .$$
\item The number
$$
\gcd\left(\frac{n}{\lvert \tau_{i} \rvert}, \frac{n}{\lvert \tau_{j}
\rvert}\right) = \frac{n}{\lcm \left(\lvert \tau_{i} \rvert,\lvert
\tau_{j} \rvert\right)}$$ is the same for all $i \ne j$.
\end{list}
\end{lemma}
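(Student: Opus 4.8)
The plan is to treat Lemma~\ref{qgen} as a sequence of elementary facts about the finite abelian group $G$ and its three distinguished elements, exploiting throughout that \emph{any two} of $\tau_{1},\tau_{2},\tau_{3}$ generate $G$ (Proposition~\ref{cg}) together with the relation $\tau_{1}+\tau_{2}+\tau_{3}=0$. For part~(i) I would first pass to the quotient $G/\langle\tau_{k}\rangle$. The images $\bar\tau_{i},\bar\tau_{j}$ generate this quotient, but reducing $\tau_{i}+\tau_{j}+\tau_{k}=0$ gives $\bar\tau_{i}+\bar\tau_{j}=0$, so $\bar\tau_{i}=-\bar\tau_{j}$ and a single element already generates; this proves $G/\langle\tau_{k}\rangle$ is cyclic and that each of $\bar\tau_{i},\bar\tau_{j}$ is a generator. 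I would then read off orders: the cyclic group $G/\langle\tau_{k}\rangle$ has order $n/\lvert\tau_{k}\rvert$, and since $\bar\tau_{i}$ generates it, $n/\lvert\tau_{k}\rvert$ equals the order of $\bar\tau_{i}$, which divides $\lvert\tau_{i}\rvert$. Hence $\lvert\tau_{i}\rvert\lvert\tau_{k}\rvert/n\in\mathbb{Z}$, and likewise $\lvert\tau_{j}\rvert\lvert\tau_{k}\rvert/n\in\mathbb{Z}$; letting $k$ range over $\{1,2,3\}$ covers all three pairs.

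For part~(ii) I would use the standard fact that the intersection of a coset of $\langle\tau_{i}\rangle$ with a coset of $\langle\tau_{j}\rangle$ is either empty or a single coset of $\langle\tau_{i}\rangle\cap\langle\tau_{j}\rangle$. The key point is that it is never empty here: because $\langle\tau_{i}\rangle+\langle\tau_{j}\rangle=G$, for any $g,h$ the element $g-h$ lies in $\langle\tau_{i}\rangle+\langle\tau_{j}\rangle$, so the two cosets meet. Therefore $\lvert[g]_{i}\cap[h]_{j}\rvert=\lvert\langle\tau_{i}\rangle\cap\langle\tau_{j}\rangle\rvert$, and the product formula $\lvert H_{1}\cap H_{2}\rvert\,\lvert H_{1}+H_{2}\rvert=\lvert H_{1}\rvert\,\lvert H_{2}\rvert$ (second isomorphism theorem), applied with $H_{1}+H_{2}=G$, yields the claimed value $\lvert\tau_{i}\rvert\lvert\tau_{j}\rvert/n$.

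For part~(iii) I would separate the number-theoretic identity from the genuine content. The identity $\gcd(n/a,n/b)=n/\lcm(a,b)$ holds whenever $a,b$ divide $n$, which I would check prime by prime using $v_{p}(a),v_{p}(b)\le v_{p}(n)$; here $a=\lvert\tau_{i}\rvert$ and $b=\lvert\tau_{j}\rvert$ divide $n$ by Lagrange. For the invariance I would observe that in an abelian group the exponent of a subgroup generated by a set of elements is the $\lcm$ of their orders. Applying this to $G=\langle\tau_{i},\tau_{j}\rangle$ gives $\lcm(\lvert\tau_{i}\rvert,\lvert\tau_{j}\rvert)=\exp(G)=d_{2}$, the larger elementary divisor in $G\cong\mathbb{Z}_{d_{1}}\oplus\mathbb{Z}_{d_{2}}$ with $d_{1}\mid d_{2}$. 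Since $d_{2}$ is an invariant of $G$ and does not depend on the chosen pair, $n/\lcm(\lvert\tau_{i}\rvert,\lvert\tau_{j}\rvert)=d_{1}$ is the same for all $i\ne j$.

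The only step that is not pure bookkeeping is the invariance in~(iii): the observation that the $\lcm$ of the orders of a generating pair is forced to be the exponent of $G$, and hence cannot depend on which generating pair we choose. The coset, divisibility, and intersection computations in~(i) and~(ii) are routine once the generation property and the relation $\tau_{1}+\tau_{2}+\tau_{3}=0$ are in hand.
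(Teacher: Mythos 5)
Your proof is correct. The paper itself supplies no argument for Lemma~\ref{qgen}, remarking only that the proof ``is an exercise in elementary abelian group theory,'' and your three steps --- the relation $\bar\tau_{i} = -\bar\tau_{j}$ in $G/\langle \tau_{k}\rangle$ for (i), the coset-intersection count via the second isomorphism theorem applied to $\langle\tau_{i}\rangle + \langle\tau_{j}\rangle = G$ for (ii), and the identification $\lcm(\lvert\tau_{i}\rvert, \lvert\tau_{j}\rvert) = \exp(G) = d_{2}$ for (iii) --- constitute exactly that intended exercise; note moreover that the invariant $n/\exp(G) = d_{1}$ you extract in (iii) is the same $d$ that Proposition~\ref{abg} later identifies as the first elementary divisor of $G$, so your argument dovetails with the paper's subsequent use of the lemma.
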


I will vary between two presentations of $G$. First there is the
\emph{standard presentation} which is the presentation in
Section~\ref{about36}. Here $G$ is a quotient of $\mathbb{T} \simeq
\mathbb{Z}^{2}$ with basis $\tau_{1}, -\tau_{3}$ by the image of
 $$
\begin{pmatrix} a & b\\ 0 & c
\end{pmatrix} \, .$$

Then we may take the \emph{symmetric presentation} where we think of
$G$ as the quotient of $\mathbb{Z}^{3}$ with basis $\tau_{1},
\tau_{2}, \tau_{3}$ by the image of a matrix
$$
R =
\begin{pmatrix} 1 & \alpha_{1,1} & \alpha_{2,1} \\ 1 & \alpha_{1,2} &
\alpha_{2,2} \\ 1 & \alpha_{1,3} & \alpha_{2,3}
\end{pmatrix} \, . $$ Of course the standard presentation is the
symmetric with
$$
R =
\begin{pmatrix} 1 & a & b \\ 1 & 0 & 0 \\ 1 & 0 & -c
\end{pmatrix} \, . $$
\begin{proposition} \label{orders} With the symmetric presentation and
the $ijk$-convention the orders of the principal translations are $$
\lvert \tau_{k} \rvert = \frac{n}{\gcd(\alpha_{1,i} - \alpha_{1,j},
\alpha_{2,i} - \alpha_{2,j})} \, .$$
\end{proposition}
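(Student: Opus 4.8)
The plan is to compute the order of $\tau_k$ through the index formula $|\tau_k| = n/|G/\langle \tau_k \rangle|$, which holds by Lagrange's theorem since $|G| = n$. By Lemma \ref{qgen}(i) the quotient $G/\langle \tau_k \rangle$ is cyclic with the class of $\tau_i$ as a generator, so it suffices to show this cyclic group has order exactly $\gcd(\alpha_{1,i} - \alpha_{1,j}, \alpha_{2,i} - \alpha_{2,j})$.

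I would work directly from the symmetric presentation $G = \mathbb{Z}^{3}/\image(R)$, where $\mathbb{Z}^{3}$ has basis $\tau_{1},\tau_{2},\tau_{3}$ and the three columns of $R$ are the defining relations. Forming $G/\langle \tau_{k}\rangle$ amounts to adjoining the relation $\tau_{k}=0$, i.e. replacing the relation lattice by $\image(R) + \mathbb{Z}e_{k}$. The first column of $R$ encodes $\tau_{1}+\tau_{2}+\tau_{3}=0$; after setting $\tau_{k}=0$ it reads $\tau_{i}+\tau_{j}=0$, so $\tau_{j}=-\tau_{i}$ and the quotient is generated by the single element $\tau_{i}$ (this recovers the cyclicity asserted in Lemma \ref{qgen}(i)).

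Substituting $\tau_{k}=0$ and $\tau_{j}=-\tau_{i}$ into the two remaining columns of $R$ turns them into the scalar relations $(\alpha_{1,i}-\alpha_{1,j})\tau_{i}=0$ and $(\alpha_{2,i}-\alpha_{2,j})\tau_{i}=0$. Hence $G/\langle \tau_{k}\rangle \cong \mathbb{Z}/m\mathbb{Z}$ with $m = \gcd(\alpha_{1,i}-\alpha_{1,j},\,\alpha_{2,i}-\alpha_{2,j})$, the gcd being well defined and strictly positive because $G$ is finite (were both differences zero the quotient would be infinite cyclic). Combining with the index formula yields $|\tau_{k}| = n/m$, exactly the asserted value; note it is independent of which of the two remaining indices is labelled $i$ and which $j$, since the gcd is insensitive to an overall sign change of its arguments.

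There is no serious obstacle here: the argument is a short manipulation of the presentation matrix, and the main thing to get right is bookkeeping, namely performing the substitution $\tau_{j}=-\tau_{i}$ correctly in the two nontrivial columns while keeping the $ijk$-convention straight, together with the observation that positivity of $m$ is precisely where finiteness of $G$ enters. One could instead compute $|\tau_{k}|$ directly as the least $d>0$ with $d\,e_{k}\in\image(R)$ via Smith normal form, but routing through the cyclic quotient $G/\langle \tau_{k}\rangle$ and Lemma \ref{qgen} is cleaner and avoids an explicit normal-form calculation.
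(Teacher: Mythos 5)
Your proof is correct, but it takes a genuinely different route from the paper. The paper characterizes $\lvert \tau_{k} \rvert$ as the least positive $m$ with $m\varepsilon_{k} \in \image R$ and solves this by Cramer's rule, which yields the congruences $m(\alpha_{1,i} - \alpha_{1,j}) \equiv m(\alpha_{2,i} - \alpha_{2,j}) \equiv 0 \bmod n$ and hence a priori $\lvert \tau_{k} \rvert = n/\gcd(n, \alpha_{1,i}-\alpha_{1,j}, \alpha_{2,i}-\alpha_{2,j})$; to remove the $n$ from the gcd the paper needs the separate observation, extracted by computing $\det R = n$ three different ways, that $\gcd(\alpha_{1,i}-\alpha_{1,j}, \alpha_{2,i}-\alpha_{2,j})$ divides $n$. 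You instead compute the \emph{quotient} $G/\langle \tau_{k} \rangle$ directly by adjoining the relation $\tau_{k}=0$ to the presentation, eliminating $\tau_{j} = -\tau_{i}$ via the column $(1,1,1)^{T}$, and reading off $G/\langle \tau_{k}\rangle \cong \mathbb{Z}/m\mathbb{Z}$ with $m = \gcd(\alpha_{1,i}-\alpha_{1,j},\, \alpha_{2,i}-\alpha_{2,j})$, then dividing orders by Lagrange. This dualizes the computation (quotient group rather than the subgroup $\langle\tau_{k}\rangle$ itself) and buys two things: the divisibility $m \mid n$ falls out automatically from Lagrange, so no determinant manipulation is needed, and the cyclicity statement of Lemma \ref{qgen}(i) is re-derived along the way rather than merely quoted (your appeal to that lemma is in fact redundant, as your own reduction establishes it). You also correctly flag where finiteness of $G$ enters, namely in ruling out $m=0$, and why the formula is symmetric in $i$ and $j$. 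Both arguments are elementary manipulations of the same matrix $R$, but yours is self-contained where the paper's splits the work between Cramer's rule and a divisibility check.
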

\begin{proof} The determinant of $R$ is $n$. Computing it three
different ways one sees that $\gcd(\alpha_{1,i} - \alpha_{1,j},
\alpha_{2,i} - \alpha_{2,j}) | n$ for all $i \ne j$. Now $\lvert
\tau_{k} \rvert$ is the least positive $m$ with $m \varepsilon_{k} \in
\image R$.  Using Cramers rule, this is the least positive $m$ with
$m(\alpha_{1,i} - \alpha_{1,j}) \equiv m(\alpha_{2,i} - \alpha_{2,j})
\equiv 0 \mod n$. The result now follows.
\end{proof}

Here is the the abstract structure of $G$.
\begin{proposition} \label{abg} If $$ d = \gcd\left(\frac{n}{\lvert
\tau_{i} \rvert}, \frac{n}{\lvert \tau_{j} \rvert}\right)$$ for $i \ne
j$, then the elementary divisors on $G$ are $(d,n/d)$. In particular
 $$
G \simeq \mathbb{Z}_{d} \times \mathbb{Z}_{\frac{n}{d}}$$ and $G$ is
cyclic if and only if $d=1$.
\end{proposition}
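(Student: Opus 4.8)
The plan is to exploit that $G$, being the quotient of $\mathbb{T} \simeq \mathbb{Z}^{2}$ by a rank $2$ sublattice $\Gamma$ of index $n$, is a finite abelian group that can be generated by at most two elements. Hence its invariant factor decomposition has length at most two: $G \simeq \mathbb{Z}_{d_{1}} \oplus \mathbb{Z}_{d_{2}}$ with $d_{1} \mid d_{2}$ and $d_{1} d_{2} = |G| = n$, and the pair $(d_{1}, d_{2})$ is by definition the list of elementary divisors. It therefore suffices to identify $d_{1}$ and $d_{2}$; since $d_{1} = n/d_{2}$, it is enough to pin down the exponent $d_{2}$ of $G$.

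First I would recall the standard fact that for a finite abelian group the exponent (the maximal order of an element) equals the least common multiple of the orders of the members of any generating set: every element is a $\mathbb{Z}$-combination of the generators, so its order divides the $\lcm$ of their orders, while each generator's order divides the exponent. By the discussion preceding Proposition~\ref{cg}, any pair $\tau_{i}, \tau_{j}$ with $i \ne j$ generates $G$, so
$$ d_{2} = \operatorname{exponent}(G) = \lcm(|\tau_{i}|, |\tau_{j}|), $$
and consequently $d_{1} = n/d_{2} = n/\lcm(|\tau_{i}|, |\tau_{j}|)$.

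Finally I would invoke the arithmetic identity recorded in Lemma~\ref{qgen}(iii), namely $\gcd\!\left(n/|\tau_{i}|, n/|\tau_{j}|\right) = n/\lcm(|\tau_{i}|, |\tau_{j}|)$, which that lemma also guarantees is independent of the chosen pair. This yields $d_{1} = \gcd\!\left(n/|\tau_{i}|, n/|\tau_{j}|\right) = d$ and $d_{2} = n/d$, so the elementary divisors are exactly $(d, n/d)$ and $G \simeq \mathbb{Z}_{d} \times \mathbb{Z}_{n/d}$; the constraint $d_{1} \mid d_{2}$ coming from the invariant factor decomposition automatically delivers $d \mid (n/d)$, and $G$ is cyclic precisely when $d_{1} = d = 1$.

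I expect no serious obstacle here; the only points needing care are (a) justifying that $G$ is $2$-generated, so that the decomposition really has length two, and (b) the observation that the exponent of an abelian group is the $\lcm$ of the orders of a generating set, so that knowing only the two orders $|\tau_{i}|, |\tau_{j}|$ determines $d_{2}$. If one preferred a purely computational route, one could instead read the invariant factors off the Smith normal form of the standard presentation matrix $\begin{pmatrix} a & b \\ 0 & c \end{pmatrix}$, whose invariant factors are $\gcd(a,b,c)$ and $ac/\gcd(a,b,c)$, and then match $\gcd(a,b,c)$ with $d$ using the order formula of Proposition~\ref{orders}; but the exponent argument is shorter and avoids the extra case analysis.
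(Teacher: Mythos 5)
Your proof is correct, but it takes a genuinely different route from the paper. The paper argues computationally from the standard presentation: it records $n = ac$, reads off $n/\lvert\tau_{1}\rvert = c$, $n/\lvert\tau_{2}\rvert = \gcd(a,b+c)$, $n/\lvert\tau_{3}\rvert = \gcd(a,b)$ from Proposition~\ref{orders}, implicitly takes the Smith normal form of $\bigl(\begin{smallmatrix} a & b\\ 0 & c\end{smallmatrix}\bigr)$ to get invariant factors $\gcd(a,b,c)$ and $ac/\gcd(a,b,c)$, and then checks that $d = \gcd(a,b,c)$ --- i.e.\ the paper's proof is exactly the ``purely computational route'' you sketch in your final paragraph, including the identity $\gcd(c,\gcd(a,b+c)) = \gcd(a,b,c)$. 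Your main argument is instead coordinate-free: since $G$ is a quotient of $\mathbb{Z}^{2}$ it is $2$-generated, so $G \simeq \mathbb{Z}_{d_{1}} \oplus \mathbb{Z}_{d_{2}}$ with $d_{1} \mid d_{2}$ and $d_{1}d_{2} = n$, and the exponent $d_{2}$ equals $\lcm(\lvert\tau_{i}\rvert, \lvert\tau_{j}\rvert)$ because any pair of principal translations generates $G$; Lemma~\ref{qgen}(iii) then converts $n/\lcm$ into the stated $\gcd$. Both key inputs you flag are sound: the exponent of a finite abelian group is indeed the $\lcm$ of the orders of any generating set, and $\gcd(n/a, n/b) = n/\lcm(a,b)$ holds for any divisors $a,b$ of $n$. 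What your approach buys is that it needs no choice of presentation and no case-checking, and as a bonus it re-derives the pair-independence of $d$ intrinsically (the exponent does not depend on which generating pair one takes), whereas the paper's verification is shorter once Proposition~\ref{orders} is in hand but is tied to the specific matrix form.
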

\begin{proof} These invariants may be computed from the standard
presentation. We have $n = ac$, $n/\lvert \tau_{1} \rvert = c$,
$n/\lvert \tau_{2} \rvert = \gcd(a,b+c)$ and $n/\lvert \tau_{3} \rvert
= \gcd(a,b)$ (see Proposition \ref{orders}). Thus the $d$ in the
statement equals $\gcd(a,b,c)$ as it should.
\end{proof}

\subsection{$\Sat L$ and the number of main components} Let $B$ be the
$3n \times 3n$ matrix with columns the exponent vectors (\ref{expv})
of $I$. As explained in Section~\ref{ov}, $\Sat L = \ker A$. Thus
there is a commutative diagram
$$
\begin{CD} 0 @>>> L @>>> \mathbb{Z}^{3n} @>>> \Coker B @>>> 0 \\
@. @VVV @VV=V @VVV @.\\ 0 @>>> \Sat L @>>> \mathbb{Z}^{3n} @>A>> M
@>>> 0
\end{CD}$$ and the Snake Lemma yields an exact sequence
$$
0 \to \Sat L /L \to \Coker B \xrightarrow{A} M \to 0 \, .  $$ Let $d$
be as in Proposition \ref{abg}.

\begin{proposition} There is an isomorphism $\Coker B \simeq F \oplus
\mathbb{Z}_{d}$ where $F$ is free of rank $n+2$. In particular $$ \Sat
L /L \simeq \mathbb{Z}_{d}\, .$$
\end{proposition}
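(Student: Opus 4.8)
The free rank is immediate: since $\rank L = 2(n-1)$ we get $\rank \Coker B = 3n - 2(n-1) = n+2$, so certainly $\Coker B \simeq F \oplus \text{(torsion)}$ with $F$ free of rank $n+2$. The second assertion will then be free of charge: in the Snake Lemma sequence $0 \to \Sat L/L \to \Coker B \to M \to 0$ the group $M = \image A$ is free, so the torsion subgroup of $\Coker B$ maps to $0$ in $M$ and hence lands in $\Sat L/L$; as $\Sat L/L$ is finite it equals the torsion subgroup. Thus the entire content is to show $\operatorname{tors}(\Coker B) \simeq \mathbb{Z}_{d}$, and I plan to get at this by computing $\Coker B$ directly as a $\mathbb{Z}[G]$-module.

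The first step is to rewrite $B$ over the group ring $R = \mathbb{Z}[G]$. Using Proposition \ref{cg} I identify $\vertices T$ with $G$ and the edges with $G \times \{1,2,3\}$, so that $\mathbb{Z}^{3n} = R^{3}$. Rewriting the exponent vectors (\ref{expv}) in these terms presents $B$ as a $3\times 3$ matrix over $R$ with zero diagonal and off-diagonal entries $\pm D_{k}$, where $D_{k} = 1 - [\tau_{k}] \in R$. The column relation $f_{1,p}+f_{2,p}+f_{3,p}=0$ says the three columns sum to zero, so one may be deleted, leaving the $R$-module presentation
$$\Coker B \simeq \langle\, v_{1},v_{2},v_{3} \mid D_{1}v_{1}=D_{2}v_{2}=D_{3}v_{3}\,\rangle .$$
Now I split off the free part. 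Put $w := D_{1}v_{1}=D_{2}v_{2}=D_{3}v_{3}$. Killing $Rw$ forces $D_{k}v_{k}=0$, so $\Coker B/Rw \simeq \bigoplus_{k} R/(D_{k})$; and since $H_{k}=\langle\tau_{k}\rangle$ is cyclic, $(D_{k})$ is the kernel of $R \to \mathbb{Z}[G/H_{k}]$, whence $R/(D_{k}) \simeq \mathbb{Z}[G/H_{k}]$ is free abelian of rank $n/\lvert\tau_{k}\rvert$. Therefore $\Coker B/Rw$ is free and $\operatorname{tors}(\Coker B) = \operatorname{tors}(Rw)$.

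A short annihilator computation now finishes the reduction. Using $\Ann_{R}(D_{k})=(\nu_{k})$, where $\nu_{k}=\sum_{h\in H_{k}}[h]$ is the norm of $H_{k}$, one checks that $\Ann_{R}(w)=(\nu_{1},\nu_{2},\nu_{3})$, so
$$Rw \simeq R/(\nu_{1},\nu_{2},\nu_{3}) = R\big/\big(R^{H_{1}}+R^{H_{2}}+R^{H_{3}}\big),$$
where I use that $R\nu_{k}=R^{H_{k}}$ is exactly the subgroup of elements constant on $H_{k}$-cosets. Thus everything comes down to $\operatorname{tors}\big(R/(R^{H_{1}}+R^{H_{2}}+R^{H_{3}})\big)$.

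The identification of this torsion as $\mathbb{Z}_{d}$ is the crux, and the step I expect to be the main obstacle. The structural input is that each $R^{H_{k}}$ is saturated in $R$ and that, since any two of $\tau_{1},\tau_{2},\tau_{3}$ generate $G$ (Proposition \ref{cg}), all pairwise and triple intersections of the $R^{H_{k}}$ equal the rank-one lattice $\mathbb{Z}\nu_{G}$ spanned by the primitive full norm $\nu_{G}=\sum_{g\in G}[g]$. Passing to $\bar R = R/\mathbb{Z}\nu_{G}$ therefore puts the images $\bar U_{k}=R^{H_{k}}/\mathbb{Z}\nu_{G}$ in direct-sum position, and the torsion sought is precisely the failure of $\bar U_{1}\oplus\bar U_{2}\oplus\bar U_{3}$ to be saturated in $\bar R$. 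This is where the elementary-divisor structure $G\simeq \mathbb{Z}_{d}\times\mathbb{Z}_{n/d}$ of Proposition \ref{abg} and the order formulas of Lemma \ref{qgen} must be fed in; I plan to finish by a direct lattice-index computation in the standard presentation, with $d=\gcd(a,b,c)$, verifying that the saturation meets $\bar U_{1}\oplus\bar U_{2}\oplus\bar U_{3}$ in a single cyclic quotient of order $d$. (The mechanism is already visible for $G=\mathbb{Z}_{d}\times\mathbb{Z}_{d}$, where the quotient is cyclic of order exactly $d$.) The difficulty is entirely integral: over $\mathbb{Q}$ the picture is transparent — inclusion–exclusion over the character sets trivial on each $H_{k}$, which pairwise meet only in the trivial character, gives free rank $n-\sum_{k}n/\lvert\tau_{k}\rvert+2 = n+2$ — but the cyclic-of-order-$d$ torsion disappears after $\otimes\,\mathbb{Q}$ and must be extracted by hand.
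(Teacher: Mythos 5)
Your reductions are correct, and they take a genuinely different route from the paper: where the paper dualizes immediately and computes the character group $(\Coker B)^{\ast}$ via two explicit projections, you stay integral, present $\Coker B$ over $R=\mathbb{Z}[G]$ as $\langle v_{1},v_{2},v_{3} \mid D_{1}v_{1}=D_{2}v_{2}=D_{3}v_{3}\rangle$ (the relations $\sum_{p}f_{k,p}=0$ being automatic since $\nu_{G}D_{k}=0$), split off the free quotient $\bigoplus_{k}R/(D_{k})\simeq\bigoplus_{k}\mathbb{Z}[G/H_{k}]$, and correctly compute $\Ann_{R}(w)=(\nu_{1},\nu_{2},\nu_{3})$, so that $\operatorname{tors}(\Coker B)\simeq \operatorname{tors}\bigl(R/(R^{H_{1}}+R^{H_{2}}+R^{H_{3}})\bigr)$. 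I verified each of these steps; this is a clean structural reduction the paper does not make explicit.

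However, the proposal then stops exactly where the content of the proposition lies: the identification of this torsion with $\mathbb{Z}_{d}$ is announced as ``a direct lattice-index computation'' to be done later, and no such computation is given. Note that nothing in your argument so far has used the specific structure of $G$ or the orders $\lvert\tau_{k}\rvert$ — the invariant $d$ has not yet entered — so the entire arithmetic burden of the proposition is concentrated in the missing step, and it is not routine: it is where Lemma \ref{qgen} and the elementary-divisor structure of Proposition \ref{abg} must be fed in, and the paper spends essentially its whole proof on precisely this accounting (dualizing to get conditions $\prod_{p\in O}\lambda_{p}/\lambda_{\tau_{3}(p)}=1$ over orbits, using transitivity of $\tau_{3}$ on the $\tau_{1}$- and $\tau_{2}$-orbit partitions, and extracting $\mathbb{Z}_{d}$ from the single relation $\mu_{1}^{n/\lvert\tau_{1}\rvert}=\mu_{2}^{n/\lvert\tau_{2}\rvert}$ via $d=\gcd(n/\lvert\tau_{1}\rvert,n/\lvert\tau_{2}\rvert)$). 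A workable way to finish from where you stand is to apply $\Hom(-,\mathbb{C}^{\ast})$ to your $R/(R^{H_{1}}+R^{H_{2}}+R^{H_{3}})$: its character group is the subgroup of $({\mathbb{C}^{\ast}})^{n}$ of $\lambda$ with $\prod_{g\in C}\lambda_{g}=1$ for every $H_{k}$-coset $C$, $k=1,2,3$, whose component group is the torsion you seek, and the analysis then parallels the paper's $\phi$-step. Two smaller points: your parenthetical ``$n-\sum_{k}n/\lvert\tau_{k}\rvert+2=n+2$'' is wrong as written — the left side is the rank of $Rw$, and $n+2$ only appears after adding back the free quotient of rank $\sum_{k}n/\lvert\tau_{k}\rvert$; and your direct-sum claim for the $\bar U_{k}$ needs $\bar U_{1}\cap(\bar U_{2}+\bar U_{3})=0$, not just pairwise triviality, though this does follow from the character-support argument you sketch over $\mathbb{Q}$.
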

\begin{proof} We see from the $f_{k,p}$ described in (\ref{expv}) that
${(\Coker B )}^{\ast}$ is the set of $(\lambda_{1}, \lambda_{2},
\lambda_{3}) \in {\mathbb{C}^{\ast}}^{n} \times
{\mathbb{C}^{\ast}}^{n} \times {\mathbb{C}^{\ast}}^{n}$ satisfying
\begin{equation}
\label{rel} \frac{\lambda_{1,p}}{\lambda_{1,\tau_{1}(p)}} =
\frac{\lambda_{2,p}}{\lambda_{2,\tau_{2}(p)}} =
\frac{\lambda_{3,p}}{\lambda_{3,\tau_{3}(p)}} \text{ for all $p \in
\vertices T$.}
\end{equation} Let $\pi: {(\Coker B )}^{\ast} \to
{\mathbb{C}^{\ast}}^{n}$ be the projection on the third
factor. Clearly $\ker \pi$ equals
$$
\{(\lambda_{1}, \lambda_{2}): \lambda_{1,p}=\lambda_{1,\tau_{1}(p)},
\lambda_{2,p}=\lambda_{2,\tau_{2}(p)},\, \forall p \in \vertices T\} =
{(\mathbb{C}^{\ast})}^{\frac{n}{\lvert \tau_{1}\rvert}} \times
{(\mathbb{C}^{\ast})}^{\frac{n}{\lvert \tau_{2}\rvert}}\, .$$ To prove
the statement I will now show that $\image \pi \simeq
{(\mathbb{C}^{\ast})}^{n+2-\frac{n}{\lvert
\tau_{1}\rvert}-\frac{n}{\lvert \tau_{2}\rvert}} \times
{(\mathbb{Z}_{d})}^{\ast}$.

Let $\lambda_{3} \in\image \pi$. Choose some $p$ and let $O^{1}_{p}$
be the $\tau_{1}$ orbit of $p$. From (\ref{rel}) we get
$$
\prod _{q \in O^{1}_{p}} \frac{\lambda_{3,q}}{\lambda_{3,\tau_{3}(q)}}
= \prod _{q \in O^{1}_{p}}
\frac{\lambda_{1,q}}{\lambda_{1,\tau_{1}(q)}} = 1 \, .$$ On the other
hand, given $\lambda_{3}$ satisfying this relation, choose an
arbitrary value for $\lambda_{1,p}$ and set
$$
\lambda_{1,r\tau_{1}(p)} = \lambda_{1, p} \prod _{k=0}^{r-1}
\frac{\lambda_{3,(\tau_{3}+k\tau_{1})(p)}}{\lambda_{3,k\tau_{1}(p)}}$$
to solve (\ref{rel}).  The same is of course true for $\tau_{2}$
orbits.

Thus $\image \pi$ is the set of $\lambda \in {\mathbb{C}^{\ast}}^{n}$
with
\begin{equation}
\label{hrel} \prod_{p \in O} \frac{\lambda_{p}}{\lambda_{\tau_{3}(p)}}
= 1 \text{ for all $\tau_{1}$ and $\tau_{2}$ orbits $O$.}
\end{equation} If $\mathcal{P}_{i}$ are the orbit partitions of
$\vertices T$ by $\tau_{i}$, then $\tau_{3}$ acts transitively on
$\mathcal{P}_{1}$ and $\mathcal{P}_{2}$ (Lemma~\ref{qgen}). Thus
condition (\ref{hrel}) translates to
\begin{equation*} \prod_{p \in O} \lambda_{p} = \prod_{q \in
O^{\prime}}\lambda_{q} \text{ for all $O, O^{\prime} \in
\mathcal{P}_{1}$ and all $O, O^{\prime} \in \mathcal{P}_{2}$.}
\end{equation*} For $O \in \mathcal{P}_{i}$ let this common value be
$\mu_{i} = \prod_{p \in O}\lambda_{p}$, $i=1,2$. There is a
homomorphism $\phi : \image \pi \to {\mathbb{C}^{\ast}}^{2}$, $\lambda
\mapsto (\mu_{1}, \mu_{2})$. Clearly $\ker \phi$ is the set of
$\lambda \in {\mathbb{C}^{\ast}}^{n}$ with $\prod_{p \in
O}\lambda_{p}=1$ for all $O \in \mathcal{P}_{1}$ and all $O \in
\mathcal{P}_{2}$.

Let $R_{G}$ be the free abelian group on the elements of $G$ with the
regular $G$ action. Set $R_{G}^{\tau_{i}}$ to be the invariant
sublattice under the action of $\langle \tau_{i} \rangle$. We may
realize $\ker \phi$ as the kernel of the projection $R_{G}^{\ast} \to
(R_{G}^{\tau_{1}} + R_{G}^{\tau_{2}})^{\ast}$. Now
\begin{align*}\rank (R_{G}^{\tau_{1}} + R_{G}^{\tau_{2}}) &= \rank
R_{G}^{\tau_{1}} + \rank R_{G}^{\tau_{2}} - \rank (R_{G}^{\tau_{1}}
\cap R_{G}^{\tau_{2}})\\ &= \frac{n}{\lvert
\tau_{1}\rvert}+\frac{n}{\lvert \tau_{2}\rvert}-1
\end{align*} by Lemma~\ref{qgen}. Thus $\ker \phi \simeq
{(\mathbb{C}^{\ast})}^{n+1-\frac{n}{\lvert
\tau_{1}\rvert}-\frac{n}{\lvert \tau_{2}\rvert}}$.

There is one relation between the $\mu_{i}$ namely
$$
\prod_{p \in \vertices T}\lambda_{p} = \mu_{1}^{\frac{n}{\lvert
\tau_{1}\rvert}} = \mu_{2}^{\frac{n}{\lvert \tau_{2}\rvert}}\, .$$ But
$d = \gcd\left( n/\lvert \tau_{1} \rvert, n/\lvert \tau_{2}\rvert
\right)$ so $\image \phi \simeq \mathbb{C}^{\ast} \times
{(\mathbb{Z}_{d})}^{\ast}$. Adding this up gives the result.
\end{proof}

In terms of the structure of $\Def_{X}^{a}$ one has
\begin{corollary} If $d$ is the first elementary divisor of $G$ then
the number of main components in $\Def_{X}^{a}$ is $d$.
\end{corollary}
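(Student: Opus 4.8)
The plan is to read off the count directly from the way the main components were indexed in Section~\ref{ov}. Recall that, following \cite[Corollary 2.3]{es:bin} as reviewed in Section~\ref{latticeid}, the decomposition $I_{L} = \bigcap_{\rho \in (\Sat L/L)^{\ast}} I_{\Sat L, \rho}$ is a \emph{minimal} primary decomposition, so the ideals $I_{\Sat L, \rho}$ are pairwise distinct minimal primes. These are precisely the minimal primes of $I$ not containing any variable $t_{i}$, and by definition they are the main components $S_{\rho}$. Hence the number of main components is exactly $|(\Sat L/L)^{\ast}|$.

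First I would invoke the preceding proposition, which establishes the isomorphism $\Sat L/L \simeq \mathbb{Z}_{d}$. Since the character group $(\Sat L/L)^{\ast} = \Hom_{\mathbb{Z}}(\Sat L/L, \mathbb{C}^{\ast})$ of a finite abelian group is (non-canonically) isomorphic to the group itself, and in particular has the same cardinality, this gives $|(\Sat L/L)^{\ast}| = |\Sat L/L| = d$. Combining with the previous paragraph yields that the number of main components equals $d$.

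Finally I would match this $d$ with the quantity named in the statement. By Proposition~\ref{abg} the elementary divisors of $G$ are $(d, n/d)$ with $d = \gcd(n/\lvert \tau_{i}\rvert, n/\lvert \tau_{j}\rvert)$, and this is the same $d$ that enters the preceding proposition through Lemma~\ref{qgen}; thus $d$ is by construction the first elementary divisor of $G$, as claimed.

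There is essentially no obstacle: the corollary is a bookkeeping consequence of the isomorphism $\Sat L/L \simeq \mathbb{Z}_{d}$ together with the identification of the main components with the characters of $\Sat L/L$. The only point worth flagging is that the $S_{\rho}$ must be counted as genuinely \emph{distinct} components, which is exactly what the minimality of the primary decomposition in \cite[Corollary 2.3]{es:bin} guarantees.
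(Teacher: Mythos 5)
Your proposal is correct and is essentially the paper's own (implicit) argument: the corollary there is stated without separate proof precisely because the main components were defined as the $S_{\rho}$ indexed by $\rho \in (\Sat L/L)^{\ast}$, and the preceding proposition gives $\Sat L/L \simeq \mathbb{Z}_{d}$ with $d$ the first elementary divisor by Proposition~\ref{abg}. Your added remark that minimality of the decomposition in \cite[Corollary 2.3]{es:bin} guarantees the $S_{\rho}$ are genuinely distinct is a worthwhile point the paper leaves tacit, but it does not change the route.
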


\subsection{The group $(M^{\prime}/M)^{\ast}$} I will compute
$(M^{\prime}/M)^{\ast}$. This is of interest in itself, but will also
be important for out study of $S$ in Section~\ref{modulisec}.

From the weight matrix $A$ (\ref{amatrix}) we see that ${\left(
M^{\prime}/M \right)}^{\ast} \subset {\mathbb{C}^{\ast}}^{3} \times
({\mathbb{C}^{\ast}}^{n}/\mathbb{C}^{\ast})$ is defined by
\begin{equation}
\label{ml}
\mu_{k}\frac{\lambda_{p}\lambda_{\tau_{k}(p)}}{\lambda_{-\tau_{i}(p)}
\lambda_{-\tau_{j}(p)}} = 1
\end{equation} where $(\mu_{1}, \mu_{2}, \mu_{3}) \in
{\mathbb{C}^{\ast}}^{3}$ and $\lambda \in
{\mathbb{C}^{\ast}}^{n}/\mathbb{C}^{\ast}$ has coordinates indexed by
$\vertices T$. I will first solve these equations for $\tau_{k} \in
\mathbb{T}$ and $p \in \vertices \{3, 6\}$ and then see what happens
in the quotient. For this purpose we need the following quadratic
parabolic function.
\begin{definition} Define $q: \mathbb{Z}^{2} \to \mathbb{Z}$ by
$$
q(x,y) = \frac{1}{2}(x^{2} + y^{2} -2xy - x- y) =
\frac{1}{2}((x-y)^{2}- (x+y))\, .$$
\end{definition}

\begin{lemma} \label{qcomp} For $q$ there are the equalities $$
q(mx,my) = m q(x,y) + \frac{1}{2}m(m-1) \, (x-y)^{2}$$ and
$$
q(x+z,y+w) = q(x,y) + q(z,w) + (x-y)(z-w) \, .$$
\end{lemma}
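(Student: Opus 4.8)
The statement is purely a pair of algebraic identities for the explicit quadratic form $q(x,y) = \frac{1}{2}(x^2+y^2-2xy-x-y)$, so both can be verified by direct expansion; the real content is organizing the computation so the cross-terms are transparent rather than grinding symbols. The plan is to use the factored form $q(x,y) = \frac{1}{2}\bigl((x-y)^2 - (x+y)\bigr)$ throughout, since it isolates the homogeneous quadratic piece $(x-y)^2$ from the linear piece $(x+y)$, and both identities are really statements about how these two pieces transform.

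**First identity (scaling).** For $q(mx,my)$ I would substitute into the factored form: the quadratic part becomes $(mx-my)^2 = m^2(x-y)^2$ and the linear part becomes $m(x+y)$, so
\[
q(mx,my) = \tfrac{1}{2}\bigl(m^2(x-y)^2 - m(x+y)\bigr).
\]
On the other hand $m\,q(x,y) = \tfrac{1}{2}\bigl(m(x-y)^2 - m(x+y)\bigr)$, so the difference is $q(mx,my) - m\,q(x,y) = \tfrac{1}{2}(m^2-m)(x-y)^2 = \tfrac{1}{2}m(m-1)(x-y)^2$, which is exactly the claimed correction term. This step is immediate once the quadratic and linear pieces are separated.

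**Second identity (polarization).** For $q(x+z,\,y+w)$ the quadratic part of the factored form is $\bigl((x+z)-(y+w)\bigr)^2 = \bigl((x-y)+(z-w)\bigr)^2 = (x-y)^2 + 2(x-y)(z-w) + (z-w)^2$, and the linear part is $(x+z)+(y+w) = (x+y)+(z+w)$. Collecting these and dividing by $2$ gives
\[
q(x+z,y+w) = \tfrac{1}{2}\bigl((x-y)^2-(x+y)\bigr) + \tfrac{1}{2}\bigl((z-w)^2-(z+w)\bigr) + (x-y)(z-w),
\]
and the first two grouped terms are precisely $q(x,y)$ and $q(z,w)$, leaving the bilinear cross-term $(x-y)(z-w)$ as claimed. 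So $q$ is an inhomogeneous quadratic whose associated symmetric bilinear form is $(x,y),(z,w) \mapsto (x-y)(z-w)$, and both identities are just the general scaling and polarization rules for such a form.

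**Main obstacle.** There is essentially no obstacle here; this is a routine verification and the only risk is a sign or coefficient slip in the linear part. Using the factored expression rather than the expanded one removes even that risk, since the linear term $-\tfrac{1}{2}(x+y)$ is manifestly additive under both operations (it simply scales by $m$ in the first identity and splits as $(x+y)+(z+w)$ in the second), so it contributes nothing to either correction term — all the nontrivial behavior lives in the quadratic piece $(x-y)^2$.
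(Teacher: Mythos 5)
Your verification is correct: the paper states Lemma \ref{qcomp} without proof, treating it as a routine computation, and your argument is exactly the intended one, using the factored form $q(x,y)=\tfrac{1}{2}\bigl((x-y)^{2}-(x+y)\bigr)$ that the paper already supplies in the definition so that the linear part is manifestly additive and all corrections come from the quadratic piece. Both identities check out as you computed them, so there is nothing to add.
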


\begin{lemma} Choose an origin $0$ in $\vertices \{3,6\}$ and
$\lambda_{0}, \lambda_{k}, \mu_{k} \in \mathbb{C}^{\ast}$, $k=1,2,3$
satisfying
\begin{equation}
\label{con} \lambda_{0}^{3} = \frac{\lambda_{1}\lambda_{2}\lambda_{3}}
{\mu_{1}\mu_{2}\mu_{3}} \, .
\end{equation} After these choices, a solution for (\ref{ml}) with
$\tau_{k} \in \mathbb{T}$ and $p \in \vertices \{3, 6\}$ is unique and
given for $p = \alpha \tau_{1}(0) + \beta \tau_{2}(0) + \gamma
\tau_{3}(0)$ by
\begin{equation}
\label{sol} \lambda_{p} = \lambda_{0}^{1-\alpha - \beta - \gamma}
\lambda_{1}^{\alpha} \lambda_{2}^{\beta} \lambda_{3}^{\gamma}\,
\mu_{1}^{q(\beta, \gamma)} \mu_{2}^{q(\alpha, \gamma)}
\mu_{3}^{q(\alpha, \beta)} \, .
\end{equation}
\end{lemma}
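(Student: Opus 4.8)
The plan is to verify directly that the proposed formula~\eqref{sol} satisfies the system~\eqref{ml} and to establish uniqueness, using the additive structure of $\vertices\{3,6\} \cong \mathbb{Z}^2$ together with the quadratic cocycle identities of Lemma~\ref{qcomp}. Since $\tau_1, \tau_2, \tau_3$ span $\mathbb{T}$ with the single relation $\tau_1 + \tau_2 + \tau_3 = 0$, every $p \in \vertices\{3,6\}$ has a representation $p = \alpha\tau_1(0) + \beta\tau_2(0) + \gamma\tau_3(0)$, and the formula is well-defined precisely because the constraint~\eqref{con} makes $\lambda_p$ insensitive to the ambiguity $(\alpha,\beta,\gamma) \mapsto (\alpha+1,\beta+1,\gamma+1)$: indeed under this shift the prefactor $\lambda_0^{1-\alpha-\beta-\gamma}\lambda_1^\alpha\lambda_2^\beta\lambda_3^\gamma$ changes by $\lambda_0^{-3}\lambda_1\lambda_2\lambda_3 = \mu_1\mu_2\mu_3$, while the exponents $q(\beta,\gamma) + q(\alpha,\gamma) + q(\alpha,\beta)$ each increase in a controlled way via the second identity of Lemma~\ref{qcomp}; checking that these two effects cancel is the first thing I would confirm.

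First I would record the effect of applying a principal translation on the exponents. Writing $p$ in barycentric coordinates $(\alpha,\beta,\gamma)$, the point $\tau_1(p)$ has coordinates $(\alpha+1,\beta,\gamma)$, while $-\tau_2(p) = \tau_1(p)+\tau_3(p)-\tau_2(p)$-type shifts are read off from $\tau_1+\tau_2+\tau_3=0$; concretely each of $\tau_k(p)$, $-\tau_i(p)$, $-\tau_j(p)$ corresponds to adding an explicit integer vector to $(\alpha,\beta,\gamma)$. The key computation is then to evaluate $\lambda_p \lambda_{\tau_k(p)} / (\lambda_{-\tau_i(p)}\lambda_{-\tau_j(p)})$ by expanding each factor through~\eqref{sol}. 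The linear-in-$(\alpha,\beta,\gamma)$ contributions from the $\lambda_0,\lambda_1,\lambda_2,\lambda_3$ prefactors will collapse to a single power of $\mu_k$ (independent of $p$), and the $\mu$-exponents built from $q$ will combine via the bilinear correction term $(x-y)(z-w)$ in Lemma~\ref{qcomp}; the constant and $p$-dependent terms must cancel to leave exactly $\mu_k^{-1}$, which is~\eqref{ml}. I expect this to reduce, after using $\tau_1+\tau_2+\tau_3=0$ to eliminate one coordinate, to a finite identity among small integers that one checks case by case for $k=1,2,3$.

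For uniqueness, I would argue that once $\lambda_0$ and the $\lambda_k,\mu_k$ are fixed, equation~\eqref{ml} determines $\lambda_{\tau_k(p)}$ from $\lambda_p$ and the values at the two neighbors $-\tau_i(p), -\tau_j(p)$; since the Cayley graph of $\mathbb{T}$ with respect to the principal translations is connected (Proposition~\ref{cg}), the value at $0$ propagates to a unique value at every vertex, provided the propagation is consistent around cycles. Consistency around the fundamental hexagonal relation is exactly what the verification in the previous paragraph guarantees, so no contradiction arises. The main obstacle, and the only genuinely delicate point, is the bookkeeping of the quadratic exponents: one must track how $q(\beta,\gamma)$, $q(\alpha,\gamma)$, $q(\alpha,\beta)$ transform under each of the three translation types and confirm that the cross-terms from Lemma~\ref{qcomp} assemble into precisely the right power of $\mu_k$ with no residual dependence on $p$. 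Everything else is a direct substitution.
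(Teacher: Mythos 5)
Your overall strategy coincides with the paper's: well-definedness of (\ref{sol}) under the ambiguity $(\alpha,\beta,\gamma)\mapsto(\alpha+\delta,\beta+\delta,\gamma+\delta)$ via the identity $q(x+\delta,y+\delta)=q(x,y)-\delta$ (a special case of Lemma \ref{qcomp}) together with condition (\ref{con}), then direct substitution of (\ref{sol}) into (\ref{ml}) using the increment identities $q(x+1,y)=q(x,y)+x-y$ and $q(x-1,y)=q(x,y)+y-x+1$. One bookkeeping correction to your verification plan: the $\lambda_{0},\lambda_{1},\lambda_{2},\lambda_{3}$ prefactors do not collapse to ``a single power of $\mu_{k}$''; they collapse to $\lambda_{0}^{-3}\lambda_{1}\lambda_{2}\lambda_{3}$, while the $q$-exponents contribute $\mu_{1}^{-1}\mu_{2}^{-1}\mu_{3}^{-1}\cdot\mu_{k}^{-1}$, so the whole expression equals $\mu_{k}^{-1}$ only after invoking (\ref{con}) a second time. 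In other words, (\ref{con}) is needed not just for well-definedness but also for (\ref{sol}) to actually solve (\ref{ml}); this is exactly how the paper's computation terminates.

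The genuine soft spot is your uniqueness argument. Connectivity of the Cayley graph is not the right mechanism: each relation (\ref{ml}) couples \emph{four} vertices $p$, $\tau_{k}(p)$, $-\tau_{i}(p)$, $-\tau_{j}(p)$, and a propagation step requires three of the four values to be already known, which mere adjacency does not provide. Worse, your specific step --- always solving for $\lambda_{\tau_{k}(p)}$ from $\lambda_{p}$, $\lambda_{-\tau_{i}(p)}$, $\lambda_{-\tau_{j}(p)}$ --- can never fire from the initial data $\lambda_{0},\lambda_{\tau_{1}(0)},\lambda_{\tau_{2}(0)},\lambda_{\tau_{3}(0)}$: no vertex $p$ has itself and \emph{both} of its negative neighbors among these four points, so the induction cannot start. (Note also that the three relations based at a single $p$, with the three positive neighbors known, determine the negative neighbors only up to a common sign: they have the form $bc=C_{1}$, $ac=C_{2}$, $ab=C_{3}$.) The repair is to use that each relation is linear in \emph{each} of its four variables, in particular in the base vertex: for instance the $k=1$ relation based at $p=-\tau_{1}(0)$ reads $\mu_{1}\lambda_{-\tau_{1}(0)}\lambda_{0}=\lambda_{2}\lambda_{3}$ and pins down $\lambda_{-\tau_{1}(0)}$ immediately; with this flexibility the outward induction from the origin goes through, which is the content of the paper's observation that $\lambda_{p}$ and three suitable values on the link of $p$ determine the remaining three. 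Finally, the cycle-consistency you single out as the delicate point is irrelevant for uniqueness: it is an existence issue, and existence is already settled by the explicit formula (\ref{sol}).
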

\begin{proof} We first check that the (\ref{sol}) is well
defined. Assume $p= \alpha \tau_{1}(0) + \beta \tau_{2}(0) + \gamma
\tau_{3}(0) = a \tau_{1}(0) + b\tau_{2}(0) + c \tau_{3}(0)$, then
$\alpha -a = \beta -b = \gamma -c = \delta$ for some $\delta$. Now
$q(x + \delta, y + \delta) = q(x,y) - \delta$ so the right hand side
in (\ref{sol}) is changed by multiplication with
\begin{equation*} {\left(\frac{\lambda_{1}\lambda_{2}\lambda_{3}}
{\lambda_{0}^3\, \mu_{1}\mu_{2}\mu_{3}}\right)}^{\delta}
\end{equation*} which equals $1$ by condition (\ref{con}).

For the uniqueness consider a vertex $p$ and its link with vertices
$q_{1}, \dots, q_{6}$. One checks directly that if the $\mu_{k}$,
$\lambda_{p}$ and $3$ of the $\lambda_{q_{i}}$ are given, then the
relations (\ref{ml}) determine the other $3$ $\lambda_{q_{i}}$. Now
beginning in the origin and working outward we see that all
$\lambda_{p}$ are determined by $\lambda_{0}$ and
$\lambda_{\tau_{k}(0)}$, $k=1,2,3$.

Finally we must check that these $\lambda_{p}$ actually are
solutions. We do this only for $k=1$. After plugging (\ref{sol}) into
(\ref{ml}) and some obvious cancellations we arrive at $$ \frac{
\lambda_{1}\lambda_{2}\lambda_{3}\, \mu_{1}^{2q(\beta, \gamma) + 1}
\mu_{2}^{q(\alpha+1, \gamma)} \mu_{3}^{q(\alpha +1,
\beta)}}{\lambda_{0}^{3} \, \mu_{1}^{q(\beta-1, \gamma)}
\mu_{1}^{q(\beta, \gamma-1)} \mu_{2}^{q(\alpha, \gamma-1)}
\mu_{3}^{q(\alpha, \beta-1)} } \, . $$ Now $q(x+1,y) = q(x,y) + x -y$
and $q(x-1,y) = q(x,y) + y-x +1$, so this reduces further to
$$
\frac{ \lambda_{1}\lambda_{2}\lambda_{3} }{\lambda_{0}^{3} \,
\mu_{1}\mu_{2} \mu_{3} } = 1\, . $$
\end{proof}

Now take the quotient by $\Gamma$. With the notation of the symmetric
presentation let $\alpha_{t,1} \tau_{1} + \alpha_{t,2} \tau_{2} +
\alpha_{t,1}\tau_{3}$, $t=1,2$, be generators of $\Gamma$ and set
$r_{t} = \alpha_{t,1} \tau_{1}(0) + \alpha_{t,2} \tau_{2}(0) +
\alpha_{t,1}\tau_{3}(0)$.
\begin{lemma} \label{mcon} The character group ${\left( M^{\prime}/M
\right)}^{\ast} \subset {\mathbb{C}^{\ast}}^{3} \times
({\mathbb{C}^{\ast}}^{n}/\mathbb{C}^{\ast})$ consists of the solutions
(\ref{sol}) under the condition (\ref{con}) with $$ \lambda_{0} =
\lambda_{r_{1}} = \lambda_{r_{2}}= 1$$ and $$
\mu_{1}^{A(\alpha_{t,2}-\alpha_{t,3})}
\mu_{2}^{B(\alpha_{t,3}-\alpha_{t,1})}
\mu_{3}^{C(\alpha_{t,1}-\alpha_{t,2})} = 1$$ for $t=1,2$ and all
integers $A,B,C$ with $A+B+C =0$.
\end{lemma}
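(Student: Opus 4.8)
The plan is to realize $(M'/M)^{\ast}$ as exactly those universal-cover solutions $(\ref{sol})$ subject to $(\ref{con})$, classified in the previous lemma, which \emph{descend} to $T = \{3,6\}/\Gamma$, i.e. those $\lambda$ that are $\Gamma$-periodic as functions on $\vertices\{3,6\}$. Since $\lambda$ is only defined modulo the global scaling in $({\mathbb{C}^{\ast}}^{n}/\mathbb{C}^{\ast})$, I first pin that scaling down. Multiplying every $\lambda_{p}$ by a common $c \in \mathbb{C}^{\ast}$ corresponds, through $(\ref{sol})$, to $(\lambda_{0}, \lambda_{1}, \lambda_{2}, \lambda_{3}) \mapsto (c\lambda_{0}, c\lambda_{1}, c\lambda_{2}, c\lambda_{3})$ with the $\mu_{k}$ fixed, and this preserves the constraint $(\ref{con})$. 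Hence I may normalize $\lambda_{0} = 1$, which is the first of the three conditions $\lambda_{0} = \lambda_{r_{1}} = \lambda_{r_{2}} = 1$.

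Because $r_{1}, r_{2}$ generate $\Gamma$, periodicity amounts to $\lambda_{p + r_{t}} = \lambda_{p}$ for all $p \in \vertices\{3,6\}$ and $t = 1,2$. Writing $p = \alpha\tau_{1}(0) + \beta\tau_{2}(0) + \gamma\tau_{3}(0)$ and $r_{t} = \alpha_{t,1}\tau_{1}(0) + \alpha_{t,2}\tau_{2}(0) + \alpha_{t,3}\tau_{3}(0)$, I would compute the ratio $\lambda_{p+r_{t}}/\lambda_{p}$ straight from $(\ref{sol})$, expanding $q(\beta + \alpha_{t,2}, \gamma + \alpha_{t,3})$ and its cyclic companions via the addition formula $q(x+z, y+w) = q(x,y) + q(z,w) + (x-y)(z-w)$ of Lemma~\ref{qcomp}. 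This splits the ratio into a $p$-independent monomial in $\lambda_{1},\lambda_{2},\lambda_{3},\mu_{1},\mu_{2},\mu_{3}$ (built from the values $q(\alpha_{t,\ast})$) times a $p$-dependent factor $\mu_{1}^{(\beta-\gamma)(\alpha_{t,2}-\alpha_{t,3})}\mu_{2}^{(\alpha-\gamma)(\alpha_{t,1}-\alpha_{t,3})}\mu_{3}^{(\alpha-\beta)(\alpha_{t,1}-\alpha_{t,2})}$, whose exponents are the bilinear cross terms produced by the formula.

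Evaluating at the origin $p = 0$ the $p$-dependent factor is trivial, so the ratio there equals the $p$-independent monomial, which is exactly $\lambda_{r_{t}}$ (using $\lambda_{0} = 1$); thus the vanishing of that monomial is precisely $\lambda_{r_{t}} = 1$. It then remains to force the $p$-dependent factor to equal $1$ for \emph{all} $p$. Setting $u = \alpha - \gamma$, $v = \beta - \gamma$ as free coordinates on $\mathbb{T} \cong \mathbb{Z}^{2}$, this factor regroups as $[\,\cdots\,]^{u}[\,\cdots\,]^{v}$, so it is identically $1$ iff each bracket is $1$; using $(\alpha_{t,2}-\alpha_{t,3}) + (\alpha_{t,3}-\alpha_{t,1}) + (\alpha_{t,1}-\alpha_{t,2}) = 0$, the two bracket conditions collapse to the chain $\mu_{1}^{\alpha_{t,2}-\alpha_{t,3}} = \mu_{2}^{\alpha_{t,3}-\alpha_{t,1}} = \mu_{3}^{\alpha_{t,1}-\alpha_{t,2}}$. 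Finally I would verify this chain is equivalent to the stated relation $\mu_{1}^{A(\alpha_{t,2}-\alpha_{t,3})}\mu_{2}^{B(\alpha_{t,3}-\alpha_{t,1})}\mu_{3}^{C(\alpha_{t,1}-\alpha_{t,2})} = 1$ for all $A+B+C = 0$, by testing on the basis $(A,B,C) = (1,-1,0)$ and $(0,1,-1)$ of the lattice $\{A+B+C = 0\}$. The step requiring the most care is this passage from periodicity at the origin to periodicity everywhere: since $q$ is quadratic, imposing $\lambda_{p+r_{t}} = \lambda_{p}$ at a single point does \emph{not} suffice, and it is exactly the $\mu$-conditions that kill the genuine $p$-dependence of the cocycle $\lambda_{p+r_{t}}/\lambda_{p}$. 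The addition formula of Lemma~\ref{qcomp} is the engine that makes this dependence explicit and linear in $(u,v)$, so that it can be annihilated coefficientwise.
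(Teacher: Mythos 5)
Your proof is correct and takes essentially the same route as the paper: normalize $\lambda_{0}=1$, reduce $\Gamma$-periodicity to the generators $r_{1},r_{2}$, and use the addition formula of Lemma~\ref{qcomp} to split $\lambda_{p+r_{t}}/\lambda_{p}$ into the $p$-independent part $\lambda_{r_{t}}$ (forcing $\lambda_{r_{t}}=1$) and the $p$-dependent $\mu$-factor whose triviality for all $p$ is exactly the second condition. The paper reads the exponents off directly as $A=b-c$, $B=c-a$, $C=a-b$ (and also verifies $\lambda_{m_{1}r_{1}+m_{2}r_{2}}=1$ for general $m_{1},m_{2}$ via the $q(mx,my)$ identity), while your $(u,v)$-regrouping and basis check on $(1,-1,0)$, $(0,1,-1)$ is an equivalent repackaging of the same computation.
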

\begin{proof} Setting $\lambda_{0} = 1$ corresponds to the second
factor being ${\mathbb{C}^{\ast}}^{n}/\mathbb{C}^{\ast}$. The first
condition is clearly necessary.

For the solution to be valid modulo $\Gamma$ we must have
$\lambda_{m_{1}r_{1} + m_{2}r_{2}+p} = \lambda_{p}$ for all integers
$m_{i}$ and $p \in \vertices \{3,6\}$.  Consider first
\begin{multline*} \lambda_{m_{1}r_{1} + m_{2}r_{2}} =
\lambda_{r_{1}}^{m_{1}} \lambda_{r_{2}}^{m_{2}}\\ \cdot \left(
\mu_{1}^{(\alpha_{1,2}-\alpha_{1,3}) (\alpha_{2,2}-\alpha_{2,3})}
\mu_{2}^{(\alpha_{1,3}-\alpha_{1,1})(\alpha_{2,3}-\alpha_{2,1})}
\mu_{3}^{(\alpha_{1,1}-\alpha_{1,2})(\alpha_{2,1}-\alpha_{2,2})}
\right)^{m_{1}m_{2}} \\ \cdot\prod_{t=1}^{2} \left(
\mu_{1}^{(\alpha_{t,2}-\alpha_{t,3})^{2}}
\mu_{2}^{(\alpha_{t,3}-\alpha_{t,1})^{2}}
\mu_{3}^{(\alpha_{t,1}-\alpha_{t,2})^{2}}
\right)^{\frac{1}{2}m_{t}(m_{t}-1)}
\end{multline*} by Lemma \ref{qcomp}. The two conditions in the
statement imply that this expression equals $1$.

Now, in general, if $r = \alpha \tau_{1}(0) + \beta \tau_{2}(0) +
\gamma \tau_{3}(0)$ and $p = a \tau_{1}(0) + b\tau_{2}(0) +
c\tau_{3}(0)$ then
\begin{equation}
\label{gen} \lambda_{r+p} = \lambda_{r}\lambda_{p} \,
\mu_{1}^{(\beta-\gamma) (b-c)} \mu_{2}^{(\gamma-\alpha)(c-a)}
\mu_{3}^{(\alpha-\beta)(a-b)}
\end{equation} by Lemma \ref{qcomp}.

Let $r = m_{1}r_{1} + m_{2}r_{2}$ and $A=b-c, B=c-a, C=a-b$ in the
(\ref{gen}). Set $\beta_{t,ij} = \alpha_{t,i}-\alpha_{t,j}$ to shorten
notation. Then the factor involving the $\mu_{i}$ in (\ref{gen})
becomes $$ \mu_{1}^{A(m_{1}\beta_{1,2,3} + m_{2}\beta_{2,2,3})}
\mu_{2}^{B(m_{1}\beta_{1,3,1} + m_{2}\beta_{2,3,1})}
\mu_{3}^{C(m_{1}\beta_{1,1,2} + m_{2}\beta_{2,1,2})} = 1$$ by the
second condition. Thus for all $p$, $\lambda_{m_{1}r_{1} +
m_{2}r_{2}+p} = \lambda_{p}$. Choosing $m_{1}=1, m_{2}=0$ and vice
versa gives the necessity of the second condition.
\end{proof}

\begin{proposition} \label{ext} There is an extension $$ 1 \to
G^{\ast} \to {\left( M^{\prime}/M \right)}^{\ast} \to G^{\ast} \times
{(\mathbb{Z}_{d})}^{\ast} \to 1\, .$$ In particular $|M^{\prime}/M| =
n^2d$.
\end{proposition}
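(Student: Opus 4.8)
The plan is to realize the extension as a short exact sequence built from the projection of $\left(M^{\prime}/M\right)^{\ast}$ onto its $\mu$-coordinates. By Lemma~\ref{mcon} an element may be written as a tuple $(\lambda_{1},\lambda_{2},\lambda_{3},\mu_{1},\mu_{2},\mu_{3})\in{(\mathbb{C}^{\ast})}^{6}$ satisfying (\ref{con}) with $\lambda_{0}=1$, the normalization $\lambda_{r_{1}}=\lambda_{r_{2}}=1$, and the $\mu$-conditions, with the remaining $\lambda_{p}$ recovered from (\ref{sol}). Let $\pi:\left(M^{\prime}/M\right)^{\ast}\to{(\mathbb{C}^{\ast})}^{3}$ be $(\lambda,\mu)\mapsto(\mu_{1},\mu_{2},\mu_{3})$. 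First I would identify $\ker\pi$: setting $\mu=(1,1,1)$ in (\ref{sol}) gives $\lambda_{p}=\lambda_{1}^{\alpha}\lambda_{2}^{\beta}\lambda_{3}^{\gamma}$, while (\ref{con}) becomes $\lambda_{1}\lambda_{2}\lambda_{3}=1$ and the normalization becomes $\lambda_{r_{t}}=1$. These three multiplicative relations are exactly the three columns of the matrix $R$ of the symmetric presentation, so $\ker\pi$ is the group of characters of $G=\mathbb{Z}^{3}/\image R$; that is, $\ker\pi=G^{\ast}$, giving the injection $G^{\ast}\hookrightarrow\left(M^{\prime}/M\right)^{\ast}$.

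Next I would compute $\image\pi$. For fixed $\mu$, the normalization together with (\ref{con}) is a system of three multiplicative equations in $(\lambda_{1},\lambda_{2},\lambda_{3})$ whose exponent matrix is $R^{\mathsf{T}}$; since $\det R=n\neq0$ the induced map ${(\mathbb{C}^{\ast})}^{3}\to{(\mathbb{C}^{\ast})}^{3}$ is surjective, so a lift $\lambda$ exists for every $\mu$. Hence $\image\pi=P$, the subgroup of ${(\mathbb{C}^{\ast})}^{3}$ cut out by the $\mu$-conditions. Substituting $C=-A-B$ one checks these are equivalent to $\mu_{1}^{u_{t}}=\mu_{2}^{v_{t}}=\mu_{3}^{w_{t}}$ for $t=1,2$, where $(u_{t},v_{t},w_{t})=(\alpha_{t,2}-\alpha_{t,3},\,\alpha_{t,3}-\alpha_{t,1},\,\alpha_{t,1}-\alpha_{t,2})$ and $u_{t}+v_{t}+w_{t}=0$. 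Thus $P=\left(\mathbb{Z}^{3}/N\right)^{\ast}$, where $N$ is generated by the rows of
$$
B_{N}=\begin{pmatrix} u_{1} & -v_{1} & 0\\ 0 & v_{1} & -w_{1}\\ u_{2} & -v_{2} & 0\\ 0 & v_{2} & -w_{2}\end{pmatrix},
$$
and we obtain the short exact sequence $1\to G^{\ast}\to\left(M^{\prime}/M\right)^{\ast}\xrightarrow{\pi}P\to1$.

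The heart of the argument, and the step I expect to be the main obstacle, is the identification $P\simeq G^{\ast}\times{(\mathbb{Z}_{d})}^{\ast}$, i.e.\ $\mathbb{Z}^{3}/N\simeq G\times\mathbb{Z}_{d}$, which I would extract from the Smith normal form of $B_{N}$. The inputs are: every $2\times2$ minor of $\bigl(\begin{smallmatrix}u_{1}&v_{1}&w_{1}\\u_{2}&v_{2}&w_{2}\end{smallmatrix}\bigr)$ equals $\det R=n$ (using $u_{t}+v_{t}+w_{t}=0$ and row-reducing the computation of $\det R$); the column gcds are $\gcd(u_{1},u_{2})=n/\lvert\tau_{1}\rvert$, $\gcd(v_{1},v_{2})=n/\lvert\tau_{2}\rvert$, $\gcd(w_{1},w_{2})=n/\lvert\tau_{3}\rvert$ by Proposition~\ref{orders}; and these have common gcd $d$ by Proposition~\ref{abg} and Lemma~\ref{qgen}. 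Expanding the four $3\times3$ minors of $B_{N}$ gives $\pm w_{1}n,\pm u_{1}n,\pm w_{2}n,\pm u_{2}n$, whose gcd is $n\gcd(u_{1},u_{2},w_{1},w_{2})=n\,d$; the gcd of the entries is $\gcd(u_{1},u_{2},v_{1},v_{2},w_{1},w_{2})=d$. The remaining point, which is the crux, is that the gcd of the $2\times2$ minors is exactly $d^{2}$: since $d$ divides every entry the minors are all divisible by $d^{2}$, and one must exhibit $2\times2$ minors of $B_{N}/d$ that are coprime. Granting this, the invariant factors of $\mathbb{Z}^{3}/N$ are $(d,d,n/d)$, which are precisely the elementary divisors of $G\times\mathbb{Z}_{d}$ (recall $G\simeq\mathbb{Z}_{d}\times\mathbb{Z}_{n/d}$), so $P\simeq G^{\ast}\times{(\mathbb{Z}_{d})}^{\ast}$.

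Composing $\pi$ with this isomorphism yields the stated extension, and the order count is immediate: $\lvert M^{\prime}/M\rvert=\lvert G^{\ast}\rvert\cdot\lvert P\rvert=n\cdot nd=n^{2}d$. As a conceptual backbone that I would keep in reserve for the final identification, I note that for $\mu\in P$ formula (\ref{gen}) defines a symmetric bilinear pairing $e_{\mu}:G\times G\to\mathbb{C}^{\ast}$, the $\mu$-conditions being exactly its $\Gamma$-invariance; this organizes $P$ through the associated homomorphism $G\to G^{\ast}$ and suggests the clean splitting of the quotient into a $G^{\ast}$ factor and the residual ${(\mathbb{Z}_{d})}^{\ast}$.
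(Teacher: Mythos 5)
Your proposal follows essentially the same route as the paper's proof: project ${(M^{\prime}/M)}^{\ast}$ onto the $\mu$-coordinates, identify the kernel with $G^{\ast}$ by setting $\mu = 1$ (the three resulting relations being exactly the columns of $R$), and then compute the image as the character group of a finite quotient $\mathbb{Z}^{3}/N$ whose invariant factors are $(d,d,n/d)$. The only structural difference is that the paper carries out the last step in the \emph{standard} presentation, where the four generating relations become $\mu_{2}^{a} = \mu_{2}^{a}\mu_{3}^{a} = \mu_{1}^{c}\mu_{2}^{b+c} = \mu_{2}^{b+c}\mu_{3}^{b} = 1$, and then simply asserts that the elementary divisors of the matrix (\ref{z3}) are $(d,d,n/d)$, while you stay in the symmetric presentation with the $u_{t}, v_{t}, w_{t}$. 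Two of your additions are genuine improvements in rigor over the printed proof: the explicit surjectivity of $\pi$ onto $P$ (exponent matrix $R^{\mathsf{T}}$, $\det R = n \neq 0$, divisibility of $\mathbb{C}^{\ast}$), which the paper passes over silently, and the clean reduction of the $\mu$-conditions to $\mu_{1}^{u_{t}} = \mu_{2}^{v_{t}} = \mu_{3}^{w_{t}}$.

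The one step you explicitly leave open --- that the gcd of the $2\times 2$ minors of $B_{N}$ is exactly $d^{2}$ --- is true and closes with the lemmas you already cite, so it is a fillable gap rather than a flaw. Listing the minors of $B_{N}$, they are (up to sign) $n$ (from the row pairs $\{1,3\}$ and $\{2,4\}$) together with all twelve mixed products $x_{s}y_{t}$ with $x \neq y \in \{u,v,w\}$ and $s,t \in \{1,2\}$; no products of the form $x_{s}x_{t}$ with $x=y$ occur. Fix a prime $p$ and let $m_{u}, m_{v}, m_{w}$ be the $p$-adic valuations of the column gcds $\gcd(u_{1},u_{2}) = n/\lvert\tau_{1}\rvert$, $\gcd(v_{1},v_{2}) = n/\lvert\tau_{2}\rvert$, $\gcd(w_{1},w_{2}) = n/\lvert\tau_{3}\rvert$. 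The minimal valuation among the mixed products is the sum of the two smallest of $m_{u},m_{v},m_{w}$; but Lemma~\ref{qgen}(iii) says $\min(m_{x},m_{y})$ is independent of the pair $x \neq y$, which forces the two smallest to coincide, so this sum equals $2\min(m_{u},m_{v},m_{w}) = 2v_{p}(d)$. Since also $d^{2} \mid n$ (as $d \mid n/d$ by Proposition~\ref{abg}), the gcd of all $2\times 2$ minors is $d^{2}$, and with your values $(g_{1},g_{2},g_{3}) = (d, d^{2}, nd)$ the invariant factors are $(d,d,n/d)$ as claimed. Equivalently, in the standard presentation the minors are $a^{2}, ab, ac, a(b+c), bc, b(b+c), c(b+c)$ up to sign; their gcd divides $b(b+c)-bc = b^{2}$ and $c(b+c)-bc = c^{2}$, hence divides $\gcd(a,b,c)^{2} = d^{2}$, while $d^{2}$ clearly divides each --- which is presumably the unrecorded computation behind the paper's assertion about (\ref{z3}). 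With that inserted, your argument is complete and agrees with the paper, including the final count $\lvert M^{\prime}/M\rvert = n \cdot nd = n^{2}d$.
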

\begin{proof} Consider the projection on the first factor of
${\mathbb{C}^{\ast}}^{3} \times {\mathbb{C}^{\ast}}^{n-1}$ restricted
to ${\left( M^{\prime}/M \right)}^{\ast}$. I claim the kernel is
$G^{\ast}$. Indeed, if we set $\mu_{i} = 1$ in the conditions of Lemma
\ref{mcon} we are left with $$ \lambda_{1}\lambda_{2} \lambda_{3} =
\lambda_{1}^{\alpha_{1,1}} \lambda_{2}^{\alpha_{1,2}}
\lambda_{3}^{\alpha_{1,3}} = \lambda_{1}^{\alpha_{2,1}}
\lambda_{2}^{\alpha_{2,2}} \lambda_{3}^{\alpha_{2,3}} = 1 \, .$$ Thus
to prove the statement we must show that the image of the projection
is the character group of $\mathbb{Z}_{d} \times \mathbb{Z}_{d} \times
\mathbb{Z}_{n/d}$.

Consider the relations among the $\mu_{i}$ described in Lemma
\ref{mcon}.  There are $4$ generating relations corresponding to $(A,
B, C) = (1,-1,0)$ and $(0,-1,1)$. We may use the standard presentation
of $G$ to compute them. They are $$ \mu_{2}^{a} =
\mu_{2}^{a}\mu_{3}^{a} = \mu_{1}^{c}\mu_{2}^{b+c} =
\mu_{2}^{b+c}\mu_{3}^{b} = 1\, .$$ One may compute the $\gcd$ of
minors and find that the elementary divisors of
\begin{equation}
\label{z3}
\begin{pmatrix} 0 & 0 & c & 0\\ a & 0 & b+c & b+c \\ 0 & a & 0 & b
\end{pmatrix}
\end{equation} are $(d,d,n/d)$.
\end{proof}

\subsection{The cone $\sigma^{\vee}$} \label{conesec} Let $N^{\prime}
\subseteq N$ be the dual lattices of $M \subseteq M^{\prime}$ and
$\sigma$ the dual cone of $\sigma^{\vee}$. Recall that $\sigma^{\vee}$
is the positive hull of the columns of $A$ in $M_{\mathbb{R}}$ and
that the columns of $A$ are
$$
A_{p,\tau_{k}(p)} = \epsilon_{k} + e_{p} + e_{\tau_{k}(p)} -
e_{-\tau_{i}(p)} - e_{-\tau_{j}(p)}$$ for $k = 1, 2,3$ and $p \in
\vertices T$. We will need the easily checked lemma.
\begin{lemma} \label{i-orbit} If $i \ne j$ and $O$ is a $\tau_{i}$
orbit in $\vertices T$, then $ \sum_{q \in O} A_{q,\tau_{j}(q)} =
|\tau_{i}|\epsilon_{j}$.
\end{lemma}

The matrix A has the nice property that the columns generate the rays
of $\sigma^{\vee}$.
\begin{lemma} \label{rays} Each column of $A$ is a primitive generator
in $M$ for a ray of $\sigma^{\vee}$, thus $\sigma^{\vee}$ has $3n$
rays.
\end{lemma}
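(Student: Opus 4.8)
The plan is to exploit the Gorenstein/Cayley structure already visible in the columns, reducing the ray question to a statement about vertices of three auxiliary polytopes, and then to finish with the simply transitive $G$-action rather than by constructing supporting functionals by hand. First, primitivity is immediate: since $A_{p,\tau_{k}(p)} = \epsilon_{k} + e_{p} + e_{\tau_{k}(p)} - e_{-\tau_{i}(p)} - e_{-\tau_{j}(p)}$ has $\epsilon_{k}$-coordinate equal to $1$, any factorization $A_{p,\tau_{k}(p)} = m v$ with $v \in M$ and $m \in \mathbb{Z}_{>0}$ forces $m = 1$ by reading off that coordinate. Next I would record that every column pairs to $1$ against $n_{\sigma} = \epsilon_{1}^{\ast} + \epsilon_{2}^{\ast} + \epsilon_{3}^{\ast}$, so all columns lie on the height-one hyperplane and $\sigma^{\vee}$ is precisely the Cayley cone associated to the three lattice polytopes $\Delta_{k} = \operatorname{conv}\{\, e_{p} + e_{\tau_{k}(p)} - e_{-\tau_{i}(p)} - e_{-\tau_{j}(p)} : p \in \vertices T \,\}$ living in $\{a \in \mathbb{Z}^{n} : \sum a_{i} = 0\}$ and placed at the vertices $\epsilon_{1}, \epsilon_{2}, \epsilon_{3}$ of a triangle.

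The rays of a Gorenstein cone correspond to the vertices of its height-one slice $P = \operatorname{conv}(\text{columns})$. Since every generating column projects, under the $\epsilon$-coordinates, onto a vertex $\epsilon_{k}$ of the base triangle, every vertex of $P$ must project to some $\epsilon_{k}$; a one-line midpoint argument (a point over a vertex of the base can only be a midpoint of points over the same vertex) then gives the standard Cayley identity that the vertices of $P$ are exactly the vertices of the $\Delta_{k}$, each placed over its $\epsilon_{k}$. Thus the whole claim reduces to showing, for each fixed $k$, that the $n$ generating points of $\Delta_{k}$ are distinct and are all vertices of $\Delta_{k}$. Distinctness I would get from polyhedrality: the four vertices $p, \tau_{k}(p), -\tau_{i}(p), -\tau_{j}(p)$ are pairwise distinct because $\tau_{a} \neq \pm\tau_{b}$ for $a \neq b$ and $\lvert \tau_{k} \rvert \geq 3$ (the link of $p$ is an honest hexagon), so each generating point genuinely carries two $+1$'s on $\{p,\tau_{k}(p)\}$ and two $-1$'s on $\{-\tau_{i}(p),-\tau_{j}(p)\}$; matching supports then forces $p' = p$, using $\lvert \tau_{k}\rvert \neq 2$ to exclude the swap $p' = \tau_{k}(p)$.

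For the vertex property I would avoid explicit functionals and instead use symmetry. The group $G$ acts on $\mathbb{C}^{3n}$ (indeed on $\mathbb{R}^{n}$) by permuting coordinates via $e_{q} \mapsto e_{g+q}$, and a direct check gives $g \cdot \bigl(e_{p} + e_{\tau_{k}(p)} - e_{-\tau_{i}(p)} - e_{-\tau_{j}(p)}\bigr) = e_{g+p} + e_{\tau_{k}(g+p)} - e_{-\tau_{i}(g+p)} - e_{-\tau_{j}(g+p)}$, so $G$ permutes the generating set of $\Delta_{k}$ transitively and acts on $\Delta_{k}$ by affine automorphisms. The vertex set of $\Delta_{k}$ is therefore a $G$-invariant subset of a single $G$-orbit; being nonempty (a polytope on finitely many distinct points has at least one vertex, and every vertex is one of the points) it must be the entire orbit. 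Hence all $n$ generators of each $\Delta_{k}$ are vertices, and combining with the Cayley identity every column is a ray, the $3n$ rays being distinct since distinct $k$ give distinct $\epsilon$-parts and the case $p \neq p'$ with equal $k$ was excluded. The step I expect to be the main obstacle is the clean verification of the distinctness/no-cancellation claim from polyhedrality, since that is exactly where the hypothesis $\lvert \tau_{k}\rvert \geq 3$ and $\tau_{i} \neq \pm\tau_{j}$ is essential; the symmetry reduction is the key device that turns the extremality of all $3n$ generators into this single finite combinatorial check.
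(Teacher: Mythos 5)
Your proof is correct, but it takes a genuinely different route from the paper's. The paper proves the lemma by exhibiting explicit supporting functionals: for each edge $\{p,\tau_{k}(p)\}$ it sets $u_{p,\tau_{k}(p)} = 2(\epsilon_{1}^{\ast}+\epsilon_{2}^{\ast}+\epsilon_{3}^{\ast}) - (e_{p}^{\ast}+e_{\tau_{k}(p)}^{\ast}) \in N^{\prime}$ and checks directly that $0 \le \langle A_{q,\tau_{l}(q)}, u_{p,\tau_{k}(p)}\rangle \le 4$ with equality to $0$ precisely when $(q,l)=(p,k)$, so each $u_{p,\tau_{k}(p)}$ lies in $\sigma$ and cuts out the one-dimensional face spanned by the corresponding column; the polyhedrality facts ($\tau_{i}\ne\pm\tau_{j}$, $2\tau_{k}\ne 0$) enter implicitly in verifying the "only if" of that vanishing condition, exactly where you invoke them for your distinctness check. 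You instead slice at height one, use the standard fiber-over-a-vertex face argument to reduce rays of the Cayley-type cone to vertices of the three polytopes $\Delta_{k}$, and then replace any functional construction by the observation that the vertex set of $\Delta_{k}$ is a nonempty $G$-invariant subset of a single $G$-orbit of generators, hence the whole orbit. Both arguments are sound and lean on the same combinatorial input from polyhedrality. The paper's proof is shorter and entirely self-contained, at the cost of having to guess the right dual vectors; yours requires no such guess, makes the Cayley structure visible in advance (the paper only establishes it in the following proposition via \cite[Proposition 2.3]{bn:com}), proves the slightly stronger statement that each $\Delta_{k}$ has exactly $n$ vertices (a fact the paper's subsequent remark uses when noting the polytopes are simplices whenever they are $(n-1)$-dimensional), and the orbit argument generalizes to any vertex-transitive generating set. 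One small point of care that you handled but should keep explicit: primitivity and the distinctness of rays across different $k$ both come for free from reading off the $\epsilon$-coordinates, since two distinct lattice points on the height-one hyperplane can never span the same ray.
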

\begin{proof} For each edge $\{p,\tau_{k}(p)\}$ of $T$ let
$u_{p,\tau_{k}(p)} \in N^{\prime}$ be $2(\epsilon_{1}^{\ast} +
\epsilon_{2}^{\ast} + \epsilon_{2}^{\ast}) -(e^{\ast}_{p} +
e^{\ast}_{\tau_{k}(p)})$. If $A_{q,\tau_{l}(q)}$ is a column of $A$,
then $0 \le \langle A_{q,\tau_{l}(q)}, u_{p,\tau_{k}(p)} \rangle \le 4
$ and equals $0$ if and only if $p=q$ and $k=l$. Thus
$u_{p,\tau_{k}(p)} \in \sigma$ and it defines the $1$ dimensional face
spanned by the column $A_{p,\tau_{k}(p)}$.
\end{proof}

\begin{proposition} The cone $\sigma^{\vee} \subseteq M_{\mathbb{R}}$
is a Gorenstein cone. It is a Cayley cone associated to 3 lattice
polytopes.
\end{proposition}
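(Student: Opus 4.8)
The plan is to read both assertions off the explicit columns $A_{p,\tau_k(p)} = \epsilon_k + v_{p,k}$, where $v_{p,k} = e_p + e_{\tau_k(p)} - e_{-\tau_i(p)} - e_{-\tau_j(p)}$ is the component in the second summand of $M'$, together with Lemma \ref{rays}, which says these $3n$ columns are exactly the primitive generators of the rays of $\sigma^\vee$.

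For the Gorenstein property I would take $n_\sigma = \epsilon_1^* + \epsilon_2^* + \epsilon_3^*$, which lies in $N' \subseteq N$. Since each $\epsilon_k^*$ annihilates the second summand of $M'$, pairing against a column gives $\langle A_{p,\tau_k(p)}, n_\sigma\rangle = \langle \epsilon_k, \epsilon_1^*+\epsilon_2^*+\epsilon_3^*\rangle = 1$. As these columns are precisely the primitive ray generators, $n_\sigma$ certifies that $\sigma^\vee$ is Gorenstein.

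For the Cayley structure I first produce the required splitting of $M = \image A$. The projection $\pi\colon M' \to \mathbb{Z}^3$ onto the first summand sends $A_{p,\tau_k(p)} \mapsto \epsilon_k$, so $\pi$ restricts to a surjection $M \twoheadrightarrow \mathbb{Z}^3$ with kernel $L := M \cap \ker\pi$, and $\rank L = n-1$. Fixing a base vertex $p_0$ and setting $s(\epsilon_k) = A_{p_0,\tau_k(p_0)}$ gives a section of $\pi|_M$, hence a direct sum decomposition $M = L \oplus s(\mathbb{Z}^3)$ into a rank $n-1$ lattice and a rank $3$ lattice. I then put $\Delta_k = \operatorname{conv}\{A_{p,\tau_k(p)} - s(\epsilon_k) : p \in \vertices T\} = \operatorname{conv}\{v_{p,k} - v_{p_0,k}\}$; each displayed generator lies in $L$, so the $\Delta_k$ are lattice polytopes in $L$.

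It remains to check that $\sigma^\vee$ equals the Cayley cone of $\Delta_1, \Delta_2, \Delta_3$ for the decomposition $M = L \oplus s(\mathbb{Z}^3)$, and this is a routine double inclusion of positive hulls. A Cayley point $\sum_k \lambda_k\bigl(s(\epsilon_k) + x_k\bigr)$ with $\lambda_k \ge 0$ and $x_k \in \Delta_k$ rewrites, since $s(\epsilon_k) + x_k$ is a convex combination of the $A_{p,\tau_k(p)}$, as a nonnegative combination of the columns, so it lies in $\sigma^\vee$; conversely $\sum_{p,k} c_{p,k} A_{p,\tau_k(p)}$ with $c_{p,k} \ge 0$ is reassembled by setting $\lambda_k = \sum_p c_{p,k}$ and $x_k = \lambda_k^{-1}\sum_p c_{p,k}(v_{p,k} - v_{p_0,k})$ when $\lambda_k > 0$ (and taking $x_k$ arbitrary when all $c_{p,k}$ vanish). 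The main obstacle, and the only step needing real care, is the lattice bookkeeping: the Cayley designation concerns the lattice $M = \image A$ rather than the ambient $M'$, in which the $\epsilon_k$ need not lie in $M$. Routing the section $s$ through a fixed vertex is exactly what forces the polytopes to be integral for $L$ and pins down the correct $\mathbb{Z}^3$ summand inside $M$; once the splitting is fixed the positive-hull identity follows mechanically.
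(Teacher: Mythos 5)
Your proposal is correct, and its first half is the paper's argument verbatim: both take $n_{\sigma^{\vee}} = \epsilon_{1}^{\ast} + \epsilon_{2}^{\ast} + \epsilon_{3}^{\ast}$ (the paper's proof has an evident typo, writing $\epsilon_{2}^{\ast}$ twice), pair it against the columns of $A$, and invoke Lemma \ref{rays} to know those columns are precisely the primitive ray generators in $M$. For the Cayley half you genuinely diverge: the paper disposes of it in one line by citing \cite[Proposition 2.3]{bn:com}, whose criterion is a lattice surjection $M \twoheadrightarrow \mathbb{Z}^{3}$ carrying the support polytope onto the standard simplex --- exactly the projection $p_{1}$ implicit in your setup --- whereas you reprove this special case by hand: you split $M = L \oplus s(\mathbb{Z}^{3})$ via the section $s(\epsilon_{k}) = A_{p_{0},\tau_{k}(p_{0})}$ through a base vertex, set $\Delta_{k} = \operatorname{conv}\{v_{p,k} - v_{p_{0},k} : p \in \vertices T\} \subseteq L_{\mathbb{R}}$, and check the positive-hull identity by a double inclusion (your handling of the $\lambda_{k}=0$ edge case is fine). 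You also correctly isolate the one real subtlety that makes the naive splitting fail: $\epsilon_{k}$ itself generally does not lie in $M$ --- only multiples such as $\lvert\tau_{i}\rvert\epsilon_{j}$ do, by Lemma \ref{i-orbit}, and indeed $\bar{M} = \mathbb{Z}^{3} \cap M$ is a proper sublattice in Section \ref{modulisec} --- so routing the section through a vertex is exactly the right fix. What the citation buys the paper is brevity and uniformity, since the same proposition from \cite{bn:com} is invoked again for the finer Cayley structure of length $r$; what your construction buys is self-containedness and explicit polytopes, in effect anticipating the paper's subsequent remark on how to extract the $\Delta_{i}$ by choosing representatives $E_{i}$ in each cell together with a basis of $M^{\prime\prime}$, and matching the concrete basis $E_{1}, E_{2}, E_{3}$ built from columns in the $T_{7}$ computation.
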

\begin{proof} If $n_{\sigma^{\vee}} = \epsilon_{1}^{\ast} +
\epsilon_{2}^{\ast} + \epsilon_{2}^{\ast}$, then clearly $\langle
n_{\sigma^{\vee}}, A_{q,\tau_{l}(q)}\rangle = 1$ so $\sigma^{\vee}$ is
Gorenstein by Lemma \ref{rays}. It is a Cayley cone by
\cite[Proposition 2.3]{bn:com}.
\end{proof}

\begin{remark} It seems a difficult but interesting combinatorial
  problem to determine the type of polytopes these three
  are. They vary with the combinatorics of $T$. For example they are in
  general not $n-1$ dimensional though their Minkowski sum is. If they
  are $n-1$ dimensional then they must be simplices since they have
  $n$ vertices. This is the case when the corresponding $\tau_{i}$ has
  order $n$.
\end{remark}

In fact $\sigma^{\vee}$ has a finer Cayley structure. By a \emph{Cayley
   structure} on  $\sigma^{\vee}$ I mean some set of lattice polytopes  $\Delta_{1}, \dots
 , \Delta_{r}$ such that  is $\sigma^{\vee}$ is the Cayley cone
 associated to $\Delta_{1}, \dots , \Delta_{r}$. First partition 
each of the sets of type $k$ columns $\{A_{p,\tau_{k}(p)}: p
\in\vertices T\}$ into its $\tau_{k}$ orbits. This partitions the set
of all $3n$ columns into $r$ cells where $$ r= \frac{n}{|\tau_{1}|} +
\frac{n}{|\tau_{2}|} +\frac{n}{|\tau_{3}|}\, .$$ Index these cells
$o_{1}, \dots ,o_{r}$ and view $\mathbb{Z}^{r}$ as the free abelian
group on the $o_{i}$. Let $\beta_{i}$ be the standard basis element of
$\mathbb{R}^{r}$ corresponding to $o_{i}$. The orbit $o_{i}$ is of
type $k$ if it is a $\tau_{k}$ orbit of type $k$ columns.

Now define the vectors
\begin{equation} \label{them} m_{p} = \sum_{k=1}^{3}(e_{\tau_{k}(p)} -
e_{-\tau_{k}(p)}),\quad p \in \vertices T\, .
\end{equation} 
Since
\begin{align*} A_{-\tau_{k}(p),p} &= \epsilon_{k} + e_{p} +
e_{-\tau_{k}(p)} - e_{(-\tau_{i}-\tau_{k})(p)} -
e_{(-\tau_{j}-\tau_{k})(p)}\\ &= \epsilon_{k} + e_{p} +
e_{-\tau_{k}(p)} - e_{\tau_{j}(p)} - e_{\tau_{i}(p)}
\end{align*} we have $m_{p} = A_{p,\tau_{k}(p)}- A_{-\tau_{k}(p),p}$
for all $k=1,2,3$. Thus $m_{p} \in M$, so define $M^{\prime\prime}
\subset M$ to be the sublattice spanned by the $m_{p}$.

Let $\tilde{\Delta}$ be the \emph{support} of the Gorenstein cone
$\sigma^{\vee}$, i.e. the polytope $\{x \in \sigma^{\vee} : \langle
n_{\sigma^{\vee}}, x \rangle = 1\}$.

\begin{theorem} There is an exact sequence
$$
0 \to M^{\prime\prime} \to M \to \mathbb{Z}^{r} \to 0$$ where the last
map takes $A_{p,\tau_{k}(p)} \mapsto \beta_{i}$ if $A_{p,\tau_{k}(p)}
\in o_{i}$. This projection maps $\tilde{\Delta}$ surjectively on the
convex hull of $\{\beta_{1}, \dots , \beta_{r}\}$ and therefore
determines a Cayley structure of length $r$ on $\sigma^{\vee}$.
\end{theorem}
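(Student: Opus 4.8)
The theorem asserts three things: (i) that the sequence $0 \to M'' \to M \to \mathbb{Z}^r \to 0$ defined by $A_{p,\tau_k(p)} \mapsto \beta_i$ is exact; (ii) that the projection maps the support $\tilde\Delta$ surjectively onto $\operatorname{conv}\{\beta_1,\dots,\beta_r\}$; and (iii) that this yields a Cayley structure of length $r$. The plan is to establish (i) by a rank count together with an explicit identification of the kernel, then derive (ii) and (iii) as consequences of the Gorenstein/Cayley machinery recalled earlier.

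**Exactness of the sequence.** First I would verify that the assignment $A_{p,\tau_k(p)} \mapsto \beta_i$ really defines a homomorphism $\pi: M \to \mathbb{Z}^r$. This is not automatic, since the $3n$ columns $A_{p,\tau_k(p)}$ satisfy the lattice relations spanning $L$ (equivalently $\ker A$), so I must check that $\pi$ kills $\Sat L = \ker A$, i.e. that each relation among the columns is mapped to a relation among the $\beta_i$. Concretely the generating relations are $f_{1,p}+f_{2,p}+f_{3,p}=0$ and $\sum_p f_{k,p}=0$; I would check that under the orbit-collapsing map these become valid identities in $\mathbb{Z}^r$ (the second becomes, via Lemma~\ref{i-orbit}, a statement about summing over $\tau_k$-orbits, which lands on the correct $\beta_i$ up to the orbit multiplicity). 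Surjectivity of $\pi$ is clear since each $\beta_i$ is hit by any column in the cell $o_i$. The kernel obviously contains $M''$, because by construction $m_p = A_{p,\tau_k(p)} - A_{-\tau_k(p),p}$ is a difference of two columns which, I must confirm, lie in the \emph{same} cell $o_i$; this is the combinatorial heart of the argument and follows from checking that $p$ and $-\tau_k(p)$ lie in one $\tau_k$-orbit of type-$k$ columns. To get equality $\ker\pi = M''$ I would close the argument with ranks: $\operatorname{rank} M = n+2$ and $\operatorname{rank}\mathbb{Z}^r = r$, so $\ker\pi$ has rank $n+2-r$, and I would show $\operatorname{rank} M'' = n+2-r$ by counting independent $m_p$, using that $\sum_p m_p = 0$ is the only relation one expects together with those coming from the $|\tau_k|$-periodicity; then exactness follows, provided I also verify the kernel is saturated (no torsion), which it is as a kernel of a map to the free group $\mathbb{Z}^r$.

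**The geometric statement (ii) and the Cayley structure (iii).** Since $\sigma^\vee$ is Gorenstein with $n_{\sigma^\vee}=\epsilon_1^*+\epsilon_2^*+\epsilon_3^*$ and every column $A_{p,\tau_k(p)}$ pairs to $1$ with $n_{\sigma^\vee}$, the support $\tilde\Delta = \{x\in\sigma^\vee : \langle n_{\sigma^\vee},x\rangle = 1\}$ is exactly the convex hull of the $3n$ primitive ray generators (using Lemma~\ref{rays}, which guarantees the columns are precisely the rays). Under $\pi$ each such generator maps to some $\beta_i$, so $\pi(\tilde\Delta)$ is the convex hull of $\{\beta_1,\dots,\beta_r\}$; surjectivity is immediate and the reverse inclusion holds because a convex combination of vertices maps to a convex combination of their images. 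To conclude (iii) I would invoke the definition of Cayley cone from the Gorenstein/Cayley subsection: setting $L = M''$ as the rank $d':=n+2-r$ lattice and letting $\Delta_i \subseteq M''_{\mathbb{R}}$ be the fiber $\pi^{-1}(\beta_i)\cap\tilde\Delta$ (a lattice polytope in the affine slice over $\beta_i$), the description of $\sigma^\vee$ as the positive hull of $\{(\,\cdot\,,\beta_i)\}$ exhibits it as the Cayley cone of $\Delta_1,\dots,\Delta_r$. This matches the earlier ``completely split'' terminology and gives a Cayley structure of length $r$, refining the length-$3$ structure found previously.

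**Anticipated obstacle.** I expect the main difficulty to be the rank computation pinning down $\ker\pi = M''$ exactly, rather than merely $M'' \subseteq \ker\pi$. Showing $\operatorname{rank} M'' = n+2-r$ requires understanding precisely which linear relations hold among the $m_p$; these relations are governed by the orbit structure of the $\tau_k$ acting on $\vertices T$, and one must use the group-theoretic facts from Lemma~\ref{qgen} (especially that $\tau_j$ acts transitively on the $\tau_i$-orbit partition) to count them correctly. Once the rank matches, saturatedness comes for free from $\mathbb{Z}^r$ being torsion-free, and the geometric half of the theorem is then routine convexity.
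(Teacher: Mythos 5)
Your overall architecture (well-definedness, identify the kernel, then convexity plus the Batyrev--Nill citation) matches the paper's, but two steps in the exactness argument fail as written.

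First, the well-definedness check is vacuous as stated. The identities $f_{1,p}+f_{2,p}+f_{3,p}=0$ and $\sum_{p} f_{k,p}=0$ are identities among the exponent vectors in $\mathbb{Z}^{3n}$ (they were used in the paper only to compute $\rank L$); they are \emph{not} relations among the columns of $A$, and mapping a true identity anywhere yields $0=0$ and verifies nothing. The relations you must kill are the elements of $\ker A=\Sat L$, generated up to torsion by the $f_{k,p}$ themselves: for instance $f_{3,p}$ encodes $A_{p,\tau_{1}(p)}+A_{-\tau_{2}(p),p}-A_{p,\tau_{2}(p)}-A_{-\tau_{1}(p),p}=0$. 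Had you checked these, your route would in fact go through painlessly: the four columns pair off inside a common type-$1$ cell and a common type-$2$ cell, so the image in $\mathbb{Z}^{r}$ vanishes, and torsion-freeness of $\mathbb{Z}^{r}$ (which you do invoke) upgrades the vanishing from $L$ to $\Sat L$. This repaired version would actually be shorter than the paper's proof, which instead takes an arbitrary relation $\sum\alpha_{k,p}A_{p,\tau_{k}(p)}=0$, averages it over $\langle\tau_{k}\rangle$ using Lemma \ref{i-orbit}, and then proves linear independence of the orbit sums inside the invariant lattice $(\mathbb{Z}^{n+1})^{G_{k}}$ via the transitivity statements of Lemma \ref{qgen}.

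Second, and more seriously, your closing step for $\ker\pi=M''$ does not close. From $M''\subseteq\ker\pi$, equality of ranks, and saturatedness of $\ker\pi$ in $M$, you can only conclude that $\ker\pi$ is the saturation of $M''$ in $M$; a finite-index inclusion $M''\subsetneq\ker\pi$ is not excluded, and the theorem asserts equality on the nose --- equivalently that $M/M''\simeq\mathbb{Z}^{r}$ is torsion-free, which is part of what is being proved, not an input. The missing idea, implicit in the paper's one-line conclusion, is generation rather than rank: $M$ is generated by the columns and $\pi$ sends each column to a standard basis vector of $\mathbb{Z}^{r}$, so $\ker\pi$ is generated by differences of columns lying in a common cell; each cell is a cyclic $\tau_{k}$-orbit whose consecutive differences are exactly $m_{\tau_{k}(q)}=A_{\tau_{k}(q),\tau_{k}^{2}(q)}-A_{q,\tau_{k}(q)}$, and telescoping expresses every within-cell difference as a $\mathbb{Z}$-combination of the $m_{p}$. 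With this observation the rank count you flagged as the main obstacle becomes unnecessary. Your parts (ii) and (iii) are correct and coincide with the paper: $\tilde{\Delta}$ is the convex hull of the columns since each pairs to $1$ with $n_{\sigma^{\vee}}$, linear maps commute with convex hulls, and the Cayley structure is exactly the citation to Batyrev--Nill, with the polytopes $\Delta_{i}$ recovered as fibers just as in the paper's subsequent remark.
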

\begin{proof} We must show that the application $A_{p,\tau_{k}(p)}
\mapsto \beta_{i}$ gives us a well-defined morphism $M \to
\mathbb{Z}^{r}$. This would follow from the following claim: $$
\sum_{k=1}^{3}\sum_{p \in \vertices T} \alpha_{k,p}A_{p,\tau_{k}(p)} =
0 \quad \Longrightarrow \quad \sum_{A_{p,\tau_{k}(p)} \in o_{i}}
\alpha_{k,p} = 0, \quad i = 1, \dots , r \, .$$ Assume $\sum\sum
\alpha_{k,p}A_{p,\tau_{k}(p)} = 0 $ and that $o_{i}$ is of type
$k$. We have
\begin{multline*} 0 =
\sum_{m=0}^{|\tau_{k}|-1}m\tau_{k}(\sum_{l=1}^{3}\sum_{p \in \vertices
T} \alpha_{l,p}A_{p,\tau_{l}(p)}) \\ = \sum_{l=1}^{3}\sum_{p \in
\vertices T} \alpha_{l,p}\sum_{m=0}^{|\tau_{k}|-1}
A_{m\tau_{k}(p),(m\tau_{k}+\tau_{l})(p)} \\ = (\sum
\alpha_{i,p})|\tau_{k}| \epsilon_{i} + (\sum \alpha_{j,p})|\tau_{k}|
\epsilon_{j} +\sum \alpha_{k,p}\sum_{m=0}^{|\tau_{k}|-1}
A_{m\tau_{k}(p),(m+1)\tau_{k}(p)}
\end{multline*} by Lemma \ref{i-orbit}. The right hand term cannot
cancel the $\epsilon_{i}$ or $\epsilon_{j}$ term, so must also
vanish. Reindex the orbits of type $k$ so they are $o_{1}, \dots ,
o_{n/|\tau_{k}|}$. Now
$$
\sum_{p} \alpha_{k,p}\sum_{m=0}^{|\tau_{k}|-1}
A_{m\tau_{k}(p),(m+1)\tau_{k}(p)} = \sum_{i = 1}^{n/|\tau_{k}|}
(\sum_{A_{p,\tau_{k}(p)} \in o_{i}} \alpha_{k,p})
(\sum_{A_{p,\tau_{k}(p)} \in o_{i}} A_{p,\tau_{k}(p)})$$ so we must
show that the $\{\sum_{A_{p,\tau_{k}(p)} \in o_{i}} A_{p,\tau_{k}(p)}:
i = 1, \dots n/|\tau_{k}|\}$ is linearly independent.

Let $G_{k}$ be the subgroup of $G$ generated by $\tau_{k}$ acting on
$\mathbb{Z}^{n+1}$, with basis $\epsilon_{k}$ and $e_{p}, p \in
\vertices T$, with $\tau_{k}(\epsilon_{k}) = \epsilon_{k}$ and
$\tau_{k}(e_{p}) = e_{\tau_{k}(p)}$. Let $[p]$ denote the $G_{k}$
orbit of $p$ in $\vertices T$. The invariant sublattice
$(\mathbb{Z}^{n+1})^{G_{k}}$ has rank $n/|\tau_{k}|$ and is spanned by
$\epsilon_{k}$ and $\beta_{[p]} = \sum_{q \in [p]}e_{q}$. (If $n =
|\tau_{k}|$ then of course $\beta_{[p]} = 0$ and
$(\mathbb{Z}^{n+1})^{G_{k}}$ is spanned by $\epsilon_{k}$.)

Each $\sum_{A_{p,\tau_{k}(p)} \in o_{i}} A_{p,\tau_{k}(p)} \in
(\mathbb{Z}^{n+1})^{G_{k}}$. If $A_{p,\tau_{k}(p)} \in o_{i}$ then one
computes
\begin{equation}
\label{invbasis} \sum_{A_{q,\tau_{k}(q)} \in o_{i}} A_{q,\tau_{k}(q)}
= |\tau_{k}|\epsilon_{k} + 2\beta_{[p]} - \beta_{[-\tau_{i}(p)]} -
\beta_{[-\tau_{j}(p)]} \, .
\end{equation} Now both $G_{i}$ and $G_{j}$ act transitively on the
set of $G_{k}$ orbits of $\vertices T$ by $\tau_{i}([p]) =
[\tau_{i}(p)]$ and similarly for $G_{j}$ (see Lemma \ref{qgen}). So,
after choosing some $p_{0} \in \vertices T$ and setting
$\bar{\tau}_{i}$ to be the class of $\tau_{i}$ in $G/G_{k}$, index the
basis by $\beta_{[p]} = \beta_{m}$ if $[p] =
m\bar{\tau}_{i}([p_{0}])$. Moreover $[-\tau_{j}(p)] = [(\tau_{k}+
\tau_{i})(p)] = [\tau_{i}(p)]$. Thus, with new indices, the vectors in
(\ref{invbasis}) become $$ |\tau_{k}|\epsilon_{k} - \beta_{m-1} +
2\beta_{m} - \beta_{m+1}, \quad m = 0, \dots ,\frac{n}{|\tau_{k}|}
-1$$ (indexed cyclicly) and this is a linearly independent set.

Since $m_{p} = A_{p,\tau_{k}(p)}- A_{-\tau_{k}(p),p}$, for all $k$,
they generate the kernel of $M \to \mathbb{Z}^{r}$. The statement
about convex hulls follows from the description of the map. The
statement about Cayley structures is again \cite[Proposition
2.3]{bn:com}.
\end{proof}

\begin{remark} In \cite{bn:com} we are told how to find the $r$
polytopes making up the Cayley structure. The support $\tilde{\Delta}$
is the convex hull of the columns of $A$. Choose some element in each
$o_{i}$ and call it $E_{i}$ and a basis $E_{r+1}, \dots , E_{n+2}$ for
$M^{\prime\prime}$. Thus $\{E_{1}, \dots , E_{n+2}\}$ is a basis for
$M$. Let $E_{i}^{\ast}$ be the dual basis and set for $i = 1, \dots ,
r$ $$ \tilde{\Delta}_{i} = \{x \in \tilde{\Delta} : \langle x,
E_{j}^{\ast} \rangle = 0 \text{ for $j \in \{1, \dots, r\} \setminus
\{i\}$} \} \, .$$ Write $\tilde{\Delta}_{i} = \Delta_{i} \times E_{i}$
where $\Delta_{i}$ is a lattice polytope in
$M^{\prime\prime}_{\mathbb{R}}$. The cone $\sigma^{\vee}$ is the
Cayley cone associated to $\Delta_{1}, \dots \Delta_{r}$.
\end{remark}

\section{Deformations} \label{def}

We may pull back the family over $S$ to the normalization
$\widetilde{S}$, which is finite and generically injective over
$S$. Thus if we are only interested in which fibers occur, then we may
as well work on $\widetilde{S}$.

Let $R$ be the local ring of $\Def_{X}^{a}$. In the proof of Theorem
\ref{ac} in \cite{ac:def} we constructed a \emph{local formal model}
of the versal family over $\Def_{X}^{a}$. That is a collection
$\mathcal{U}_{p}$, $p \in \vertices T$, of affine schemes and
deformations $\mathcal{U}_{p} \to \Def_{X}^{a}$ of $U_{p}$ such that
over $R_{n} = R/\mathfrak{m}^{n+1}$, the $\mathcal{U}_{p}
\times_{\Def^{a}_{X}} \Spec R_{n}$ could be glued to form a formal
versal deformation $X_{n} \to \Spec R_{n}$. Thus if $\mathcal{X} \to
\Def_{X}^{a}$ is a formally versal deformation, then $$ (\mathcal{X}
\times_{\Def^{a}_{X}} \Spec R_{n})|_{U_{p}} \simeq \mathcal{U}_{p}
\times_{\Def^{a}_{X}} \Spec R_{n}$$ as formal deformations of $U_{p}$.

We may therefore apply the following application of Artin
approximation.
\begin{theorem} \label{aa} Let $R$ be a local $k$-algebra and assume
$X \to \Spec R$ and $Y \to \Spec R$ are two deformations of $X_{0}$
with isomorphic associated formal deformations. Then $X \setminus
X_{0}$ is smooth near $X_{0}$ if and only if $Y \setminus X_{0}$ is
smooth near $X_{0}$.
\end{theorem}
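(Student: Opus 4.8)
The plan is to reduce the global statement about smoothness of $X\setminus X_0$ near $X_0$ to a purely local statement over each chart $U_p$, and then invoke Artin approximation to replace the formal isomorphism with an honest étale-local one. The key observation is that smoothness is an open condition that can be checked in a Zariski (or étale) neighborhood of $X_0$, and on such a neighborhood the scheme $X$ is covered by the charts $U_p$. So ``$X\setminus X_0$ smooth near $X_0$'' is equivalent to ``for every $p\in\vertices T$, the restriction $X|_{U_p}\setminus X_0$ is smooth near $X_0$,'' and likewise for $Y$. This transforms the problem into a statement one can attack chart by chart.

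First I would make precise what ``isomorphic associated formal deformations'' buys us. By hypothesis the formal completions of $X\to\Spec R$ and $Y\to\Spec R$ along $X_0$ are isomorphic as formal deformations; by the local formal model discussion preceding the theorem, this in particular gives, for each $p$, an isomorphism of the formal deformations $(X\times_{\Spec R}\Spec R_n)|_{U_p}\simeq (Y\times_{\Spec R}\Spec R_n)|_{U_p}$ compatible as $n$ varies, i.e. an isomorphism after $\mathfrak m$-adic completion. Next I would apply the Artin approximation theorem: the functor assigning to an $R$-algebra the set of isomorphisms between the two (finite-type) deformations restricted to a chart is of finite type, so a formal solution can be approximated by a solution over an étale neighborhood of the closed point of $\Spec R$. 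Concretely, Artin approximation yields an isomorphism $X|_{U_p}\simeq Y|_{U_p}$ over some common étale-local base, identifying $X_0$ with $X_0$.

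Once one has such an honest isomorphism over an étale neighborhood, smoothness of the complement of $X_0$ is manifestly preserved, because smoothness is étale-local on both source and base and is detected by the formal fibers (it is a property that ascends and descends along faithfully flat maps such as completion and étale covers). So $X|_{U_p}\setminus X_0$ is smooth near $X_0$ if and only if $Y|_{U_p}\setminus X_0$ is. Running this over all $p$ and reassembling via the cover $\{U_p\}$ gives the global equivalence claimed in the theorem.

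The main obstacle is the careful bookkeeping in the approximation step: Artin approximation produces an isomorphism only over an étale neighborhood, not over $\Spec R$ itself, and one must check that this suffices to compare smoothness of the complements—this is where the fact that smoothness near a point is an étale-local and formally-local condition does the real work. A secondary subtlety is that the isomorphisms furnished by the local formal model are only given over the Artinian truncations $R_n$ and must first be assembled into a genuine isomorphism of formal (completed) deformations before Artin approximation applies; here one uses that the relevant isomorphism functor is smooth or at least has vanishing obstructions in the sense needed to pass from the compatible system $\{R_n\}$ to the completion. I expect the verification that smoothness is insensitive to the passage between $\Spec R$, its completion, and the étale neighborhood to be the most delicate point, though it is standard.
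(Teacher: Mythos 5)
Your overall strategy---use Artin approximation to promote the formal isomorphism to an \'{e}tale-local one, and then exploit the fact that smoothness is an \'{e}tale-local condition---is the right idea and is the paper's idea too. But your concrete implementation has a genuine gap at exactly the step you flag. You propose to apply approximation to the isomorphism functor over the base; a formal solution in Artin's sense must be an element of $F(\hat R) = \mathrm{Isom}\bigl(X \times_{\Spec R} \Spec \hat R,\, Y \times_{\Spec R} \Spec \hat R\bigr)$, whereas the hypothesis only supplies a compatible system of isomorphisms over the truncations $R_n$, i.e.\ an element of $\varprojlim_n F(R/\mathfrak{m}^{n+1})$. Passing from the latter to the former is an \emph{effectivity} (algebraization) problem: for the affine charts you work with, the families are not proper over $R$, so there is no Grothendieck existence theorem, and the map $F(\hat R) \to \varprojlim_n F(R/\mathfrak{m}^{n+1})$ is in general not surjective (already for automorphisms of affine space a compatible system of truncated isomorphisms need not come from one over $\hat R$). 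Your proposed fix---that the isomorphism functor ``is smooth or has vanishing obstructions''---does not repair this: unobstructedness concerns lifting along small extensions, while what you need is effectivity of the formal object, a logically independent condition; moreover the fibers here are singular (the central fiber is a Stanley-Reisner scheme), so there is no reason for the Isom functor to be smooth in the first place. A secondary point: approximation over the base requires $R$ to be excellent henselian or a henselization of an essentially finite type algebra, a hypothesis not in the statement.

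The paper's proof avoids all of this by working pointwise on the total spaces rather than with a relative isomorphism. For each $x \in X_0$, the isomorphism of formal deformations immediately yields $\hat{\mathcal{O}}_{X,x} \simeq \hat{\mathcal{O}}_{Y,x}$ (complete the compatible isomorphisms over the $R_n$ at $x$), and then \cite[Corollary 2.6]{ar:alg} states precisely that two schemes of finite type over a field with isomorphic complete local rings at given points are locally isomorphic for the \'{e}tale topology near those points. Smoothness of $X \setminus X_0$ at points near $x$ then transfers through the common \'{e}tale neighborhood, and one quantifies over $x \in X_0$. This pointwise formulation needs no isomorphism over $\hat R$ (hence no algebraization and no smoothness of any Isom functor), and no approximation property for $R$ itself---only that the total spaces are of finite type over $k$. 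If you want to keep your chart-by-chart framework (which is in any case unnecessary for the abstract statement---any affine cover, or the pointwise argument, suffices), the correct repair is to replace your global Isom-functor step by this pointwise application of Artin's corollary.
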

\begin{proof} Let $x \in X_{0}$. By assumption
$\hat{\mathcal{O}}_{X,x} \simeq \hat{\mathcal{O}}_{Y,x}$, thus by the
variant of Artin approximation theorem in \cite[Corollary
2.6]{ar:alg}, $X$and $Y$ are locally isomorphic for the \'{e}tale
topology near $x$.
\end{proof}

We know the $\mathcal{U}_{p}$ in detail - see \cite[Proof of
6.10]{ac:def}. Label the coordinates of $\mathbb{P}^{n-1}$ by $x_{p}$,
$p \in \vertices T$. On $U_{p}$ denote the $6$ coordinates by
$y_{p,\pm k} = x_{\pm \tau_{k}(p)}/x_{p}$, $k= 1,2,3$. Then
$\mathcal{U}_{p}$ is defined by the ideal generated by the $9$
equations
\begin{equation}
\label{Up}
\begin{split} &y_{p, \mp i}y_{p, \mp j} +t_{p,\pm \tau_{k}(p)}
y_{p,\pm k} \quad k = 1,2,3 \\ &y_{p, k}y_{p, -k} -t_{p,
-\tau_{i}(p)}t_{p, \tau_{j}(p)}\quad k = 1,2,3 \, .
\end{split}
\end{equation} Recall that $t_{p, -\tau_{i}(p)}t_{p, \tau_{j}(p)} =
t_{p, \tau_{i}(p)}t_{p, -\tau_{j}(p)}$ in $\Def^{a}_{X}$ so the last
equation makes sense.

Note that if the coordinates of $\Def_{X}^{a}$ corresponding to edges
of the same type are equated, $t_{p, \tau_{k}(p)} = t_{q,
\tau_{k}(q)}$ for all $p,q \in \vertices T$, the minors of the
matrices (\ref{minors}) vanish. This defines a smooth $3$-dimensional
subspace $\mathcal{M}$ of $\Def_{X}^{a}$. Recall from
Section~\ref{simp} that the action of $G$ on $\Def_{X}^{a}$ is the
same as the action on the edges of $T$, i.e. $g \cdot t_{pq} =
t_{g(p),g(q)}$. It follows immediately that $\mathcal{M} =
{(\Def_{X}^{a})}^{G}$.

In toric terms we may describe $\mathcal{M}$ this way. Consider the
projection on the first factor $p_{1}: M^{\prime} \to
\mathbb{Z}^{3}$. The restriction to $M$ is surjective and the induced
map $M_{\mathbb{R}} \to \mathbb{R}^{3}$ maps $\sigma^{\vee}$ and
$\mathbb{S}$ onto the positive octant. Thus we have a closed embedding
of $\mathbb{C}^{3}= \Spec \mathbb{C}[\mathbb{Z}_{\ge 0}^{3}]$ into
both $\Spec \mathbb{C}[M \cap \sigma^{\vee}]$ and $\Spec
\mathbb{C}[\mathbb{S}]$. It follows that $\mathcal{M}$ lies in the
toric component $S$. The inclusion $\mathcal{M} \subset S$ is the
surjection $\mathbb{C}[\mathbb{S}] \to \mathbb{C}[\mathbb{Z}_{\ge
0}^{3}]$ induced by the projection $p_{1}: M \to \mathbb{Z}^{3}$.

Let $K$ be the image of $M$ under the projection on the second
factor $$ M^{\prime}\to \{(a_{0}, \dots , a_{n-1}) \in \mathbb{Z}^{n}
: \sum a_{i} = 0\}\, .$$ This yields an inclusion $T_{K} \subseteq
T_{M}$ which corresponds to the natural
$(\mathbb{C}^{*})^{n}/\mathbb{C}^{*}$ action on $\Def_{X}^{a}$ induced
by the action on $X$ (see Section~\ref{simp}). Now $T_{M}$ and
therefore $T_{K}$ are subspaces of both $S$ and $\widetilde{S}$. We
will need the following easily proven lemma.

\begin{lemma} \label{tktm} Every point in $T_{M}$ is in a $T_{K}$
orbit of a point in $T_{M} \cap \mathcal{M}$.
\end{lemma}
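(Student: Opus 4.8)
The plan is to reformulate the lemma as the assertion that the multiplication map
$$ \mathrm{mult}\colon T_K \times (T_M \cap \mathcal{M}) \longrightarrow T_M $$
is surjective, where both factors are viewed as subtori of $T_M = \Hom(M,\mathbb{C}^{*})$ and $T_K$ acts by the group law. Indeed, the phrase ``every point of $T_M$ lies in a $T_K$-orbit of a point of $T_M \cap \mathcal{M}$'' says exactly that $T_M = T_K \cdot (T_M \cap \mathcal{M})$, which is the image of $\mathrm{mult}$.

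First I would identify the two subtori through the projections introduced in Section~\ref{def}. The subspace $\mathcal{M}$ sits in $S$ via the surjection $p_{1}\colon M \to \mathbb{Z}^{3}$, so its open torus $T_M \cap \mathcal{M}$ is the image of the dual inclusion $(\mathbb{Z}^{3})^{\ast} \hookrightarrow T_M$; concretely it is $\{\chi \in \Hom(M,\mathbb{C}^{*}) : \chi|_{\ker p_{1}} = 1\} = \Ann(\ker p_{1})$, with character lattice $M/\ker p_{1} \cong \mathbb{Z}^{3}$. In the same way $T_K = \Ann(\ker p_{2})$ is cut out by $p_{2}\colon M \to K$, with character lattice $M/\ker p_{2} \cong K$.

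Next I would invoke the standard criterion that a homomorphism of tori is surjective if and only if the dual homomorphism of character lattices is injective. The character map dual to $\mathrm{mult}$ is
$$ M \longrightarrow (M/\ker p_{2}) \oplus (M/\ker p_{1}), \qquad m \longmapsto (m + \ker p_{2},\; m + \ker p_{1}), $$
whose kernel is precisely $\ker p_{1} \cap \ker p_{2}$. It therefore remains to check that this intersection is trivial. But $p_{1}$ and $p_{2}$ are the restrictions to $M$ of the two coordinate projections of $M^{\prime} = \mathbb{Z}^{3} \oplus \{(a_{0},\dots,a_{n-1}) : \sum a_{i} = 0\}$, so an element of $M \subseteq M^{\prime}$ annihilated by both projections is the zero element of $M^{\prime}$. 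Hence $\ker p_{1} \cap \ker p_{2} = 0$, the character map is injective, and $\mathrm{mult}$ is surjective.

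The only real content lies in the first step, namely correctly reading off that $T_M \cap \mathcal{M}$ and $T_K$ are the annihilators of $\ker p_{1}$ and $\ker p_{2}$, together with recalling the torus-surjectivity criterion; once these are in place the decisive identity $\ker p_{1} \cap \ker p_{2} = 0$ is immediate from the embedding $M \hookrightarrow M^{\prime} = \mathbb{Z}^{3} \oplus \{\sum a_{i} = 0\}$, which is why the lemma is ``easily proven''. As an alternative to the surjectivity criterion one may apply the exact functor $\Hom(-,\mathbb{C}^{*})$ to the Mayer--Vietoris sequence
$$ 0 \to M/(L_{1} \cap L_{2}) \to M/L_{1} \oplus M/L_{2} \to M/(L_{1} + L_{2}) \to 0, $$
with $L_{1} = \ker p_{1}$ and $L_{2} = \ker p_{2}$, which yields $\Ann(L_{1}) \cdot \Ann(L_{2}) = \Ann(L_{1} \cap L_{2}) = \Ann(0) = T_M$ directly.
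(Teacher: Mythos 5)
Your proof is correct; in fact the paper gives no argument at all here — the lemma is introduced as ``easily proven'' — and your write-up is precisely the routine verification the author had in mind. Identifying $T_{M} \cap \mathcal{M} = \Ann(\ker p_{1}|_{M})$ and $T_{K} = \Ann(\ker p_{2}|_{M})$, noting that $\ker p_{1}|_{M} \cap \ker p_{2}|_{M} = 0$ because $M$ embeds in $M^{\prime} = \mathbb{Z}^{3} \oplus \{(a_{0},\dots,a_{n-1}) : \sum a_{i} = 0\}$ with $p_{1},p_{2}$ the coordinate projections, and then dualizing (using that $\Hom(-,\mathbb{C}^{\ast})$ is exact since $\mathbb{C}^{\ast}$ is divisible) to get $T_{M} = \Ann(\ker p_{1}|_{M}) \cdot \Ann(\ker p_{2}|_{M})$ is exactly the intended argument, and all the inputs you use ($p_{1}|_{M}$ and $p_{2}|_{M}$ surjective onto $\mathbb{Z}^{3}$ and $K$, and the $T_{K}$-action being translation in the torus) are as stated in Sections \ref{ov} and \ref{def}.
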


In terms of deformations
\begin{lemma} \label{tkorb} The fibers over a $T_{K}$ orbit in $S$ (or
$\widetilde{S}$) are isomorphic.
\end{lemma}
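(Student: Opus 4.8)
The plan is to exhibit the $T_K$ action as a genuine action by automorphisms of the central fibre $X$ that lifts to the total space of the versal family, so that any two fibres in a common orbit are abstractly isomorphic. Recall from Section~\ref{simp} that the torus $(\mathbb{C}^*)^n/\mathbb{C}^*$ acts on $X \subseteq \mathbb{P}^{n-1}$ by $x_p \mapsto \lambda_p x_p$, and that $T_K$ is precisely the image of this torus acting on $\Def_X^a$ via the contragredient action on the coordinates $t_{p,q}$. Thus it suffices to show that the combined action --- standard scaling on the homogeneous coordinates of $\mathbb{P}^{n-1}$ together with the contragredient action on the base --- preserves the defining equations of the versal family, realized as a family of closed subschemes of $\mathbb{P}^{n-1}$.

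First I would work chart by chart using the local formal models (\ref{Up}). On $U_p$ the coordinate $y_{p,\pm k} = x_{\pm\tau_k(p)}/x_p$ scales by $\lambda_{\pm\tau_k(p)}/\lambda_p$, while $t_{p,\tau_k(p)}$ scales by $\lambda_{-\tau_i(p)}\lambda_{-\tau_j(p)}/(\lambda_p\lambda_{\tau_k(p)})$, using that the link of the edge $\{p,\tau_k(p)\}$ is $\{-\tau_i(p),-\tau_j(p)\}$. A direct bookkeeping then shows that in each of the nine equations (\ref{Up}) every monomial acquires the same scalar factor; for instance both $y_{p,-i}y_{p,-j}$ and $t_{p,\tau_k(p)}y_{p,k}$ scale by $\lambda_{-\tau_i(p)}\lambda_{-\tau_j(p)}/\lambda_p^2$, and similarly for the second family of equations. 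Hence the action carries $\mathcal{U}_p$ isomorphically to the chart over the translated base point, compatibly with the restriction of the minors (\ref{minors}).

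Since the charts are glued using the transition functions of $\mathbb{P}^{n-1}$ and the diagonal action is the restriction of the globally defined scaling on $\mathbb{P}^{n-1}\times\Def_X^a$, the local isomorphisms patch to an equivariant isomorphism of the total family covering the base action. Consequently, for $\lambda$ in $T_K$ the fibre over $s$ is isomorphic to the fibre over $\lambda\cdot s$, which proves the claim over $S$. For $\widetilde{S}$ one pulls back along the normalization $\widetilde{S}\to S$, which is equivariant for the torus action (the normalization of a toric variety is canonically a toric variety), so fibres over a $T_K$ orbit in $\widetilde{S}$ lie over a single orbit in $S$ and are again isomorphic. The only delicate point is the globalization in the third step: one must check that the chartwise isomorphisms are compatible with the gluing data, which is automatic here precisely because the action is induced by a single linear action on the ambient projective space rather than assembled abstractly via versality.
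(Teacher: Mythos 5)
Your proposal is correct and is essentially the paper's own argument: the proof of Lemma~\ref{tkorb} consists precisely in noting that $T_{K}$ acts as automorphisms on the charts $\mathcal{U}_{p}$ compatibly with the formal gluing (citing the construction in the proof of Theorem~6.10 of \cite{ac:def}), which is the chartwise equivariance computation you carry out explicitly on the equations (\ref{Up}). The only cosmetic difference is your justification of the gluing compatibility via a global linear action on $\mathbb{P}^{n-1}\times\Def_{X}^{a}$ --- strictly the family is only assembled by \emph{formal} gluing of the $\mathcal{U}_{p}$, so one should say the action is compatible with that formal gluing data, which is exactly what the paper asserts.
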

\begin{proof} This probably follows from general principles since the
action of $T_{K}$ is induced by $\exp$ of the Lie algebra action of
$H^{0}(X, \Theta_{X})$ on $T^{1}_{X}$. One sees this directly though
by noting that $T_{K}$ acts as automorphisms on $\mathcal{U}_{p}$
compatible with the formal gluing (see \cite[Proof of 6.10]{ac:def}).
\end{proof}

\begin{theorem} \label{disc} The main components are the only smoothing components
of $\Def_{X}^{a}$ and the discriminant of $\widetilde{S}$ is
$\widetilde{S} \setminus T_{M}$.
\end{theorem}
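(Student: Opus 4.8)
The plan is to reduce both assertions to a single local criterion: the fibre of the (normalised) family over a point $x$ is smooth if and only if none of the edge-coordinates $t_{p,\tau_{k}(p)}$ vanishes at $x$, and on $\widetilde{S}$ this happens exactly when $x\in T_{M}$. The equivalence ``$x\in T_{M}\iff$ all $t_{p,\tau_{k}(p)}(x)\neq 0$'' is pure toric geometry: by Lemma~\ref{rays} the $A_{p,\tau_{k}(p)}$ are the primitive ray generators of $\sigma^{\vee}$, so a point $x$ outside the open orbit lies in some orbit $O_{\tau}$ with $\tau$ a face of $\sigma$ of positive dimension, and since $\sigma^{\vee}$ is full dimensional it cannot be contained in $\tau^{\perp}$; hence at least one ray generator $A_{p_{0},\tau_{k_{0}}(p_{0})}\notin\tau^{\perp}$, i.e. $t_{p_{0},\tau_{k_{0}}(p_{0})}(x)=0$. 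Conversely every monomial is invertible on $T_{M}$. Thus it remains to establish the two halves of the local criterion.

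For smoothness over $T_{M}$ I would first use Lemmas~\ref{tktm} and~\ref{tkorb} to reduce to fibres over $T_{M}\cap\mathcal{M}$, i.e. to the uniform deformations in which $t_{p,\tau_{k}(p)}$ depends only on the type $k$. On such a fibre all edge-coordinates are nonzero, so the second group of equations in (\ref{Up}), namely $y_{p,k}y_{p,-k}=t_{p,-\tau_{i}(p)}t_{p,\tau_{j}(p)}$, forces every $y_{p,\pm k}$ to be invertible in the chart $\mathcal{U}_{p}$; a direct computation of the Jacobian of the nine equations (\ref{Up}) at such a point shows it has rank $4$, so the fibre is smooth of dimension $2$ in each chart. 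Gluing the charts gives a smooth fibre, necessarily an abelian surface by the cohomological computation of Section~\ref{simp}. This shows the discriminant of $\widetilde{S}$ is contained in $\widetilde{S}\setminus T_{M}$, and since $T_{M}$ is dense in $S$ the same computation shows each main component $S_{\rho}\cong S$ is a smoothing component.

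For the reverse inclusion I must show that the vanishing of a single edge-coordinate $t_{p,\tau_{k}(p)}$ already forces the fibre to be singular; this is the heart of the theorem and the step I expect to be the main obstacle. Working in the chart $\mathcal{U}_{p}$, setting $t_{p,\tau_{k}(p)}=0$ makes the right-hand side of exactly one of the second-group equations of (\ref{Up}) vanish, so that $y_{p,m}y_{p,-m}=0$ on the fibre for the appropriate index $m$, while a first-group equation degenerates to $y_{p,-i}y_{p,-j}=0$. The plan is to exhibit on the branch $y_{p,m}=0$ an explicit point of the fibre at which the Jacobian of (\ref{Up}) drops below rank $4$, producing a singular point; equivalently, the unsmoothed edge leaves a rational curve in the fibre, and since a smooth fibre would be an abelian surface it can contain no such curve. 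Either formulation reduces to a finite but delicate rank computation with the explicit equations (\ref{Up}). Combined with the toric argument above this identifies the discriminant of $\widetilde{S}$ as exactly $\widetilde{S}\setminus T_{M}$.

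Finally, for the statement that the main components are the only smoothing components, I would invoke the binomial structure of $I$. By Theorem~\ref{es} and the discussion in Section~\ref{latticeid}, every minimal prime of $I$ other than the $I_{\Sat L,\rho}$ must contain some variable $t_{p,\tau_{k}(p)}$; hence any non-main component is contained in a coordinate hyperplane $\{t_{p,\tau_{k}(p)}=0\}$, on which the general fibre is singular by the local criterion just established. Therefore no such component is a smoothing component, and together with the second paragraph the main components are precisely the smoothing components of $\Def_{X}^{a}$.
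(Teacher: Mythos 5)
Your overall skeleton matches the paper's (orbit--cone correspondence to identify $T_{M}$ as the locus where all $t_{p,\tau_{k}(p)}$ are invertible, the chart equations (\ref{Up}), reduction via Lemmas~\ref{tktm} and~\ref{tkorb}, and Theorem~\ref{es} for the non-main components), but there is a genuine gap at the point where you pass from the local models to the versal family: the $\mathcal{U}_{p}$ are glued only \emph{formally}, i.e.\ over the Artinian truncations $\Spec R_{n}$, so there is no actual glued family over a point $x\neq 0$ of the base, and the sentence ``gluing the charts gives a smooth fibre'' does not produce a fibre of $\mathcal{X}$ at all. The bridge the paper uses is Theorem~\ref{aa} (Artin approximation): two deformations with isomorphic formal completions have the same smoothness behaviour of the total space \emph{near the special fibre} $X_{0}$. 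Since that comparison only works near $X_{0}$, one cannot apply it pointwise at $x\in T_{M}$; the paper's device is to restrict to one-parameter subfamilies $C\subset\mathcal{M}$ given by $t_{p,\tau_{1}(p)}=at$, $t_{p,\tau_{2}(p)}=bt$, $t_{p,\tau_{3}(p)}=ct$, observe that for $abc\neq 0$ the charts over $C$ have an isolated singularity at $0$, and then invoke Theorem~\ref{aa} together with generic smoothness to get a one-parameter smoothing, whose nearby fibres are automatically all smooth; Lemmas~\ref{tktm} and~\ref{tkorb} then extend this from $T_{M}\cap\mathcal{M}$ to $T_{M}$ exactly as you propose. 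Your Jacobian-rank computation on the chart fibres could replace the isolated-singularity check, but only after being routed through such a curve through the origin and Theorem~\ref{aa}; as written, your conclusion about fibres of $\mathcal{X}$ does not follow.

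Separately, the step you single out as ``the heart of the theorem'' and plan to settle by a ``delicate rank computation'' is in fact immediate, and your own text already contains the reason: when $t_{p,\tau_{k}(p)}=0$, a first-group equation of (\ref{Up}) degenerates to $y_{p,-i}y_{p,-j}=0$, so the chart of the local-model fibre is \emph{reducible}, in particular singular --- no search for a point of low Jacobian rank is needed, and no appeal to rational curves on abelian surfaces (which would anyway require first knowing that a smooth fibre over a non-main component must be an abelian surface, something you do not establish). Combined with Theorem~\ref{es}, this disposes of the non-main components, and via the orbit--cone argument you correctly give, of all of $\widetilde{S}\setminus T_{M}$ --- again modulo the same Artin-approximation transfer, which is needed in this direction too in order to conclude that $\mathcal{X}$ itself has no smooth fibres over these loci.
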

\begin{proof} By Theorem \ref{es}, on a non-main component some $t_{p,
\tau_{k}(p)} = 0$. After looking at the equations (\ref{Up}) of the
local formal model we conclude that $\mathcal{U}_{p}$ will be
singular, in fact reducible. But then by Theorem \ref{aa},
$\mathcal{X}$ cannot contain a smooth fiber over this component. By
standard toric geometry the same argument applies to fibers over
$\widetilde{S} \setminus T_{M}$.

It remains to show that the fibers over $T_{M}$ are smooth. Consider a
one parameter sub-family over $C \subset \mathcal{M} \subseteq S$. We
may assume $C$ is given by $t_{p, \tau_{1}(p)} = at, t_{p,
\tau_{2}(p)} = bt, t_{p, \tau_{3}(p)} = ct$ for some $a,b,c \in
\mathbb{C}$. Plug this into the equations (\ref{Up}) and one sees that
if $abc \neq 0$ the charts over this curve have an isolated
singularity at $0$. Thus if $abc \neq 0$, Theorem \ref{aa} and generic
smoothness imply that $\mathcal{X}_{|C}$ is a smoothing. Since it is a
one-parameter smoothing the nearby fibers will all be smooth. Thus
$T_{M} \cap \mathcal{M}$ has only smooth fibers, but then by
Lemma~\ref{tktm} and Lemma~\ref{tkorb} the same is true for $T_{M}$.
\end{proof}

\section{Moduli}\label{modulisec} I will construct the Heisenberg
group $H_{(d,n/d)}$ from $G$. After choosing an origin in $\{3,6\}$
there is a one to one correspondence $G \to \vertices T$ given by
$\tau \mapsto \tau(0)$.  As before label the coordinates of
$\mathbb{P}^{n-1}$ by $x_{p}$, $p \in \vertices T$.

The group $G \subseteq \Aut T$ acts on the coordinate functions by
$\tau(x_{p}) = x_{-\tau(p)}$ and $G^{\ast}$ acts by $\sigma(x_{p}) =
{\sigma(\tau_{p})}^{-1}x_{p}$. Taken together this defines an
inclusion $G \oplus G^{\ast} \hookrightarrow
\PGL_{n}(\mathbb{C})$. Now construct a Heisenberg group $H_{T} \simeq
H_{(d,n/d)}$ with Schr\"{o}dinger representation as in
Section~\ref{heis}.

\begin{lemma} \label{inv} A point in $\Def^{a}_{X}$ is $H_{T}$
invariant if and only if it is $G$ invariant.
\end{lemma}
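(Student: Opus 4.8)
I want to prove that a point $\xi \in \Def^a_X$ is $H_T$ invariant if and only if it is $G$ invariant. Since $H_T$ is the central extension of $G \oplus G^\ast$ by the scalars $\mu_{n/d}$, and since scalars act trivially on the projective object, invariance under $H_T$ is the same as invariance under the induced action of $G \oplus G^\ast$ on $\Def^a_X$. So the statement to prove is really that a point is invariant under $G \oplus G^\ast$ if and only if it is invariant under $G$ alone; equivalently, that every $G$ invariant point is automatically $G^\ast$ invariant. The reverse direction ($H_T$ invariant $\Rightarrow$ $G$ invariant) is immediate once one knows the $G$ action factors through $H_T$, so the real content is $G$ invariant $\Rightarrow$ $G^\ast$ invariant.

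First I would make the two actions on the tangent coordinates explicit. The group $G$ acts on edges, hence on the coordinates $t_{p,q}$ by $g\cdot t_{p,q}=t_{g(p),g(q)}$ as recalled just before Lemma~\ref{tktm}; in particular it permutes the three type classes among themselves and acts within each type by translation. The group $G^\ast$ acts through the natural $(\mathbb{C}^\ast)^n/\mathbb{C}^\ast$ action of Section~\ref{simp}: a character $\sigma$ corresponds to the diagonal $\lambda_p=\sigma(\tau_p)^{-1}$, and by the contragredient formula there $t_{p,\tau_k(p)}\mapsto (\lambda_{-\tau_i(p)}\lambda_{-\tau_j(p)}/\lambda_p\lambda_{\tau_k(p)})\,t_{p,\tau_k(p)}$. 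The key computation is that, with $\lambda_p=\sigma(\tau_p)^{-1}$ multiplicative in $p$, this scaling factor at the edge $\{p,\tau_k(p)\}$ depends only on the \emph{type} $k$ and not on the vertex $p$: one gets the factor $\sigma$ evaluated on $w_{p,\tau_k(p)}=e_p+e_{\tau_k(p)}-e_{-\tau_i(p)}-e_{-\tau_j(p)}$, and translating this weight vector by any $\tau$ changes it by a translate, so its pairing with a homomorphism $G\to\mathbb{C}^\ast$ is translation invariant, i.e. constant along each type orbit.

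With that in hand the proof is short. Recall $\mathcal{M}=(\Def^a_X)^G$ is precisely the subspace where coordinates of the same type are equated, $t_{p,\tau_k(p)}=t_{q,\tau_k(q)}$ for all $p,q$. On such a point the $G^\ast$ action by the above scales all coordinates of type $k$ by one common scalar $c_k=\sigma(w_{\cdot,\tau_k})$; since the three coordinates within a type are already equal, this just replaces the common type-$k$ value by $c_k$ times itself, and the result is again a point with equated type coordinates. To conclude genuine \emph{invariance} I must show $c_k=1$ for each $k$ on any $G$ invariant point. This is where the relation $\tau_1+\tau_2+\tau_3=0$ enters: summing the weights over a full orbit, Lemma~\ref{i-orbit} gives $\sum_{q} w_{q,\tau_k(q)}$ proportional to a coordinate direction, so the product $\prod_k c_k$ and the individual $c_k$ are forced to be $1$ by the defining relations $\sum_p f_{k,p}=0$ and $f_{1,p}+f_{2,p}+f_{3,p}=0$ of the lattice $L$. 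Thus a $G$ invariant point is fixed by $G^\ast$, completing both directions.

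\textbf{The main obstacle.} The delicate point is the last one: verifying that the common type scalars $c_k$ are genuinely trivial on $\mathcal{M}$, rather than merely showing $G^\ast$ preserves $\mathcal{M}$ setwise. I expect to handle this by evaluating the characters on the weight vectors and using exactly the two families of relations in $L$ (the per-vertex relation $f_{1,p}+f_{2,p}+f_{3,p}=0$ and the per-type relation $\sum_p f_{k,p}=0$) together with $\tau_1+\tau_2+\tau_3=0$, which pin down the scalars to be $1$; all the other steps are the routine unwinding of the two explicit actions.
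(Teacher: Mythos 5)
Your reduction is sound and matches the paper's in spirit: the center of $H_{T}$ acts by scalars, hence trivially on $X \subset \mathbb{P}^{n-1}$ and on $\Def^{a}_{X}$, so everything comes down to whether $G$-invariance forces $G^{\ast}$-invariance. But your closing step has a genuine gap, and it obscures the fact that makes the lemma true. Carry out the evaluation you yourself set up: with $\lambda_{q} = \sigma(\tau_{q})^{-1}$ multiplicative in $q$, the factor by which $\sigma \in G^{\ast}$ scales $t_{p,\tau_{k}(p)}$ is
\[
\frac{\lambda_{-\tau_{i}(p)}\lambda_{-\tau_{j}(p)}}{\lambda_{p}\lambda_{\tau_{k}(p)}}
= \sigma(\tau_{i})\,\sigma(\tau_{j})\,\sigma(\tau_{k})
= \sigma(\tau_{1}+\tau_{2}+\tau_{3}) = \sigma(0) = 1
\]
for \emph{every} $p$ and every $k$ (the $\sigma(\tau_{p})$-contributions cancel because the weight vector has coefficient sum zero). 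So the induced $G^{\ast}$-action is not merely constant on each type class: it is trivial on all of $\Def^{a}_{X}$, not just on $\mathcal{M}$. This is exactly the paper's one-line proof, which quotes the proof of Proposition~\ref{ext} to identify $G^{\ast}$ as the kernel of $(\mathbb{C}^{\ast})^{n-1} \to \GL(\Def^{a}_{X}(\mathbb{C}[\epsilon]))$. There is no remaining ``obstacle'' of pinning down scalars $c_{k}$, because $c_{k}=1$ identically.

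Moreover, the mechanism you propose for that last step would fail as stated. Summing weights over a $\tau_{i}$-orbit via Lemma~\ref{i-orbit} (whose $\mathbb{Z}^{n}$-components cancel, leaving $|\tau_{i}|\epsilon_{j}$) yields only $c_{j}^{|\tau_{i}|}=1$, i.e.\ that $c_{j}$ is a root of unity --- automatic for any character of the finite group $G$, hence vacuous. And the relations $f_{1,p}+f_{2,p}+f_{3,p}=0$ and $\sum_{p} f_{k,p}=0$ are relations among the exponent vectors in $\mathbb{Z}^{3n}$, i.e.\ inside the lattice $L$ of the binomial ideal; they impose no condition whatsoever on a character of $G$ evaluated on the weight vectors $w_{p,q} \in \mathbb{Z}^{n}$, so they cannot force $c_{k}=1$ either. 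Note that $c_{k}=1$ really is needed: type-constancy alone shows only that $G^{\ast}$ preserves $\mathcal{M}$ setwise, scaling the three coordinates $(u_{1},u_{2},u_{3})$, and a generic $G$-invariant point would then fail to be fixed. The repair is immediate from your own computation: apply $\tau_{1}+\tau_{2}+\tau_{3}=0$ at a single edge, rather than through orbit sums and $L$-relations.
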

\begin{proof}The induced action of $G^{\ast}$ on $\Def_{X}^{a}$ is
trivial.  In fact the proof of Proposition~\ref{ext} shows that
$G^{\ast}$ is the kernel of ${(\mathbb{C}^{\ast})}^{n-1} \to
\GL(\Def_{X}^{a}(\mathbb{C}[\epsilon]))$.
\end{proof}

Consider the $\mathbb{Z}^{3} \subseteq M^{\prime}$ spanned by
$\epsilon_{1}$, $\epsilon_{2}$ and $\epsilon_{3}$ and set $\bar{M} =
\mathbb{Z}^{3} \cap M$. There is an exact sequence $0 \to \bar{M} \to
M \to K \to 0$. The intersection $\sigma^{\vee} \cap
\bar{M}_{\mathbb{R}}$ is the positive octant $\mathbb{R}^{3}_{\ge 0}$
since
$$
\epsilon_{k} = \frac{1}{n}\sum_{p \in \vertices T} A_{p, \tau_{k}(p)}
\in \sigma^{\vee} \, .$$ Let $\bar{\mathcal{M}} = \Spec \mathbb{C}[\bar{M}
\cap \mathbb{R}^{3}_{\ge 0}]$ be the corresponding $3$-dimensional
toric variety.  If $\bar{G} = \mathbb{Z}^{3}/\bar{M}$ then
$\bar{\mathcal{M}} = \mathbb{C}^{3}/\bar{G}^{\ast}$. We have already
seen $\bar{G}$ in Proposition \ref{ext} and know that as abstract
group it is isomorphic to $G \times \mathbb{Z}_{d}$.

I state the following lemma for lack of reference, the proof is
straightforward.
\begin{lemma} \label{tlemma} Let the cone $\sigma$ and the lattice $N$
determine the affine toric variety $U_{\sigma}$. Assume the
composition of lattice maps $N^{\prime} \hookrightarrow N
\twoheadrightarrow N^{\prime\prime}$ is injective, induces an
isomorphism $\sigma^{\prime} = \sigma \cap N^{\prime}_{\mathbb{R}}
\simeq \sigma^{\prime\prime} = \image \sigma \subset
N^{\prime\prime}_{\mathbb{R}}$ and $\rank N^{\prime} = \rank
N^{\prime\prime}$. If $K = \ker [N \twoheadrightarrow
N^{\prime\prime}]$, then $\ker[T_{N^{\prime}} \to
T_{N^{\prime\prime}}] = T_{N^{\prime}} \cap T_{K}$ and it is the
stabilizer of $U_{\sigma^{\prime}} \subset U_{\sigma}$ in $T_{K}$.
\end{lemma}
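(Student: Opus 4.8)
The plan is to reduce both assertions to the short exact sequence $0 \to K \to N \to N'' \to 0$ and its effect on tori, and then to compare the open torus orbits inside $U_{\sigma}$ and $U_{\sigma'}$. First I would apply the covariant torus functor $T_{(-)} = (-)\otimes_{\mathbb{Z}}\mathbb{C}^{\ast}$ to the defining sequence of $K$. Because $N''$ is a lattice, hence free, the sequence of lattices splits; tensoring the split sequence with the divisible group $\mathbb{C}^{\ast}$ stays exact and produces
\[
1 \to T_{K} \to T_{N} \to T_{N''} \to 1 \, .
\]
In particular $T_{K} = \Ker[\,T_{N} \to T_{N''}\,]$ as a subtorus of $T_{N}$. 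This fixes once and for all the ambient torus $T_{N}$ in which the intersection $T_{N'} \cap T_{K}$ is formed.

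Next I would treat the identity $\Ker[\,T_{N'} \to T_{N''}\,] = T_{N'} \cap T_{K}$. By functoriality the map $T_{N'} \to T_{N''}$ is the composite $T_{N'} \to T_{N} \to T_{N''}$ induced by $N' \hookrightarrow N \twoheadrightarrow N''$. Since the composition $N' \to N''$ is injective we have $N' \cap K = 0$, and with $\rank N' = \rank N''$ this gives $\rank N' + \rank K = \rank N$, so $N' \oplus K$ is a finite index sublattice of $N$; consequently $T_{N'}$ maps to a subtorus of $T_{N}$ and $T_{N'} \cap T_{K}$ is a \emph{finite} subgroup. Regarding $T_{N'}$ as this subtorus of $T_{N}$, an element of $T_{N'}$ lies in $\Ker[\,T_{N'} \to T_{N''}\,]$ if and only if its image in $T_{N}$ lies in $\Ker[\,T_{N} \to T_{N''}\,] = T_{K}$, which is exactly the condition that it lie in $T_{N'} \cap T_{K}$.

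Finally, for the stabilizer statement I would use the toric description of $U_{\sigma'} \subset U_{\sigma}$. The inclusion $N' \hookrightarrow N$ with $\sigma' = \sigma \cap N'_{\mathbb{R}}$ realizes $U_{\sigma'}$ as a closed subvariety whose dense torus is $T_{N'}$ and which meets the big torus in $U_{\sigma'} \cap T_{N} = T_{N'}$ (boundary points map to boundary points). Since $T_{N'}$ is the big torus of $U_{\sigma'}$, every element of $T_{N'}$ preserves $U_{\sigma'}$, so $T_{N'} \cap T_{K}$ is contained in the $T_{K}$-stabilizer. Conversely, if $t \in T_{K}$ satisfies $t \cdot U_{\sigma'} = U_{\sigma'}$, then intersecting with $T_{N}$, which $t \in T_{N}$ preserves, gives $t \cdot T_{N'} = T_{N'}$; as $T_{N'}$ is a subgroup, the coset $tT_{N'}$ equals $T_{N'}$ forces $t \in T_{N'}$, hence $t \in T_{N'} \cap T_{K}$. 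The isomorphism $\sigma' \simeq \sigma''$ together with $\rank N' = \rank N''$ ensures that $U_{\sigma} \to U_{\sigma''}$ is the quotient by $T_{K}$ and that $U_{\sigma'} \to U_{\sigma''}$ is finite surjective, so $U_{\sigma'}$ meets every $T_{K}$-orbit and the stabilizer is genuinely finite, consistent with the finiteness found above. The main obstacle is the toric bookkeeping of the previous paragraph: justifying that $U_{\sigma'} \hookrightarrow U_{\sigma}$ is a closed embedding with $U_{\sigma'} \cap T_{N} = T_{N'}$, and handling the case where $N'$ is not saturated in $N$, where $T_{N'} \to T_{N}$ has finite kernel and ``$T_{N'} \cap T_{K}$'' must be read via the image subtorus; once $N'$ is taken primitive (as in the application) this inclusion is a closed immersion and everything else is formal manipulation of the split exact sequence.
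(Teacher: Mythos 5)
The paper offers no proof to compare against: Lemma~\ref{tlemma} is stated ``for lack of reference'' with the remark that the proof is straightforward, so your argument must stand on its own --- and it does. Your route is correct and is surely the intended one: since $N''$ is free, $0 \to K \to N \to N'' \to 0$ splits, tensoring with $\mathbb{C}^{\ast}$ gives $T_{K} = \Ker[T_{N} \to T_{N''}]$, the kernel identity is then formal because $T_{N'} \to T_{N''}$ factors through $T_{N}$, and your converse for the stabilizer (intersect $t \cdot U_{\sigma'} = U_{\sigma'}$ with $T_{N}$, use $U_{\sigma'} \cap T_{N} = T_{N'}$, and conclude $t \in tT_{N'} = T_{N'}$) is exactly the right trick. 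Two remarks on the caveats you flag at the end. First, the closed-embedding worry can be sidestepped entirely: the lemma's ``$U_{\sigma'} \subset U_{\sigma}$'' is best read as the closure of $T_{N'}$ in $U_{\sigma}$, since the abstract toric morphism $U_{\sigma \cap N'_{\mathbb{R}}} \to U_{\sigma}$ can fail to be an immersion even when $N'$ is saturated --- the image of the semigroup $\sigma^{\vee} \cap M$ in $\Hom(N',\mathbb{Z})$ need not be saturated; e.g.\ for $N = \mathbb{Z}^{2}$, $N' = \mathbb{Z}e_{1}$ and $\sigma^{\vee} = \cone((2,1),(3,1))$ the closure of $T_{N'}$ is the cuspidal curve $\Spec \mathbb{C}[t^{2},t^{3}]$, of which $U_{\sigma'} = \mathbb{C}$ is only the normalization, and this example satisfies all the lemma's hypotheses. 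With the closure reading, $U_{\sigma'} \cap T_{N} = T_{N'}$ is immediate because a subtorus is Zariski closed in $T_{N}$ --- no orbit bookkeeping or ``boundary to boundary'' argument is needed --- and your stabilizer computation goes through verbatim. Second, you are right that $T_{N'} \subseteq T_{N}$ requires $N'$ saturated; in the paper's application this is automatic, as $N'$ there is dual to the surjection $p_{1}: M \to \mathbb{Z}^{3}$ and hence a direct summand of $N$, and moreover $\sigma^{\vee} \cap M$ surjects onto $\mathbb{Z}^{3}_{\ge 0}$, which is precisely what makes $\mathcal{M} = \mathbb{C}^{3} \hookrightarrow \widetilde{S}$ an honest closed embedding in that setting. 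Your closing assertions (that $U_{\sigma} \to U_{\sigma''}$ is the $T_{K}$-quotient and that $U_{\sigma'}$ meets every $T_{K}$-orbit) are consistent with how the lemma is used in Theorem~\ref{moduli} but are not needed for the lemma itself.
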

\begin{theorem} \label{moduli} The composition
$$
\mathbb{C}[\bar{M} \cap \mathbb{R}^{3}_{\ge 0}] \hookrightarrow \mathbb{C}[M
\cap \sigma^{\vee}] \twoheadrightarrow \mathbb{C}[\mathbb{Z}^{3}_{\ge
0}]$$ where the last map is induced by $p_{1}: M \to \mathbb{Z}^{3}$,
is injective and realizes $\bar{\mathcal{M}}$ as
$\mathcal{M}/\bar{G}^{\ast}$. This identity associates an isomorphism
class of $H_{T}$ invariant smooth 
abelian surfaces to each point of the torus $T_{\bar{M}} \subset
\bar{\mathcal{M}}$.
\end{theorem}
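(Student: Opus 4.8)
The plan is to reduce every assertion to the toric structure already built up, together with three facts: Theorem \ref{disc} (smooth fibers sit exactly over $T_{M}$), Lemma \ref{inv} ($G$-invariance $=$ $H_{T}$-invariance), and Lemma \ref{tkorb} (fibers are constant along $T_{K}$-orbits).

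First I would settle the algebraic claim. The crucial observation is that $p_{1}$ restricted to $\bar{M} = \mathbb{Z}^{3} \cap M$ is simply the inclusion $\bar{M} \hookrightarrow \mathbb{Z}^{3}$, since elements of $\bar{M}$ already lie in the $\epsilon$-summand. Hence a monomial $\chi^{v}$ with $v \in \bar{M} \cap \mathbb{R}^{3}_{\ge 0}$ is carried by the displayed composition to $\chi^{v} \in \mathbb{C}[\mathbb{Z}^{3}_{\ge 0}]$, so the composition is nothing but the inclusion of $\bar{G}^{\ast}$-invariants. Indeed $\chi^{v}$ with $v \in \mathbb{Z}^{3}_{\ge 0}$ is $\bar{G}^{\ast}$-invariant precisely when every $\psi \in \bar{G}^{\ast} = (\mathbb{Z}^{3}/\bar{M})^{\ast}$ is trivial on $v$, i.e. when $v \in \bar{M}$; thus $\mathbb{C}[\mathbb{Z}^{3}_{\ge 0}]^{\bar{G}^{\ast}} = \mathbb{C}[\bar{M} \cap \mathbb{R}^{3}_{\ge 0}]$, the map is injective, and it exhibits $\bar{\mathcal{M}} = \mathcal{M}/\bar{G}^{\ast}$.

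For the geometric claim I would first locate $T_{\bar{M}}$. Dualizing $0 \to \bar{M} \to \mathbb{Z}^{3} \to \bar{G} \to 0$ (using that $\mathbb{C}^{\ast}$ is injective) gives $0 \to \bar{G}^{\ast} \to T_{\mathbb{Z}^{3}} \to T_{\bar{M}} \to 0$, so the open torus of $\bar{\mathcal{M}}$ is $T_{\bar{M}} = T_{\mathbb{Z}^{3}}/\bar{G}^{\ast}$ and its points are exactly the $\bar{G}^{\ast}$-orbits in the big torus $T_{\mathbb{Z}^{3}} = T_{M} \cap \mathcal{M}$ of $\mathcal{M}$. Every point of $T_{\mathbb{Z}^{3}}$ lies in $T_{M}$, so by Theorem \ref{disc} its fiber is smooth, hence an abelian surface; it lies in $\mathcal{M} = (\Def^{a}_{X})^{G}$, so by Lemma \ref{inv} the surface is $H_{T}$-invariant. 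It remains to check that the isomorphism class is independent of the chosen lift, i.e. that the fibers are constant along $\bar{G}^{\ast}$-orbits.

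The heart of the matter is therefore the relation between $\bar{G}^{\ast}$ and $T_{K}$. Applying Lemma \ref{tlemma} to the two subtori $T_{\mathbb{Z}^{3}}$ and $T_{K}$ of $T_{M}$ — coming respectively from $p_{1}: M \twoheadrightarrow \mathbb{Z}^{3}$ and $M \twoheadrightarrow K$ — identifies $\ker[T_{\mathbb{Z}^{3}} \to T_{\bar{M}}]$ with $T_{\mathbb{Z}^{3}} \cap T_{K}$; concretely, a character $\psi$ of $\mathbb{Z}^{3}$ is trivial on $\bar{M}$ if and only if the corresponding $\phi = \psi \circ p_{1}$ kills $\bar{M} = \ker[M \to K]$, again because $p_{1}|_{\bar{M}}$ is the inclusion, so that $\phi|_{\bar{M}} = \psi|_{\bar{M}}$. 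Hence $\bar{G}^{\ast} = T_{\mathbb{Z}^{3}} \cap T_{K} \subseteq T_{K}$, and any $\bar{G}^{\ast}$-orbit is contained in a single $T_{K}$-orbit; Lemma \ref{tkorb} then forces all fibers over such an orbit to be isomorphic, which completes the argument. I expect this identification $\bar{G}^{\ast} \subseteq T_{K}$ to be the only genuine obstacle, the surrounding steps being bookkeeping with $0 \to \bar{M} \to M \to K \to 0$ and its duals; the subtlety is to match the abstractly defined quotient group $\bar{G}^{\ast}$ with the concrete automorphism torus $T_{K}$ along whose orbits the family is known to be equisingular.
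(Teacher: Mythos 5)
Your proposal is correct and follows essentially the same route as the paper: the injectivity and the identification $\bar{\mathcal{M}} = \mathcal{M}/\bar{G}^{\ast}$ via the inclusion $\bar{M} \hookrightarrow \mathbb{Z}^{3}$, the identification of $\bar{G}^{\ast}$ with $T_{\mathbb{Z}^{3}} \cap T_{K}$ (the stabilizer of $\mathcal{M}$ in $T_{K}$, which the paper gets by citing Lemma \ref{tlemma} and you additionally verify by the explicit character computation $\psi \mapsto \psi \circ p_{1}$), and then the combination of $\mathcal{M} = (\Def^{a}_{X})^{G}$, Lemma \ref{inv}, Lemma \ref{tkorb} and the second statement of Theorem \ref{disc}. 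Your write-up merely makes explicit some steps the paper leaves implicit, so there is nothing to correct.
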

\begin{proof} The injectivity follows from the injectivity of the
composition $\bar{M} \subseteq M \twoheadrightarrow \mathbb{Z}^{3}$.
Dualizing this composition we arrive in the situation of Lemma
\ref{tlemma} with $N^{\prime}$ dual to $\mathbb{Z}^{3}$ and
$N^{\prime\prime}$ dual to $\bar{M}$. From toric geometry it follows
that $\bar{G}^{\ast} \simeq \ker[T_{N^{\prime}} \to
T_{N^{\prime\prime}}]$. Thus by Lemma \ref{tlemma}, $\bar{G}^{\ast}$
is isomorphic to the stabilizer subgroup of $\mathcal{M}$ in $T_{K}$.
The result now follows from the identification $\mathcal{M} =
{(\Def_{X}^{a})}^{G}$, Lemma \ref{tkorb},  Lemma \ref{inv} and the
second statement in Theorem \ref{disc}.
\end{proof}

\begin{remark} Note that $\bar{\mathcal{M}}$ is the normalization of
$\Spec \mathbb{C}[\mathbb{S} \cap \mathbb{Z}^{3}]$. Theorem
\ref{moduli} seems to imply that we see the moduli space for abelian
surfaces with level-structure of type $(d, n/d)$ as an open subset of
one of these spaces. We may at least think of them as representing the
germ at a ``deepest" boundary point. This type of claim presupposes an
analysis of degenerate abelian surfaces arising from the
non-polyhedral equivelar maps on the torus, which is at the moment
work in progress. \end{remark}

With the standard description $\bar{G}$ is the cokernel of the matrix
(\ref{z3}) in the proof of Proposition \ref{ext}.  In each particular
case it is straightforward to describe the action of $\bar{G}$ and
thus the singularities of $\bar{\mathcal{M}}$. Here are two examples.

\begin{example} \label{cycex} Consider the case where $G$ is cyclic
and one of the $\tau_{i}$ generate $G$. We may assume that this is
$\tau_{1}$, so the standard presentation is given by
 $$
\begin{pmatrix} n & b\\ 0 & 1
\end{pmatrix}$$ where $2 \le b \le n-2$. (Not all of these are
polyhedral.) One computes that the dual lattice, $$ \bar{N}
=\mathbb{Z}^{3}+ \mathbb{Z} \cdot \frac{1}{n}\bigl(b(b+1), -b,
b+1\bigr)$$ so the action of $\bar{G}$ is generated by
$\diag(\zeta_{n}^{b(b+1)}, \zeta_{n}^{-b}, \zeta_{n}^{b+1})$ where
$\zeta_{n}$ is a primitive $n$'th root of unity. This will yield an
\emph{isolated} quotient singularity if and only if $n$ is coprime to
both $b$ and $b+1$. This is true if and only if all three $\tau_{k}$
generate $G$.

The quotient singularity will be \emph{Gorenstein} if and only if $1 +
b + b^{2} \equiv 0 \mod n$, which again is equivalent to $T$ being
chiral.  Indeed, if $$ \rho =
\begin{pmatrix} 0 & -1\\ 1 & 1
\end{pmatrix}$$ generates the $6$-fold rotational symmetry in the
standard description, then $T$ is chiral if and only if $\Gamma$, as
translation subgroup of $\Aut(\{3,6\})$, is fixed by conjugacy with
$\rho$. This is again if and only if $\rho (\Gamma) \subset \Gamma$,
when we now view $\Gamma$ as a sublattice of $\mathbb{Z}^{2}$. The
latter is equivalent to
 $$
\begin{pmatrix} n & b\\ 0 & 1
\end{pmatrix}^{-1}
\begin{pmatrix} 0 & -1\\ 1 & 1
\end{pmatrix}
\begin{pmatrix} n & b\\ 0 & 1
\end{pmatrix}$$ being an integral matrix. The condition for this is
exactly $1 + b + b^{2} \equiv 0 \mod n$.
\end{example}

\begin{example} Consider next the case where $G$ is presented by
 $$
\begin{pmatrix} a & 0\\ 0 & c
\end{pmatrix} \, .$$ This is polyhedral if $a,c \ge 3$. In this case
$d = \gcd(a,c)$ and one easily computes that $\bar{M}$ is the image of
 $$
\begin{pmatrix} c & 0 & 0\\ 0 & d & 0\\ 0 & 0 & a
\end{pmatrix} \, .$$ Thus $\bar{\mathcal{M}} = \Spec \mathbb{C}[x^{c},
y^{d}, z^{a}] \simeq \mathbb{C}^{3}$.
\end{example}

\section{The vertex minimal triangulation $T_{7}$} \label{T7} From the
Euler formula $v-e+f=0$ and the fact that $3f=2e$ for surface
manifolds, one concludes that a triangulated torus must have at least
7 vertices. There is exactly one such triangulation and it is
equivelar. It is sometimes called the M\"{o}bius torus, since he gave
the first description in 1861. In 1949 Cs\'{a}z\'{a}r gave the first
polyhedral realization of the triangulation in $3$-space. See
e.g. \cite{be:tou} and the references therein.

\subsection{Invariants and Stanley-Reisner scheme} Call this
triangulation $T_{7}$ - it is drawn in Figure 
\ref{T7_fig}. The group $G$ is $\mathbb{Z}_{7}$ and the standard
presentation is given by
 $$
\begin{pmatrix} 7 & 2\\ 0 & 1
\end{pmatrix}$$ so the pair of divisors is $(1,7)$.  It is chiral and
the automorphism group is the Frobenius group $F_{42} = \mathbb{Z}_{7}
\ltimes \mathbb{Z}_{6}$. The relations among the $\tau_{k}$ are
$\tau_{2} = 4\tau_{1}$ and $\tau_{3} = 2\tau_{1}$.

\begin{figure} \centering
\includegraphics[scale=0.4]{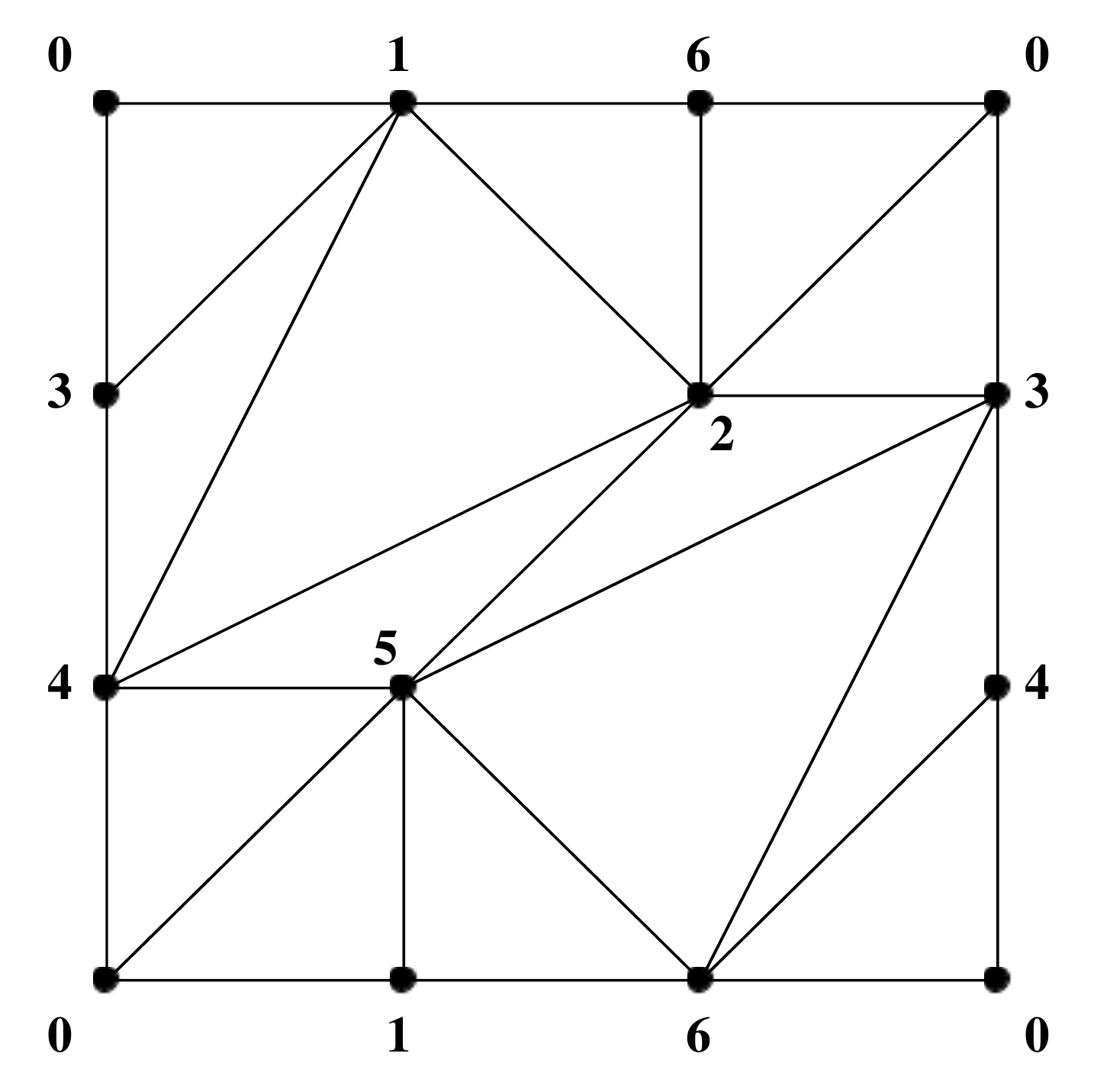}
\caption{The fundmental domain in Figure \ref{lattice} straightens out
  to this standard drawing of the M\"{o}bius torus. The vertices are
  labeled so that $\{m\} = \{(m\tau_{1})(0)\}$.}
\label{T7_fig}
\end{figure}

It is the only polyhedral triangulation with complete edge graph,
i.e. the edge graph is the complete graph $K_{7}$. The Stanley-Reisner
ideal is generated by 21 cubic monomials (see
Section~\ref{SR-scheme}). The Stanley-Reisner scheme consists of $14$
planes.

It appears in the lists of degenerations in both \cite{mr:deg} and
\cite{ma:fam}, in fact Marini calls it the most special
degeneration. It also appears in a central but hidden way in
\cite{ms:mod}. 

I am grateful to the referee for suggesting the
following. In \cite{ms:mod} Manolache and Schreyer prove that the moduli space of
polarized abelian surfaces of type $(1,7)$ is birational to a Fano
threefold commonly known as $V_{22}$. They give an explicit rational
parametrization via a triple projection from a special point $p_e$ in
$V_{22}$.  In their description $p_e$ corresponds to a special cubic
curve in $\mathbb{P}^3$, namely three lines through a point, but by \cite [Proposition 5.16]{ma:fam} this
corresponds again exactly to our Stanley-Reisner scheme. Thus the chart in  \cite{ms:mod}
and our $\bar{\mathcal{M}}$, described later in Section \ref{t7mod}, are in a sense
complimentary.

\subsection{The smoothing component, a reflexive cone and a Calabi-Yau
threefold.} Since $d= 1$, $\Def_{X}^{a}$ has one main component, the unique
smoothing component $S$.

\begin{theorem} The cone $\sigma^{\vee}$ for $T_{7}$ is a $9$
dimensional completely split reflexive Gorenstein cone of index
$3$. It is the Cayley cone over three $6$-dimensional lattice
simplices.
\end{theorem}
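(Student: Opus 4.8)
The claims about dimension, the Gorenstein property, and the completely split Cayley structure over three $6$-simplices are immediate from the general analysis of Section~\ref{analysis}, so I would dispose of these first. For $T_{7}$ one has $\tau_{2} = 4\tau_{1}$ and $\tau_{3} = 2\tau_{1}$ with $2,4$ coprime to $7$, so every principal translation has order $\lvert \tau_{k}\rvert = n = 7$. Hence the length of the finer Cayley structure is $r = \sum_{k} n/\lvert\tau_{k}\rvert = 3$, and by the Cayley-structure theorem above $\sigma^{\vee}$ is the Cayley cone over $\Delta_{1},\Delta_{2},\Delta_{3}$, each $\Delta_{k}$ being the image in the rank $n+2-r = 6$ lattice $M^{\prime\prime}$ of the single $\tau_{k}$-orbit of type $k$ columns. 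Each such orbit consists of all $n=7$ columns, so $\Delta_{k}$ has $7$ vertices, and since $\lvert\tau_{k}\rvert = n$ it is $(n-1)=6$-dimensional, hence a $6$-simplex (preceding remark). As $\rank M = n+2 = 9$, the cone is $9$-dimensional, and it is Gorenstein with $n_{\sigma^{\vee}} = \epsilon_{1}^{\ast}+\epsilon_{2}^{\ast}+\epsilon_{3}^{\ast}$ by the Gorenstein proposition above.

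The one genuinely new assertion is that $\sigma^{\vee}$ is \emph{reflexive}; granting this, it is completely split of index equal to the number of Cayley factors, namely $3$, by the definition recalled in the preliminaries and the index formula of \cite{bn:com}. To prove reflexivity I would show the dual cone $\sigma = (\sigma^{\vee})^{\vee} \subseteq N_{\mathbb{R}}$ is itself Gorenstein, i.e. exhibit $m_{\sigma} \in M$ with $\langle u, m_{\sigma}\rangle = 1$ for every primitive ray generator $u$ of $\sigma$ -- equivalently for every primitive inner normal of a facet of $\sigma^{\vee}$ -- and confirm $\langle m_{\sigma}, n_{\sigma^{\vee}}\rangle = 3$. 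Concretely I would fix $M = \image A \subseteq M^{\prime} = \mathbb{Z}^{3}\oplus\{a\in\mathbb{Z}^{7}:\sum a_{i}=0\}$ (of index $\lvert M^{\prime}/M\rvert = n^{2}d = 49$ by Proposition~\ref{ext}), present $\sigma^{\vee}$ as the positive hull of the $21$ primitive columns $A_{p,\tau_{k}(p)}$ (Lemma~\ref{rays}), and compute its facets.

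The facet computation is where the symmetry of $T_{7}$ pays off. The automorphism group $F_{42} = \mathbb{Z}_{7}\rtimes\mathbb{Z}_{6}$ acts by lattice automorphisms preserving $\sigma^{\vee}$ and $n_{\sigma^{\vee}}$: the subgroup $G=\mathbb{Z}_{7}$ permutes the columns according to its simply transitive action on $\vertices T$, while $\rho^{2}$ from the $\mathbb{Z}_{6}$ factor cyclically permutes the three edge-types and hence the three Cayley factors. The facets therefore fall into only a few $F_{42}$-orbits, and it suffices to verify the height-$1$ condition on one primitive normal per orbit, producing a single $m_{\sigma}\in M$ and checking the common value $\langle m_{\sigma}, n_{\sigma^{\vee}}\rangle = 3$.

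The main obstacle is the facet enumeration itself: reflexivity is a condition on \emph{all} facets of a $9$-dimensional cone with $21$ rays, so one must actually determine which subsets of columns span facets and extract their primitive normals in $N$ before the height condition can be tested. Symmetry reduces the bookkeeping but not the need for the facial structure, and the candidate $m_{\sigma}$ (together with the value $3$) is most naturally read off from the dual nef-partition predicted by the Cayley duality of \cite{bn:com} and then verified. An equivalent and perhaps more economical route is the support-polytope reformulation: show that the height-$1$ slice $\tilde\Delta = \operatorname{conv}\{A_{p,\tau_{k}(p)}\}$ is a Gorenstein polytope of index $3$, i.e. that $3\tilde\Delta$ carries a unique interior lattice point whose polar is again a lattice polytope. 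This is the same verification repackaged and would likewise be organized along the $F_{42}$-orbits.
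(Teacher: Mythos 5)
Your proposal is correct and follows essentially the same route as the paper: everything except reflexivity is obtained from the general results of Section~\ref{conesec} (rank $n+2=9$, Gorenstein via $n_{\sigma^{\vee}}$, Cayley length $r=\sum_k n/\lvert\tau_k\rvert=3$ with $6$-simplex factors since each $\lvert\tau_k\rvert=7$), and reflexivity is settled by explicitly enumerating the rays of the dual cone $\sigma$ and checking the height-one pairing against a candidate $m_{\sigma}$ with $\langle m_{\sigma}, n_{\sigma^{\vee}}\rangle = 3$. The paper performs exactly this verification by computer (the {\tt Convex} package finds that $\sigma$ has $24$ rays) with $m_{\sigma}=\epsilon_{1}+\epsilon_{2}+\epsilon_{3}$, whose membership in $M$ it deduces in advance from the chirality of $T_{7}$ (Example~\ref{cycex}) --- the one ingredient you leave to be discovered during the computation --- so your $F_{42}$-organized hand check is the same argument differently packaged.
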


\begin{proof} The result follows from implementing our general results
from Section~\ref{conesec}. I have done the computations in Maple
using the package {\tt Convex} (\cite{fr:co}). Choose an origin in
$\vertices T$.  Since $T_{7}$ is chiral, $\bar{M}$ is Gorenstein, so
$\epsilon_{1} + \epsilon_{2} + \epsilon_{3} \in M$. In fact $$
\epsilon_{1} + \epsilon_{2} + \epsilon_{3} = A_{p,\tau_{1}(p)}
+A_{\tau_{3}(p),(\tau_{2}+\tau_{3})(p)} +
A_{-\tau_{2}(p),(\tau_{3}-\tau_{2})(p)} \, .$$ As a basis for $M$
choose
\begin{gather*}E_{1} = A_{0,\tau_{1}(0)}, E_{2} =
A_{\tau_{3}(0),(\tau_{2}+\tau_{3})(0)}, E_{3}=
A_{-\tau_{2}(0),(\tau_{3}-\tau_{2})(0)}\\ E_{3+i} =
m_{(i-1)\tau_{1}(0)}
\end{gather*} for $i = 1, \dots , 6$, where the $m_p$ are as in
\eqref{them}. Thus in this basis $E_{i}^{\ast}=  \epsilon_{i}^{\ast}$ for $i=1,2,3$.  

After expressing the columns of $A$ in this basis, i.e. after turning
$M$ into the standard $\mathbb{Z}^{9}$, one computes that
$\sigma^{\vee}$ is the Cayley cone over the 3 simplices in
$\mathbb{R}^{6}$ described in Table \ref{simpl}. Now plug this into
the computer program and find that $\sigma$, the dual cone, has 24
rays, three of them are of course spanned by $\epsilon_{1}^{\ast},
\epsilon_{2}^{\ast}, \epsilon_{3}^{\ast}$.  Moreover
$m_{\sigma}:=\epsilon_{1} + \epsilon_{2} + \epsilon_{3} \in M$ yields
the Gorenstein property on $\sigma$.  This proves the theorem.
\end{proof}

\begin{table} \centering $$ \Delta_{1}: \left[
\begin {smallmatrix} 0&0&0&0&0&0&-1\\ 0&1&1&1&1&1&0\\ 0&0&1&1&1&1&0\\
0&0&0&1& 1&1&0\\ 0&0&0&0&1&1&0\\ 0&0&0&0&0&1&0
\end {smallmatrix} \right] \quad \Delta_{2}: \left[
\begin {smallmatrix} 0&0&0&-1&0&0&-1\\ -1&0 &0&-1&-1&0&-1\\
-1&-1&0&-1&-1&-1&-1 \\ 0&0&0&0&0&0&-1\\ -1&0&0&-1&0&0&-1 \\
-1&-1&0&-1&-1&0&-1
\end {smallmatrix} \right] \quad \Delta_{3}: \left[
\begin {smallmatrix} 1&0&1&0&1&0&0\\ 0&0&0 &0&0&0&-1\\ 0&0&1&0&1&0&0\\
0&-1&0&0 &0&0&-1\\ 0&0&0&0&1&0&0\\ 1&0&1&0&1& 1&0
\end {smallmatrix} \right] $$
\caption{The simplices are the convex hull of the columns.}
\label{simpl}
\end{table}

\begin{corollary} The smoothing component $S$ is the germ of a normal
Gorenstein affine toric variety.
\end{corollary}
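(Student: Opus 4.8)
The plan is to deduce both properties from the explicit description of $\sigma^{\vee}$ obtained in the theorem just proved, splitting the claim into two parts: normality (i.e. $S = \widetilde{S}$) and the Gorenstein property of $\widetilde{S}$. Since $d = 1$ for $T_{7}$, there is a single main component, so $S$ is the unique smoothing component and $\widetilde{S} = (\Spec \mathbb{C}[M \cap \sigma^{\vee}], 0)$ is its normalization. Recall that $S = (\Spec \mathbb{C}[\mathbb{S}], 0)$, where $\mathbb{S}$ is the subsemigroup generated by the $21$ columns of $A$, and that by Lemma \ref{rays} these columns are precisely the primitive generators of the rays of $\sigma^{\vee}$. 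Hence proving normality amounts to showing that this generating set is saturated, i.e. $\mathbb{S} = M \cap \sigma^{\vee}$, equivalently that the columns of $A$ form a Hilbert basis of $\sigma^{\vee} \cap M$.

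For the Gorenstein property I would argue as follows. Once $S = \widetilde{S}$ is known, $S$ is the affine toric variety $U_{\sigma}$ attached to the dual cone $\sigma$, and $U_{\sigma}$ is Gorenstein exactly when there is a lattice point $m_{\sigma} \in M$ with $\langle m_{\sigma}, v \rangle = 1$ for every primitive ray generator $v$ of $\sigma$. This is precisely the computation already carried out in the proof of the theorem, where $m_{\sigma} = \epsilon_{1} + \epsilon_{2} + \epsilon_{3} \in M$ pairs to $1$ with each of the $24$ ray generators of $\sigma$. Thus the Gorenstein property is already in hand and costs nothing extra; the whole point reduces to saturation.

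The real content is therefore the normality statement, and this is where I expect the main difficulty. The height function $n_{\sigma^{\vee}} = \epsilon_{1}^{\ast} + \epsilon_{2}^{\ast} + \epsilon_{3}^{\ast}$ assigns the value $1$ to every column of $A$, and its level-$1$ lattice points are exactly the lattice points $\{\epsilon_{k}\} \times (\Delta_{k} \cap M^{\prime\prime})$ of the three Cayley simplices, since an integral height-$1$ point must have first-factor coordinates a non-negative integer combination summing to $1$. I would first check, directly from Table \ref{simpl}, that each $\Delta_{k}$ contains no lattice points beyond its seven vertices, so that the level-$1$ lattice points coincide with the ray generators. The remaining and harder step is to show that every lattice point of $\sigma^{\vee}$ at height $h \ge 2$ is a sum of $h$ level-$1$ points (the integer-decomposition property for this cone); a reflexive Gorenstein cone need not have this property, so it cannot be invoked for free.

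I expect to settle this last point computationally in \texttt{Convex}, by computing the Hilbert basis of $\sigma^{\vee} \cap M$ and confirming that it consists of exactly the $21$ elements given by the columns of $A$. This uses the same explicit lattice-and-cone model already set up to prove the theorem, so no new data are required. Granting saturation, $S = \widetilde{S}$ is normal, and by the second paragraph it is Gorenstein, which is the assertion of the corollary.
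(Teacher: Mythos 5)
Your proposal is correct, and your treatment of the Gorenstein property coincides with the paper's: both rest on the reflexivity established in the theorem just proved, i.e.\ on $m_{\sigma}=\epsilon_{1}+\epsilon_{2}+\epsilon_{3}\in M$ pairing to $1$ with all $24$ primitive ray generators of $\sigma$, so that the canonical ideal of the normalization is principal. Where you genuinely diverge is the normality step. You reduce it to the saturation $\mathbb{S}=M\cap\sigma^{\vee}$ and propose to verify this by a Hilbert-basis computation in \texttt{Convex}, after checking from Table \ref{simpl} that the height-one lattice points are exactly the $21$ columns of $A$ (cf.\ Lemma \ref{rays}). The paper instead extracts normality from reflexivity plus one explicit membership: since $\sigma$ is Gorenstein, $\interior\sigma^{\vee}\cap M=m_{\sigma}+(\sigma^{\vee}\cap M)$, and $m_{\sigma}$ lies in the semigroup $\mathbb{S}$ generated by the columns of $A$ --- a fact already exhibited in the proof of the theorem, where $\epsilon_{1}+\epsilon_{2}+\epsilon_{3}$ is written as the sum of the three columns $A_{p,\tau_{1}(p)}$, $A_{\tau_{3}(p),(\tau_{2}+\tau_{3})(p)}$, $A_{-\tau_{2}(p),(\tau_{3}-\tau_{2})(p)}$ --- and from this it concludes that the semigroup ring is normal. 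So the paper's proof costs essentially nothing beyond the theorem, whereas yours requires one further (routine, finite) machine check. What your route buys is self-justification: as you rightly observe, reflexivity alone does not formally imply the integer decomposition property, and the passage from $m_{\sigma}\in\mathbb{S}$ to saturation is precisely the kind of step that needs example-specific input (at index $1$ such a membership would be automatic for any semigroup generated by all height-one lattice points, so it cannot suffice as a general principle); your Hilbert-basis verification establishes the needed fact directly, with the same software the paper already employs, and can be regarded as making explicit what the paper's terse one-line conclusion leaves implicit.
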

\begin{proof} Since $\sigma$ is Gorenstein, the point
$m_{\sigma}:=\epsilon_{1} + \epsilon_{2} + \epsilon_{3} \in M$
generates the interior of $\sigma^{\vee}$, in the sense that
$\interior \sigma \cap M = m_{\sigma} + \sigma \cap M$. But in this
example $m_{\sigma}$ is in the semigroup generated by the columns of
$A$, so the semigroup ring is normal.
\end{proof}

The notion of reflexive cones was introduced by Batyrev and Borisov to
study mirror symmetry for complete intersection Calabi Yau manifolds
in toric varieties. In our example, since $\sigma^{\vee}$ is
reflexive, the Minkowski sum $\Delta = \Delta_{1} + \Delta_{2}
+\Delta_{3}$ will be a \emph{reflexive} 6 dimensional polytope. This
polytope determines a 6 dimensional (singular) Fano variety $Y$. The
Minkowski decomposition gives us 3 divisors $E_{i}$ on $Y$. If we cut
$Y$ with a general section of each of the $\mathcal{O}_{Y}(E_{i})$ the
result is a singular Calabi-Yau 3-fold and one can now take a crepant
resolution to arrive at a Calabi-Yau 3-manifold.

Maxmillian Kreuzer ran the polytope through the computer program PALP
(see \cite{ks:pa}) with the following result.
\begin{proposition} The Calabi Yau 3-manifold arising from the
reflexive cone for $T_{7}$ has Hodge numbers $h^{1,1} = 15$ and
$h^{1,2} = 12$. In particular the Euler number is 6.
\end{proposition}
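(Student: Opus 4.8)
The plan is to compute the (stringy) Hodge numbers combinatorially by means of the Batyrev--Borisov formulas for Calabi--Yau complete intersections attached to nef partitions, and then to use that these stringy numbers coincide with the ordinary Hodge numbers of a smooth crepant resolution. The completely split reflexive Gorenstein cone $\sigma^{\vee}$ of index $3$ produced in the preceding theorem is precisely the datum of a nef partition: the Cayley structure over $\Delta_{1},\Delta_{2},\Delta_{3}$ records the decomposition of the reflexive polytope $\Delta = \Delta_{1}+\Delta_{2}+\Delta_{3}$ into three nef parts, while the dual cone $\sigma$ records the dual nef partition. Since $\dim Y = 6$ and we impose $r=3$ equations, the resulting Calabi--Yau is a threefold, so only $h^{1,1}$ and $h^{1,2}=h^{2,1}$ are in question, and the Euler number is $\chi = 2\bigl(h^{1,1}-h^{1,2}\bigr)$; thus proving $h^{1,1}=15$ and $h^{1,2}=12$ is equivalent to the claim, and gives $\chi = 2(15-12)=6$.

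First I would make the two dual nef partitions explicit from the data already in hand: the simplices of Table~\ref{simpl} determine $\Delta$ together with its Minkowski summands, and the $24$ rays of $\sigma$ found in the proof of the preceding theorem determine the dual reflexive polytope $\nabla$ with its dual decomposition $\nabla=\nabla_{1}+\nabla_{2}+\nabla_{3}$. Next I would substitute this combinatorial input into the Batyrev--Borisov stringy $E$-function $E_{st}(u,v)=\sum_{p,q}(-1)^{p+q}h^{p,q}_{st}\,u^{p}v^{q}$ (see \cite{bb:dua} and the survey \cite{bn:com}), which is a finite alternating sum indexed by pairs of mutually dual faces of $\Delta$ and $\nabla$, each summand being a product of the $S$-polynomials (combinatorial $h^{\ast}$-type polynomials) of those faces. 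Reading off the coefficients yields $h^{1,1}_{st}=15$ and $h^{2,1}_{st}=12$. Because we are in the threefold case, a maximal projective crepant partial subdivision of the ambient fan gives a crepant resolution over which the generic complete intersection is a smooth Calabi--Yau $3$-manifold, and by the birational invariance of the stringy $E$-function these stringy numbers equal the ordinary $h^{1,1}$ and $h^{1,2}$ of that manifold.

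The step I expect to be the real obstacle is the evaluation of the Batyrev--Borisov sum itself. It ranges over all faces of two $6$-dimensional reflexive polytopes, which have a large number of faces, and each contribution requires an $S$-polynomial of a face of dimension up to $6$; carrying this out by hand is not realistic. The honest content of the argument is therefore twofold: the correct translation of the cone and nef-partition data of $\sigma^{\vee}$ and $\sigma$ into the input format of a machine implementation, and the verification that the variety so described is indeed the crepant resolution of our complete intersection, so that the machine output is the genuine Hodge numbers. The computation is then delegated to {\tt PALP} (\cite{ks:pa}), which implements exactly these nef-partition formulas; this is the route taken here.
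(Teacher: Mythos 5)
Your proposal is correct and follows essentially the same route as the paper: the paper's proof is precisely a {\tt PALP} computation (run by Kreuzer) of the Hodge numbers from the nef-partition data encoded in the completely split reflexive Gorenstein cone. You make explicit the theoretical scaffolding the paper leaves implicit --- the Batyrev--Borisov stringy $E$-function and the fact that in dimension $3$ an MPCP subdivision yields an honest crepant resolution, so stringy and ordinary Hodge numbers agree --- but the computational core is identical.
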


More may be said about $\Delta$ but I have not been able to find a
good description of it. The $f$-vector is $(112,427,630,441,147,21,1)$
and the facets are probably all isomorphic. They alle have at least the same
$f$-vector $(38,111,125, 64, 14, 1)$. From the PALP computation one
learns that it has $204 = 29 \times 7 +1$ lattice points. It might be
easier to describe the dual reflexive polytope since it has $21$
vertices and $22$ (!) lattice points.

Note that if $Z$ is the total space of the vector bundle
$\bigoplus_{i=1}^{3}\mathcal{O}_{Y}(E_{i})$, toric geometry tells us
that there is a birational toric morphism $f: (Z,Y) \to (S,0)$, where
we identify $Y$ as the zero section.

\begin{remark} Since the cones associated to all equivelar
triangulated tori are Cayley cones, one could ask if more of them are
reflexive. I have not been able to prove, but do conjecture that
$T_{7}$ is the \emph{only} polyhedral triangulation leading to a
reflexive cone. There are other non-polyhedral examples.
\end{remark}

\subsection{The non-smoothing components and tilings of the torus} The
versal base space in this case is defined by $21$ binomials in 
$21$ variables. The numbers are small enough for us to be able to give
the full component structure. This was done by delicate use of the
ideal quotient command in Macaulay 2 (\cite{m2}).
\begin{proposition} The versal base space
$\Def_{\mathbb{P}(T_{7})}^{a}$ is reduced. It is the union of $29$
irreducible components. The $28$ non-smoothing components are
isomorphic to the germ of the product of the affine cone over
$\mathbb{P}^{1} \times \mathbb{P}^{2} \subset \mathbb{P}^{5}$ and
$\mathbb{C}^{3}$ or $\mathbb{C}^{4}$.
\end{proposition}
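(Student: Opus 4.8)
The plan is to compute the full primary decomposition of the ideal $I$ generated by the $2\times 2$ minors of the seven $2\times 3$ matrices (\ref{minors}) and read off the components together with their geometry. By Theorem \ref{es}, in characteristic $0$ every associated prime of the binomial ideal $I$ has the cellular form $I_{\Sat L_{\mathcal Z},\rho}+\mathfrak p_{\mathcal Z}$ for a subset $\mathcal Z$ of the $21$ edge-variables and a character $\rho$. The full cell $\mathcal Z=\{1,\dots,21\}$ with trivial $\rho$ is the toric lattice ideal of $\Sat L$, i.e. the smoothing component $S$; and since $d=1$ for $T_7$ (here $G\simeq\mathbb Z_7$ is cyclic, cf. Proposition \ref{abg}), this is the only minimal prime of full support. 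So reducedness and the component count reduce to classifying the proper cells $\mathcal Z\subsetneq\{1,\dots,21\}$ that support a minimal prime, and checking which twists $\rho$ actually occur.

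First I would make the combinatorics concrete. Index the vertices by $\mathbb Z_7$ with $m=(m\tau_1)(0)$, so that (using $\tau_3=2\tau_1$, $\tau_2=4\tau_1$) the edges of types $1,3,2$ are $\{m,m{+}1\}$, $\{m,m{+}2\}$, $\{m,m{+}4\}$. Each of the $21$ edge-variables occurs in exactly two of the seven matrices, once in a top row and once in a bottom row, and the three minors of a single vertex $p$ are precisely the $2\times2$ minors of a generic $2\times3$ matrix (all six entries distinct, since the graph is $K_7$). The engine of the classification is that a minor $t_{p,\tau_i}t_{p,-\tau_j}-t_{p,\tau_j}t_{p,-\tau_i}$ cannot have exactly one of its monomials vanish on a prime component, so setting any one variable to $0$ forces a cascade of further vanishings. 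I would trace this propagation on the edge graph, using the automorphism group $F_{42}=\mathbb Z_7\rtimes\mathbb Z_6$ to reduce to a handful of orbit representatives.

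The structural claim to establish is that every admissible vanishing pattern collapses all but one of the seven vertex matrices to monomial relations already forced by $\mathfrak p_{\mathcal Z}$, leaving a single surviving $2\times3$ block (the natural candidate being one vertex's matrix) with all six entries nonzero. Its three minors are then the defining equations of the Segre variety $\mathbb P^1\times\mathbb P^2\subset\mathbb P^5$, whose affine cone is the $4$-dimensional determinantal variety; the remaining nonzero variables enter no surviving relation and contribute a free factor $\mathbb C^3$ or $\mathbb C^4$ according to how many of them survive. This identifies each non-main component as the germ of $(\text{cone over }\mathbb P^1\times\mathbb P^2)\times\mathbb C^{3}$ or $\times\mathbb C^{4}$, and the symmetry bookkeeping should produce exactly $28$ of them in the two stated isomorphism types.

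Finally I would verify reducedness, i.e. that $I$ equals the intersection of the $29$ minimal primes, with no embedded primes and no nontrivial twist $\rho$ contributing. The cleanest route is to confirm $I=\bigcap_i P_i$ directly, for instance by matching Hilbert functions or by peeling off one component at a time with iterated ideal quotients, which is precisely the Macaulay 2 computation referred to in the statement. The hard part is exactly this last, finite but delicate, bookkeeping: showing the list of $29$ cells is complete, that none is embedded in another, and that the cascade analysis has been carried out uniformly over all $F_{42}$-orbits. It is this demand for exhaustiveness, rather than any single conceptual step, that makes a purely by-hand proof awkward and motivates the machine verification.
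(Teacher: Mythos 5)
Your proposal is correct and follows essentially the same route as the paper: the paper's own proof is precisely the Macaulay~2 ideal-quotient computation you arrive at, framed (as you frame it) by the Eisenbud--Sturmfels cellular structure of Theorem \ref{es}, which guarantees each non-main associated prime is $I_{\Sat L_{\mathcal Z},\rho}+\mathfrak p_{\mathcal Z}$ and makes the $2\times 2$-minor/Segre-cone identification and the $F_{42}$-orbit bookkeeping the natural reading of the output. Your candid admission that the exhaustiveness of the cell classification and the reducedness check are delegated to the machine matches exactly what the paper does.
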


When one has the ideal of the component it is in this case easy to
find the generic fiber. This will be an interesting scheme since it is
a ``generic" non-smoothable degenerate abelian surface, i.e. it cannot
appear in degenerations of smooth abelian surfaces.

\begin{figure}
  \centering
  \subfloat[$P_1$]{\includegraphics[width=0.5\textwidth]{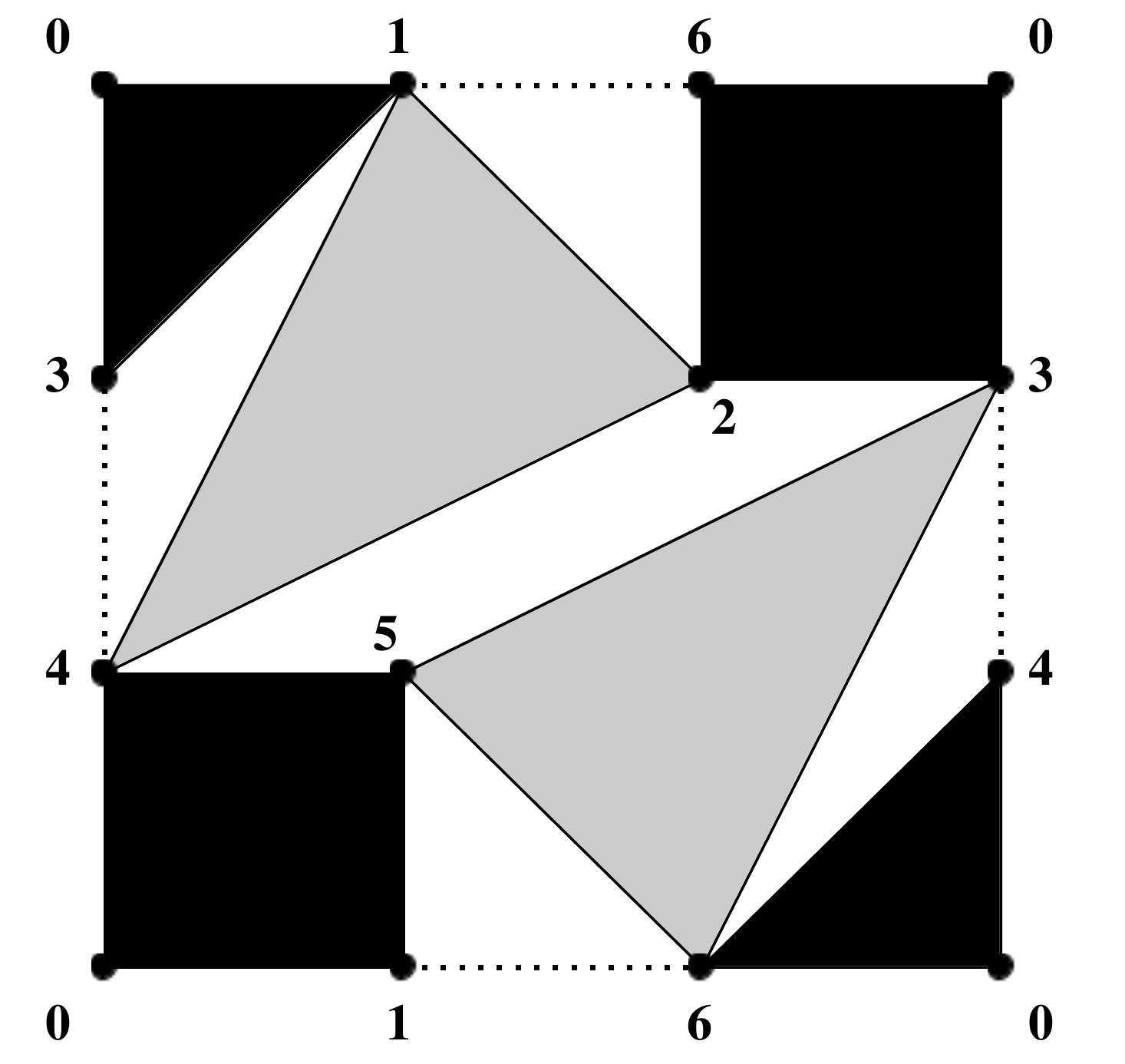}}                
  \subfloat[$P_2$]{\includegraphics[scale=0.523]{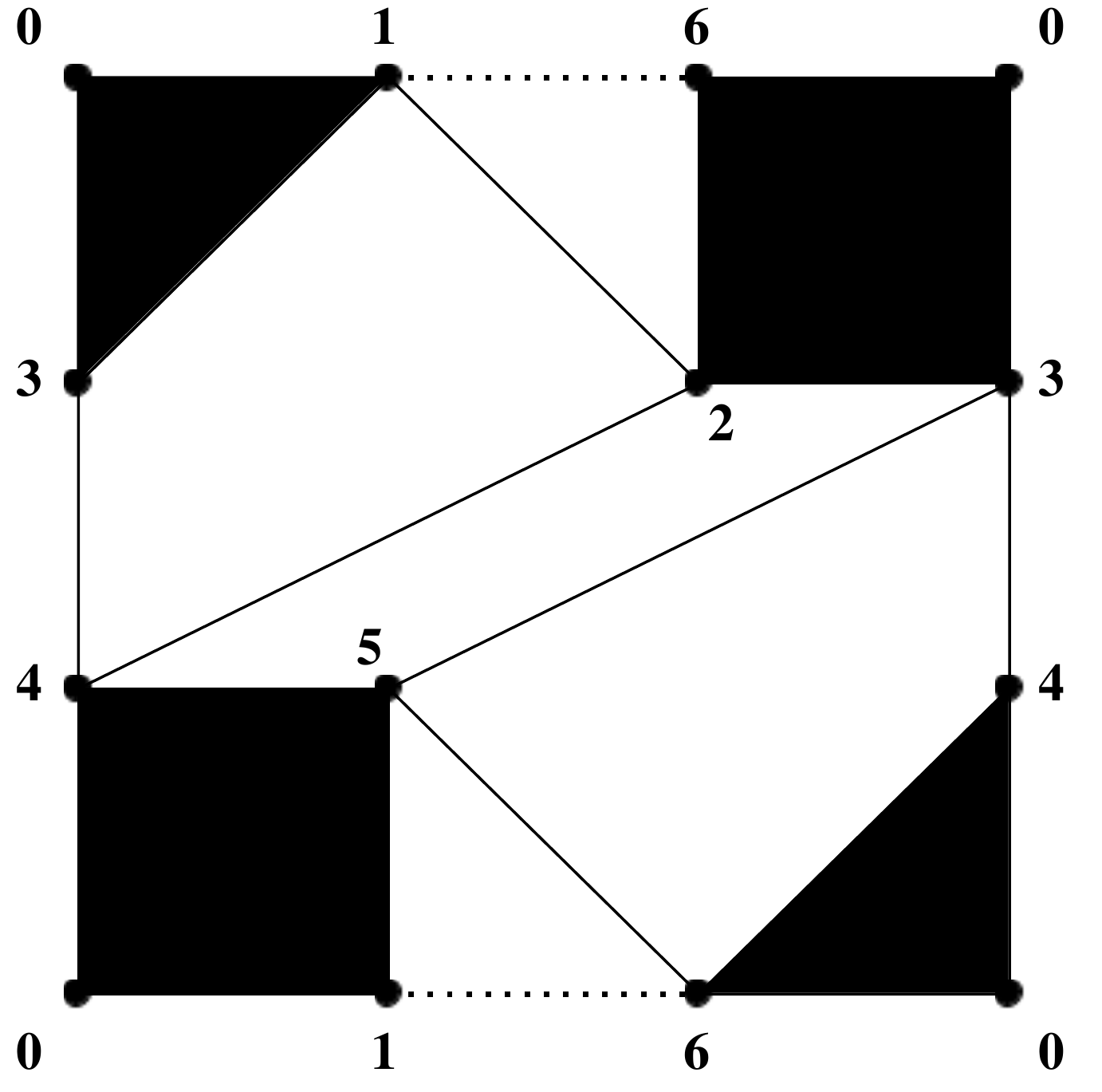}}
\caption{The two rigid tilings of the torus.}
  \label{rigidP}
\end{figure}

The 28 components come in $4$ $G$ orbits, but 3 of these (the 8
dimensional ones) have isomorphic generic fibers. In Figure
\ref{rigidP} I have drawn two tilings of the torus, $P_{1}$ with 1
hexagon (in black), 3 quadrangles and 2 triangles and $P_{2}$ with 1
hexagon and 4 quadrangles. We can associate an embedded rational
projective surface to each polygon. A hexagon corresponds to a Del
Pezzo surface of degree $6$ in $\mathbb{P}^{6}$, a quadrangle
corresponds to $\mathbb{P}^{1} \times \mathbb{P}^{1}$ embedded via the
Segre embedding in $\mathbb{P}^{3}$ and a triangle corresponds to
$\mathbb{P}^{2}$. Now take the union of them in $\mathbb{P}^{6}$,
intersecting as in $P$, to make the degenerate abelian surface
$X_{P}$.
\begin{proposition} The generic fiber over a component of dimension
$7$ is $X_{P_{1}}$ and over a component of dimension $8$ it is
$X_{P_{2}}$.
\end{proposition}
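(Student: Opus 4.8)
The plan is to combine the explicit component ideals from the preceding Macaulay 2 computation with the local formal model \eqref{Up} and the symmetry of $T_7$. Since $\Aut(T_7) = F_{42}$ acts on $X$ and hence on $\Def^a_X$, permuting the non-smoothing components and inducing isomorphisms on fibers over components in the same orbit, it suffices to treat one representative of each $F_{42}$-orbit. The normal subgroup $G = \mathbb{Z}_7$ already accounts for the four orbits of size $7$, while the quotient $\mathbb{Z}_6 = F_{42}/G$ permutes these four orbits; this is exactly what identifies the generic fibers of the three $8$-dimensional orbits. For a chosen representative I would first use Theorem \ref{es} to read off from its ideal the partition of the $21$ edge-coordinates $t_{p,q}$ into those vanishing identically on the component and those generically nonzero.

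Next I would compute the generic fiber chart by chart. A generic point of the component gives generic nonzero values to the surviving $t_{p,q}$ and $0$ to the rest; by the construction of the versal family from the $\mathcal{U}_p$ and by Theorem \ref{aa}, the fiber is \'{e}tale-locally the fiber of \eqref{Up} over these values. In \eqref{Up} an edge $\{p,\tau_k(p)\}$ with $t_{p,\tau_k(p)}$ generic has its node smoothed, merging the two triangles across it, whereas an edge with $t_{p,\tau_k(p)}=0$ survives as a $\mathbb{P}^1$ along which the adjacent pieces continue to meet. Hence the vanishing edges form the $1$-skeleton of a tiling $P$ of the torus, its polygons being the regions bounded by the surviving edges, and the generic fiber is assembled from one surface per polygon. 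I would then verify that the unique $7$-dimensional orbit yields the edge pattern of $P_1$ and each $8$-dimensional orbit that of $P_2$; the triangle counts $6 + 3\cdot 2 + 2 = 14$ and $6 + 4\cdot 2 = 14$, together with the interior-edge counts $9$ and $10$, match the given dimensions $7$ and $8$.

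The substantive step is the polygon-to-surface dictionary. Over a hexagonal cell the six edges meeting at its central vertex $v$ are all smoothed, so the chart $\mathcal{U}_v$ with all six parameters generic glues to the affine cone over the degree-$6$ del Pezzo surface, whose reflexive moment polytope is the hexagon; over a quadrangle a single node between two planes is smoothed, degenerating the union of two copies of $\mathbb{P}^2$ meeting in a line into a smooth quadric $\mathbb{P}^1 \times \mathbb{P}^1 \subset \mathbb{P}^3$; over a triangle nothing is smoothed and the piece stays $\mathbb{P}^2$. Checking that these smoothed cells are exactly the claimed toric surfaces and that they glue along the surviving $\mathbb{P}^1$'s as the polygons meet in $P$ identifies the generic fiber with $X_P$. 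The main obstacle is precisely this geometric identification: the combinatorial edge pattern and the correspondence $7 \leftrightarrow P_1$, $8 \leftrightarrow P_2$ follow routinely from the component ideals, but recognizing each cell as the specific del Pezzo, quadric, or plane, and confirming the global gluing, requires a cell-by-cell toric analysis of \eqref{Up}.
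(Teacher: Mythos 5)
Your overall mechanism — parameters generically nonzero on the component smooth the corresponding edges, identically vanishing parameters survive as double curves, and the generic fiber is assembled from toric surfaces attached to the polygons of the resulting tiling (hexagon $\to$ degree-$6$ del Pezzo, quadrangle $\to$ smooth quadric, triangle $\to$ plane) — is precisely the picture the paper itself sketches, but note that the paper offers it only in a remark, as a belief and a conjecture in general. For $T_7$ the proposition is not proved this way: the component ideals are already known from the Macaulay 2 computation, and the paper then finds the generic fiber by direct computation over each component (``when one has the ideal of the component it is in this case easy to find the generic fiber''). Your $F_{42}$-symmetry reduction and the edge-counting consistency checks are fine (though the match of $9$ and $10$ nonvanishing coordinates with dimensions $7$ and $8$ is only a check once you know, from the preceding proposition, that each component has codimension $2$ in its coordinate subspace, being a cone over the Segre $\mathbb{P}^1\times\mathbb{P}^2$ times an affine space).

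The genuine gap is at the step you yourself flag as the main obstacle, and it cannot be closed by ``cell-by-cell toric analysis of \eqref{Up}.'' The local formal model consists of chart families $\mathcal{U}_p$ whose gluing into the versal family is constructed only order by order over $R/\mathfrak{m}^{n+1}$, and Theorem \ref{aa} transfers only \'{e}tale-local statements about the total space near the central fiber. This is exactly how the paper uses it in Theorem \ref{disc}: reducibility of a chart rules out smooth fibers, an isolated chart singularity plus generic smoothness yields a smoothing — purely local conclusions. Your proposition requires the stronger global statement that the generic fiber is the specific projective scheme $X_{P_1}$ or $X_{P_2}$, i.e. that the smoothed cells are the claimed surfaces \emph{and} glue as in the tiling; the gluing over an actual generic point of the component is simply not part of the data in \eqref{Up}. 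To close the gap you must exhibit the family over the component globally — for instance lift the $21$ cubic generators of $I_X$ modulo the component ideal, exactly as is done over $\mathcal{M}$ in \eqref{fam} — and identify the fiber at a generic parameter value, which is in effect the computation the paper performs. As evidence that the chart-plus-dictionary argument alone is not regarded as a proof, the paper states the general form of your dictionary (components of the non-smoothable fiber are the projective toric varieties of the $\{3,6\}$ lattice polygons) only as a conjecture.
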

\begin{remark} It is probably better to think of the tilings above as
periodic polygonal tilings of the plane with vertices contained in the
lattice of vertices of $\{3, 6\}$, see Figure \ref{rigidT}. Note also that $\Gamma$ is the full
translation group of the tiling. All such tilings can be constructed
by erasing $\Gamma$ orbits of edges in $\{3, 6\}$. I believe that in
general the generic fiber over a non-smoothing component may be
described by erasing the edges corresponding to deformation parameters
that do \emph{not} vanish on the whole component. It is tempting to
conjecture that the components of the non-smoothable fiber are the
projective toric varieties associated to the $\{3,6\}$ lattice
polygons in the tiling.
\end{remark}

\begin{figure}
  \centering
  \subfloat{\includegraphics[width=0.45\textwidth]{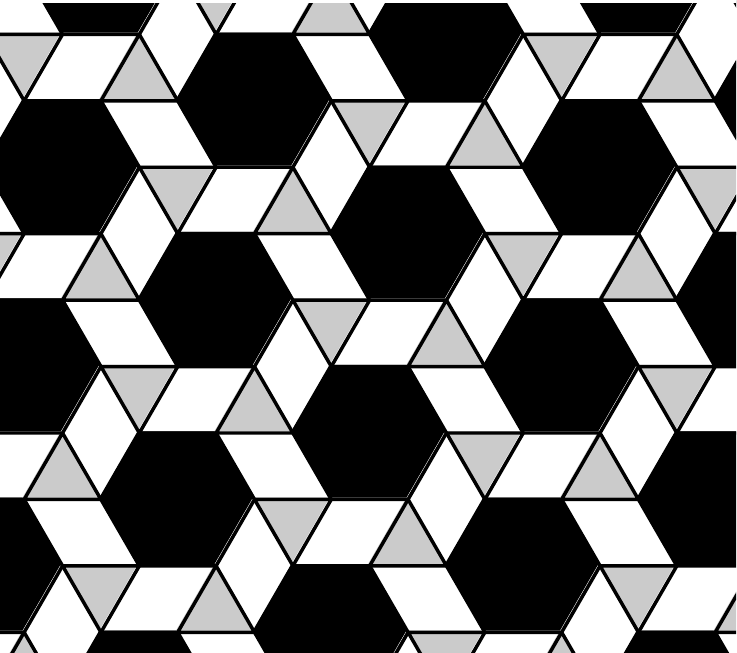}}   \qquad             
  \subfloat{\includegraphics[width=0.45\textwidth]{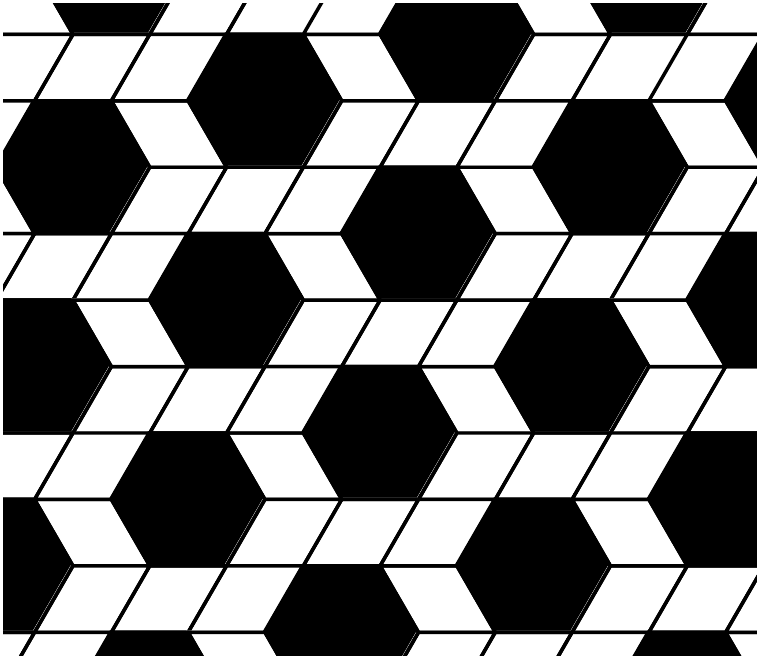}}
\caption{Tilings of the plane that cover the tilings in Figure \ref{rigidP}.}
  \label{rigidT}
\end{figure}

\subsection{The space $\bar{\mathcal{M}}$.} \label{t7mod}
Since the smoothing component $S$ is normal in this case the
$\bar{\mathcal{M}}$ of Theorem \ref{moduli} equals
$\Spec \mathbb{C}[\mathbb{S} \cap \mathbb{Z}^{3}]$. From Example
\ref{cycex} we see that $\bar{\mathcal{M}} \simeq
\mathbb{C}^3/\mathbb{Z}_7$ where $\mathbb{Z}_7$ acts with generator
$\diag(\zeta_{7}^{6}, \zeta_{7}^{5}, \zeta_{7}^{3}) \in \SL_{3}(\mathbb{C})$. One may check
using the criteria of Reid, Shephard-Barron and Tai (see
e.g. \cite{re:can}) that the singularity at the origin is canonical.

\subsection{The equations of a Heisenberg invariant smooth surface} Using this
deformation theory we can find equations for Heisenberg  
invariant abelian surfaces in $\mathbb{P}^{6}$. By Lemma \ref{tktm} we
need to find the family over the smooth subspace $\mathcal{M}$ with
three parameters $u_{k}= t_{p, \tau_{k}(p)}$.

Let us first index the vertices by their $\tau_{1}$ orbit, i.e. after
fixing a vertex $\{0\}$, $\{m\} = \{m\tau_{1}(0)\}$. Thus in cycle
notation $\tau_{1} = (0,1,2,3,4,5,6)$. Then $\tau_{2}$ becomes $(0, 4,
1, 5, 2, 6, 3 )$ and $\tau_{3}$ is $(0, 2, 4, 6, 1, 3, 5)$. The group
$\Aut(T) = F_{42}$ is generated by $\tau_{1}$ and the rotation $\rho =
(1,5,4,6,2,3)$. Note that $\rho$ acts on $\mathcal{M}$ as the
permutation $(u_{1}, u_{3}, u_{2})$.

The 21 cubic monomials generating $I_{X}$ are one $F_{42}$ orbit, but
it is convenient to partition them in 3 $\tau_{1}$ orbits since they
have the nice form $$ x_{-\tau_{k}(p)}x_{p}x_{\tau_{k}(p)} \quad
k=1,2,3 \quad p \in \vertices T \, .$$ Note again that $\rho$ permutes
these 3 orbits.

The first order deformations are easily found and the first order
family is defined by the $\tau_{1}$ orbits of $$ x_{0}x_{1}x_{6} +
u_{2}(x_{1}^{2}x_{5}+ x_{2}x_{6}^{2}), x_{0}x_{3}x_{4} +
u_{3}(x_{4}^{2}x_{6}+ x_{1}x_{3}^{2}), x_{0}x_{2}x_{5} +
u_{1}(x_{2}^{2}x_{3}+ x_{4}x_{5}^{2}) \, .$$ Instead of lifting
equations and relations to continue apply the symmetry.

The ideal must be Heisenberg invariant. In our case the action of
$G^{\ast}$ is generated by $x_{m} \mapsto \zeta_{7}^{m}x_{m}$. Thus if
$x_{m_{1}}x_{m_{2}}x_{m_{3}}$ is a term in the perturbation of
$x_{-\tau_{k}(m)}x_{m}x_{\tau_{k}(m)}$ we must have $$ m_{1} + m_{2} +
m_{3} \equiv -\tau_{k}(m)+ m + \tau_{k}(m) \mod 7 \, .$$ Moreover
$\rho^{3} = (1,6)(2,5)(3,4)$ fixes each
$x_{-\tau_{k}(m)}x_{m}x_{\tau_{k}(m)}$, so terms in $\rho^{3}$ orbits
in the perturbation must have the same coefficient. Finally we may
exclude terms that are in $I_{X}$. The upshot is that the family is
defined by the orbits of
\begin{equation}
\begin{split} \label{fam} x_{0}x_{1}x_{6} &+ u_{2}(x_{1}^{2}x_{5}+
x_{2}x_{6}^{2}) + \psi_{1}(x_{1}x_{2}x_{4}+x_{3}x_{5}x_{6}) \\ &+
\varphi_{1}x_{0}^{3}+\xi_{1}(x_{1}x_{3}^{2}+ x_{4}^{2}x_{6}) +
\upsilon_{1}(x_{2}^{2}x_{3}+ x_{4}x_{5}^{2})\\ x_{0}x_{3}x_{4} &+
u_{3}(x_{4}^{2}x_{6}+ x_{1}x_{3}^{2}) +
\psi_{2}(x_{1}x_{2}x_{4}+x_{3}x_{5}x_{6}) \\ &+ \varphi_{2}x_{0}^{3} +
\xi_{2}(x_{4}x_{5}^{2}+ x_{2}^{2}x_{3}) + \upsilon_{2}(x_{1}^{2}x_{5}+
x_{2}x_{6}^{2})\\ x_{0}x_{2}x_{5} &+ u_{1}(x_{2}^{2}x_{3}+
x_{4}x_{5}^{2}) + \psi_{3}(x_{1}x_{2}x_{4}+x_{3}x_{5}x_{6}) \\ &+
\varphi_{3}x_{0}^{3} + \xi_{3}(x_{2}x_{6}^{2}+ x_{1}^{2}x_{5}) +
\upsilon_{3}(x_{4}^{2}x_{6}+ x_{1}x_{3}^{2})
\end{split}
\end{equation} where the $\xi_{i}, \upsilon_{i}, \varphi_{i},
\psi_{i}$ are power series in $u_{1}, u_{2}, u_{3}$. Thus the task
becomes to find similar expressions for lifted relations and then
solving functional equations to make the family flat.

I describe here the answer only for the one parameter deformation $s =
u_{1} = u_{2} = u_{3}$. In this case also $\psi_{1} = \psi_{2} =
\psi_{3}$ etc.\ so denote the common function by $\psi$. Let $f(s)$ be
a power series solution for the equation $$
{s}^{6}{f(s)}^{4}-{s}^{4}(s+1){f(s)}^{3}-(s+1)^{2}(s-1)f(s)+(s +1)^{2}
= 0\, .$$
\begin{proposition} The family defined by the orbits of equations
(\ref{fam}) form a flat one parameter smoothing if $s = u_{1} = u_{2}
= u_{3}$ and
$$
\varphi = -\psi = \frac{s^{2}(s^{4}f(s)^{3}-s-1)}{1+s}, \quad \xi =
s^{2}f(s), \quad \upsilon = \frac{s^{4}f(s)^{2}}{1+s} \, .$$
\end{proposition}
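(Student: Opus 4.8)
The plan is to treat the two claims—that the family is a \emph{smoothing} and that it is \emph{flat} exactly for the stated coefficients—separately, with essentially all the work in the flatness. The smoothing assertion is almost free: the locus $s = u_1 = u_2 = u_3$ is the curve $C \subset \mathcal{M} \subseteq S$ from the proof of Theorem~\ref{disc} with $a=b=c=1$, so $abc = 1 \neq 0$. Thus, once flatness is known, our family is by versality of $\Def^a_X$ induced from $C$ (its first-order term being the tangent direction of $C$), and Theorem~\ref{disc} shows its generic fiber is a smooth abelian surface. Everything therefore reduces to establishing flatness and identifying $\varphi,\psi,\xi,\upsilon,f$.

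For flatness I would use the relation-lifting criterion over $\mathbb{C}[[s]]$. Write $m_1,\dots,m_{21}$ for the monomials $x_{-\tau_k(p)}x_p x_{\tau_k(p)}$ generating $I_X$ and $\tilde m_1,\dots,\tilde m_{21}$ for their perturbations (the $\tau_1$-orbits of the three polynomials in \eqref{fam}). With $P = \mathbb{C}[x_0,\dots,x_6]$ and $\tilde I = (\tilde m_1,\dots,\tilde m_{21})$, the family $P[[s]]/\tilde I$ is flat if and only if every first syzygy $\sum_i r_i m_i = 0$ of the monomial ideal lifts to $\sum_i R_i \tilde m_i = 0$ with $R_i \equiv r_i \bmod s$. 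First I would compute a generating set of syzygies of $I_X$ and, exploiting that the $21$ generators form one $F_{42}$-orbit and that $\rho^3 = (1,6)(2,5)(3,4)$ fixes each $m_i$, cut this down to a few orbit representatives. For each representative one multiplies the $\tilde m_i$ by the monomial cofactors prescribed by the monomial syzygy, forces the $s^0$-terms to cancel as in $I_X$, and requires the residual terms to lie in $\tilde I$; expanding that membership in powers of $s$ turns each representative into polynomial identities in $\psi,\varphi,\xi,\upsilon$ over $\mathbb{C}[s]$.

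Solving this system is the crux, and I expect it to collapse dramatically. The linear identities should force $\varphi=-\psi$ and express $\psi$ and $\upsilon$ through $\xi$ and $s$, leaving one nonlinear compatibility; substituting $\xi = s^2 f$ should convert it into precisely the quartic $s^6 f^4 - s^4(s+1)f^3 - (s+1)^2(s-1)f + (s+1)^2 = 0$. Conversely, any power-series root $f$ together with the displayed formulas for $\varphi,\psi,\upsilon$ satisfies all lifting identities, so all syzygies lift and the family is flat. Such an $f$ exists and is unique with $f(0) = -1$: at $s=0$ the quartic reads $f+1 = 0$ and its $f$-derivative there equals $1 \neq 0$, so the implicit function theorem for power series applies; one then checks that the resulting $\varphi,\psi,\xi,\upsilon$ all vanish to order $s^2$, consistent with the first-order family in \eqref{fam}.

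The main obstacle is this middle computation: organizing the syzygies of the twenty-one cubics and performing the $s$-adic expansion of their lifts while keeping the $F_{42}$-symmetry under control, then verifying that the entire system reduces to the single quartic in $f$. This is a finite but delicate elimination, which in practice I would cross-check with the computer-algebra tools (Macaulay2) used elsewhere in the paper.
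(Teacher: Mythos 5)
Your proposal is correct and takes essentially the same route as the paper: the paper's proof likewise establishes flatness by lifting the syzygies of $I_{X}$ to relations of the perturbed ideal, uses the symmetry to cut the verification down to two relations, and solves the resulting functional equations by computer algebra (Maple rather than Macaulay2), with the smoothing claim resting on Theorem \ref{disc} via the curve $u_{1}=u_{2}=u_{3}$ exactly as you argue. You even supply details the paper leaves implicit, notably the existence and uniqueness of the power-series root $f$ with $f(0)=-1$ via the implicit function theorem and the consistency check that the remaining coefficients vanish to order $s^{2}$.
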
 Because of the symmetry only 2 relations need to be
lifted. I computed the liftings using Maple.

 \bibliographystyle{amsalpha}

\providecommand{\bysame}{\leavevmode\hbox to3em{\hrulefill}\thinspace}
\providecommand{\MR}{\relax\ifhmode\unskip\space\fi MR }
\providecommand{\MRhref}[2]{%
  \href{http://www.ams.org/mathscinet-getitem?mr=#1}{#2}
}
\providecommand{\href}[2]{#2}

\end{document}